\newtheorem{theorem}{Theorem}[section]
\newtheorem{proposition}[theorem]{Proposition}
\newtheorem{lemma}[theorem]{Lemma}
\newtheorem{corollary}[theorem]{Corollary}
\theoremstyle{definition}
\newtheorem{definition}[theorem]{Definition}
\newtheorem{example}[theorem]{Example}
\newtheorem{remark}[theorem]{Remark}
\newcommand{\lra}{\longrightarrow}
\newcommand{\CP}{\ensuremath{\mathbb{C}P^{\infty}}}
\newcommand{\dr}[3]{\ensuremath{#1\stackrel{#2}
{\longrightarrow}#3}}
\newcommand{\cpinf}{\ensuremath{\mathbb{C}P^{\infty}}}
\newcommand{\djk}{\ensuremath{DJ_{K}}}
\newcommand{\zk}{\ensuremath{\mathcal{Z}_{K}}}
\newcommand{\djks}{\ensuremath{DJ_{K}(\underline{S})}}
\newcommand{\zks}{\ensuremath{\mathcal{Z}_{K}(\underline{S})}}
\newcommand{\djkx}{\ensuremath{DJ_{K}(\underline{X})}}
\newcommand{\zkx}{\ensuremath{\mathcal{Z}_{K}(\underline{X})}}
\newcommand{\djkstwo}{\ensuremath{DJ_{K}(S^{2})}}
\newcommand{\zkstwo}{\ensuremath{\mathcal{Z}_{K}(\mathcal{S}^{2})}}
\newcommand{\djky}{\ensuremath{DJ_{K}(\underline{\Sigma Y})}}
\newcommand{\zky}{\ensuremath{\mathcal{Z}_{K}(\underline{\Sigma Y})}}
\newcommand{\colim}{\ensuremath{\mbox{colim}}}
\newcommand{\hlgy}[1]{\ensuremath{H_{*}(#1)}}
\newcommand{\qhlgy}[1]{\ensuremath{H_{*}(#1;\mathbb{Q})}}
\newcommand{\rqhlgy}[1]{\ensuremath{\widetilde{H}_{*}(#1;\mathbb{Q})}}
\newcommand{\cohlgy}[1]{\ensuremath{H^{*}(#1)}}
\newcounter{bean}
\newenvironment{letterlist}{\begin{list}{\rm ({\alph{bean}})}
      {\usecounter{bean}\setlength{\rightmargin}{\leftmargin}}}
      {\end{list}}
\newcommand{\namedright}[3]{\ensuremath{#1\stackrel{#2}
 {\longrightarrow}#3}}
\newcommand{\nameddright}[5]{\ensuremath{#1\stackrel{#2}
 {\longrightarrow}#3\stackrel{#4}{\longrightarrow}#5}}
\newcommand{\namedddright}[7]{\ensuremath{#1\stackrel{#2}
 {\longrightarrow}#3\stackrel{#4}{\longrightarrow}#5
  \stackrel{#6}{\longrightarrow}#7}}
\newcommand{\larrow}{\relbar\!\!\relbar\!\!\rightarrow}
\newcommand{\llarrow}{\relbar\!\!\relbar\!\!\larrow}
\newcommand{\lllarrow}{\relbar\!\!\relbar\!\!\llarrow}
\newcommand{\llnamedright}[3]{\ensuremath{#1\stackrel{#2}
 {\llarrow}#3}}
\newcommand{\lllnamedright}[3]{\ensuremath{#1\stackrel{#2}
 {\lllarrow}#3}}
\newcommand{\lllnamedddright}[7]{\ensuremath{#1\stackrel{#2}
 {\lllarrow}#3\stackrel{#4}{\lllarrow}#5
  \stackrel{#6}{\lllarrow}#7}}
\newcommand{\qqed}{\hfill\Box}
\begin{document}
\title{Higher Whitehead products in toric topology}
\author{Jelena Grbi\'{c}}
\address{School of Mathematics, University of Manchester,
         Manchester M13 9PL, United Kingdom} 
\address{\textit{Current address}: School of Mathematics, University of Southampton, 
         Southampton SO17 1BJ} 
\email{J.Grbic@soton.ac.uk}
\author{Stephen Theriault}
\address{Institute of Mathematics,
         University of Aberdeen, Aberdeen AB24 3UE, United Kingdom} 
\address{\textit{Current address}: School of Mathematics, University of Southampton, 
         Southampton SO17 1BJ} 
\email{S.D.Theriault@soton.ac.uk}

\subjclass[2010]{Primary 55P35, 55Q15, Secondary 13F55, 52C35.}
\date{}
\keywords{Davis-Januszkiewicz space, moment-angle complex,
   homotopy type, higher Whitehead product, higher Samelson product}

\begin{abstract} 
In toric topology, to each simplicial complex $K$ on $n$ vertices one 
associates two key spaces, the Davis-Januskiewicz space $DJ_{K}$ and 
the moment-angle complex $\zk$, which are related by a homotopy fibration 
\(\nameddright{\zk}{\widetilde{w}}{DJ_{K}}{}{\prod_{i=1}^{n}\cpinf}\). 
A great deal of work has been done to study properties of the two spaces 
$DJ_{K}$ and $\zk$. However, very little is known about the map $\widetilde{w}$. 
In this paper we show that, for a certain family of simplicial complexes $K$,  
the map $\widetilde{w}$ is a sum of higher and iterated Whitehead products. 
\end{abstract} 

\maketitle

\section{Introduction} 
To a simplicial complex $K$ on $n$ vertices, Davis and
Januszkiewicz~\cite{DJ} associated two fundamental objects of toric
topology: the moment-angle complex $\zk$ and the Davis-Januszkiewicz
space~$\djk$, whose study connects algebraic geometry, topology,
combinatorics, and commutative algebra. Algebraic topologists, on
their side, have tried to understand the homotopy theory of these
spaces and other related topological spaces with a torus action.
Recent developments~\cite{BP} have shown that from the homotopy
theoretical point of view both spaces $\zk$ and $\djk$ could be
considered as polyhedral products of the topological pairs
$(D^2,S^1)$ and $(\CP,*)$, respectively. That lead to a wide
generalisation~\cite{BP2,S} to polyhedral products
$(\underline{X,A})^K$ of $n$-tuples of topological pairs
$(\underline{X,A})=(X_i,A_i)_{i=1}^{n}$. These spaces are also very 
closely related to combinatorics and commutative algebra, and for 
particular topological pairs, to algebraic geometry and convex geometry. 

To date, there has been considerable success in studying the
homotopy type of $\zk$~\cite{GT1,GT2,IK} or its suspension~\cite{BBCG}.
However, no attempt has been made to study the map
\(\namedright{\zk}{}{\djk}\). This means that the interesting
information that is known about the moment-angle complex \zk\ cannot
be related to the Davis-Januszkiewicz space \djk. The purpose of
this paper is to remedy this deficiency. We show that for a certain
family of simplicial complexes $K$, \zk\ is homotopy equivalent to a
wedge of spheres and the homotopy equivalence may be chosen so that
the map \(\namedright{\zk}{}{\djk}\) consists of a specified
collection of higher Whitehead products and iterated Whitehead
products. In particular, each missing face of $K$ corresponds to
the existence of a nontrivial higher Whitehead product whose adjoint
has a nonzero Hurewicz image in $\qhlgy{\Omega\djk}$. 

Let $X_{1},\ldots,X_{n}$ be path-connected spaces and let
$\underline{X}=\{X_{1},\ldots,X_{n}\}$. Following~\cite{BP2, BBCG},
for $\sigma=(i_1,\ldots,i_k)$ let $X^\sigma=\prod_{j=1}^{k}
X_{i_j}$, and let $\djkx=\bigcup_{\sigma\in K}X^\sigma$. Notice that
there is an inclusion $\dr{\djk(\underline{X})}{}{\prod_{i=1}^{n}
X_{i}}$. Define $\zkx$ by the homotopy fibration
\begin{equation}
  \label{DJfib}
  \nameddright{\zk(\underline{X})}{}{\djk(\underline{X})}{}
    {\prod_{i=1}^{n} X_{i}}.
\end{equation}
If $X_{1},\ldots,X_{n}$ all equal a common space $X$, we instead write
$\zk(X)$ and $\djk(X)$. When $X_{i}=\cpinf$ for each $1\leq i\leq n$, 
the homotopy fibration~(\ref{DJfib}) specializes to the case of
primary interest in toric topology, that is, to the homotopy
fibration $\nameddright{\zk}{}{\djk}{}{\prod_{i=1}^{n}\cpinf}$. For
the sake of clarity, we remark that in the terms of polyhedral
products our $\zk(\underline{X})$ is actually 
$(\underline{\mathrm{Cone}\,\Omega X,\Omega X})^K$, 
whereas $\djk(\underline{X})=(\underline{X,*})^K$. 

These polyhedral products currently enjoy great popularity;
in particular their loop homology with various coefficients and for
different families of simplicial complexes has been calculated. Some
simple but important examples of the homology of $\Omega\djkx$ were
calculated by Lemaire~\cite{L} in 1974 before the notion of $\zkx$
and $\djkx$ were introduced. Panov and Ray~\cite{PR} introduced
categorical formalism to study the loop homology of \djk\ and gave
explicit calculations when $K$ is a flag complex.
Dobrinskaya~\cite{D} has a general approach for calculating the
homology of $\Omega\djk(\underline{X})$ for an arbitrary simplicial
complex $K$ in terms of the homology of $\Omega(\underline{X})$ and
some special relations coming from the homology of $\Omega\djk(S^2)$.  
However, the homology of $\Omega\zk(\underline{X})$ remains a mystery. 

In this paper we first consider the case when each $X_{i}$ is a
sphere, writing $\underline{S}=(S^{m_{1}+1},\ldots,S^{m_{n}+1})$. 
As an intermediate goal towards understanding the map
$\zk\lra\djk$ we need to calculate the rational homology of
$\Omega\djks$ and $\Omega\djk$. However, it is important to
emphasize that we do this in such a way as to remember the geometry
of the space, that is, in such a way as to keep track of specific
Hurewicz images. The existing models for rational loop homology are
not known to do this, so we have to produce our own model which
does. The methods we use lend themselves well to concrete calculations, 
and we include some examples to illustrate this. 

In what follows $K$ will be a simplicial complex on $n$ vertices, 
labelled $\{1,\ldots,n\}$. That is, a simplex $\sigma\in K$ corresponds to 
a sequence $(i_{1},\ldots,i_{k})$ where $1\leq i_{1}<\cdots<i_{k}\leq n$ and
the integers $i_{j}$ are the vertices of $K$ which are in $\sigma$.
Let $\vert\sigma\vert=k-1$ be the dimension of $\sigma$. We
concentrate on the collection $MF(K)$ of \emph{missing faces}. To be
precise, a sequence $(i_{1},\ldots,i_{k})$ is in $MF(K)$ if: (i)
$(i_{1},\ldots,i_{k})\notin K$, and (ii) every proper subsequence of
$(i_{1},\ldots,i_{k})$ is in $K$. For example, let $K$ be the
simplicial complex on $4$ vertices 
\begin{equation} 
  \label{4vexample} 
  \begin{tikzpicture} 
     \draw (0,0)--(1,1)--(2,0)--(1,-1)--(0,0); 
     \draw (1,1)--(1,-1); 
     \node [above] at (1,1) {\scriptsize 1}; 
     \node [right] at (2,0) {\scriptsize 3}; 
     \node [below] at (1,-1) {\scriptsize 2}; 
     \node [left] at (0,0) {\scriptsize 4}; 
  \end{tikzpicture} 
\end{equation} 
Then $MF(K)=\{(3,4),(1,2,3),(1,2,4)\}$.

\begin{definition}
Let $K$ be a simplicial complex on $n$ vertices. We say that $K$
is an \emph{$MF$-complex} if
\[\vert K\vert=\bigcup_{\sigma\in MF(K)}\vert\partial\sigma\vert\]
where $\vert K\vert$ and $\vert\partial\sigma\vert$ denote the
geometrical realisations of $K$ and $\partial\sigma$, respectively.
\end{definition} 

A simple example of a non-$MF$-complex is the boundary of a square. 

The class of $MF$-complexes is larger than we want for producing 
wedge decompositions of \zks\ and \zk. We will give an example in 
Section~\ref{sec:objects} of an $MF$-complex $K$ 
with the property that $\zk$ has nontrivial cup-products. We wish 
to avoid this, so we add another condition which restricts how the 
faces of $K$ can be assembled. 

\begin{definition} 
Let $K$ be an $MF$-complex on $n$ vertices. We say that $K$ is a  
\emph{directed $MF$-complex} if there is a sequence of subcomplexes 
$\emptyset\subseteq K_{1}\subseteq\cdots\subseteq K_{l}=K$ 
for some $l$, where $K_{i}=K_{i-1}\cup\partial\sigma_{i}$ for 
$\sigma_{i}\in MF(K)$ and $K_{i-1}\cap\sigma_{i}$ is a face common 
to $K_{i-1}$ and $\sigma_{i}$. 
\end{definition} 

Observe that the simplicial complex~(\ref{4vexample}) is a directed 
$MF$-complex, but if the edge $(3,4)$ is also added (giving the 
$1$-skeleton of a tetrahedron) then the resulting simplicial complex 
is an $MF$-complex but not a directed $MF$-complex. More 
examples are given in Section~\ref{sec:objects}. 

Directed $MF$-complexes have the property that \zk\ decomposes 
as a wedge of spheres. In particular, all cup products and higher Massey 
products in $\cohlgy{\zk}$ are zero. We obtain this as a special case of a 
property for more general spaces. 

\begin{theorem}
   \label{Mcomplexwedge}
   Let $K$ be a directed $MF$-complex on $n$ vertices. Let
   $\underline{X}=\{X_{1},\ldots,X_{n}\}$ where each $X_{i}$ is
   a path-connected topological space. Then $\zk(\underline{X})$ is
   homotopy equivalent to a wedge of spaces of the form
   $\Sigma^{t}\Omega X_{i_{1}}\wedge\cdots\wedge\Omega X_{i_{k}}$
   for various $1\leq t<n$ and sequences $(i_{1},\ldots,i_{k})$
   where ${1\leq i_{1}<\cdots<i_{k}\leq n}$.
\end{theorem}

\begin{corollary}
   \label{integralM}
   Let $K$ be a directed $MF$-complex on $n$ vertices. Then each of
   \zks\ and \zk\ is homotopy equivalent to a wedge of
   simply-connected spheres.
\end{corollary}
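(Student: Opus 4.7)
The plan is to invoke Theorem~\ref{Mcomplexwedge} and recognize each wedge summand as a wedge of spheres via the James splitting $\Sigma\Omega S^{m+1}\simeq\bigvee_{j\geq 1}S^{jm+1}$ together with the elementary identity $A\wedge(\bigvee_i B_i)\simeq\bigvee_i(A\wedge B_i)$.

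First I would dispose of $\zk$, where $X_i=\cpinf$. Since $\Omega\cpinf\simeq S^1$, a typical summand $\Sigma^t\Omega X_{i_1}\wedge\cdots\wedge\Omega X_{i_k}$ provided by Theorem~\ref{Mcomplexwedge} collapses to $\Sigma^t(S^1)^{\wedge k}\simeq S^{t+k}$, which is immediately a sphere of dimension at least $2$ since $t\geq 1$ and $k\geq 1$.

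For $\zks$, consider a typical summand $\Sigma^t\Omega S^{m_{i_1}+1}\wedge\cdots\wedge\Omega S^{m_{i_k}+1}$. Using $t\geq 1$, I would absorb one suspension onto the first loop factor, apply James to $\Sigma\Omega S^{m_{i_1}+1}$ to replace it by a wedge of spheres, and then distribute smash over the resulting wedge. Each resulting piece has the form $\Sigma^{a}\Omega S^{m_{i_2}+1}\wedge\cdots\wedge\Omega S^{m_{i_k}+1}$ for some integer $a\geq 1$, so the same procedure can be iterated on $\Omega S^{m_{i_2}+1}$, then on $\Omega S^{m_{i_3}+1}$, and so on. After $k$ steps no loop factors remain and the summand is a wedge of spheres of dimensions of the form $t + j_1 m_{i_1}+\cdots+j_k m_{i_k}$ with $j_l\geq 1$; these are all at least $2$ in the standing setting where each $S^{m_i+1}$ is simply connected (i.e.\ $m_i\geq 1$).

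The only substantive ingredient is the James splitting, which is classical; everything else is bookkeeping of smash-and-wedge. I do not anticipate any conceptual obstacle, and in fact the corollary should follow from Theorem~\ref{Mcomplexwedge} in only a few lines.
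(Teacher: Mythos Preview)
Your proposal is correct and follows essentially the same argument as the paper: invoke Theorem~\ref{Mcomplexwedge}, use $\Omega\cpinf\simeq S^{1}$ for \zk, and apply the James splitting $\Sigma\Omega S^{m+1}\simeq\bigvee_{j\geq 1}S^{jm+1}$ iteratively (using that each resulting summand is again a suspension) for \zks. The only cosmetic difference is that the paper treats \zks\ before \zk.
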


Next, as an intermediate step, we calculate $\qhlgy{\Omega\djks}$
and $\qhlgy{\Omega\djk}$ using an Adams-Hilton model, with the
emphasis on detecting Hurewicz images. To state this we introduce
some notation. If $V$ is a graded $\mathbb{Q}$-vector space, let
$L\langle V\rangle$ be the free graded Lie algebra generated by~$V$,
and let $UL\langle V\rangle$ be its universal enveloping algebra. If $V$ 
has basis $\{v_{1},\ldots,v_{n}\}$ let $L_{ds}\langle v_{1},\ldots,v_{n}\rangle$ 
be the direct sum $\oplus_{i=1}^{n} L\langle v_{i}\rangle$. In particular, 
in $L_{ds}\langle v_{1},\ldots,v_{n}\rangle$ we have $[v_{i},v_{j}]=0$ 
if $i\neq j$. Notice that if $v_{i}$ is of even degree then $[v_{i},v_{i}]=0$ 
but this is not the case if $v_{i}$ is of even degree. On the other hand, 
if $v_{i}$ is of even degree then in $UL\langle v_{i}\rangle$ we have 
$[v_{i},v_{i}]=2v_{i}^{2}$, so for $v_{i}$ of any parity we have 
$UL\langle v_{i}\rangle\cong\mathbb{Q}[v_{i}]$. Thus 
$UL_{ds}\langle v_{1},\ldots,v_{n}\rangle\cong\otimes_{i=1}^{n} UL\langle v_{i}\rangle$. 
If $L$ is a Lie algebra and $x_{1},\ldots,x_{k}\in L$, let
$[[x_{1},x_{2}],\ldots,x_{k}]$ denote the iterated bracket
$[\ldots[[x_{1},x_{2}],x_{3}],\ldots,x_{k}]$.

Let $b_{i}$ be the Hurewicz image of the adjoint of the coordinate
inclusion \(\namedright{S^{m_{i}+1}}{}{\djks}\). Abusing notation,
let $b_{i}$ also be the Hurewicz image of the adjoint of the
composite \(\nameddright{S^{2}}{}{\cpinf}{}{\djk}\), where the left
map is the inclusion of the bottom cell and the right map is the
inclusion of the $i^{th}$-coordinate. By $u_\sigma$ we denote the
Hurewicz image of the adjoint of the Whitehead product corresponding
to a missing face $\sigma\in MF(K)$. We will phrase
$\qhlgy{\Omega\djks}$ and $\qhlgy{\Omega\djk}$ as quotients of
$U(L_{ds}\langle b_{1},\ldots,b_{n}\rangle\textstyle\coprod
   L\langle u_{\sigma}\mid\sigma\in MF(K)\rangle)$.
A distinction needs to be made between the elements~$u_{\sigma}$
where $\vert\sigma\vert=1$ and $\vert\sigma\vert>1$. The latter
elements are independent from $b_{1},\ldots,b_{n}$. On the other
hand, if $\vert\sigma\vert=1$ then $\sigma=(i_{1},i_{2})$ and
$u_{\sigma}=[b_{i_{1}},b_{i_{2}}]$, which is not independent from
$b_{1},\ldots,b_{n}$. This leads to additional relations determined
by the graded Jacobi identity and face relations. Specifically, we
have $[u_{\sigma},b_{j}]=[[b_{i_{1}},b_{i_{2}}],b_{j}]=
    [b_{i_{1}},[b_{i_{2}},b_{j}]]-(-1)^{\vert b_{i_{1}}\vert\vert b_{i_{2}}\vert}
    [b_{i_{2}},[b_{i_{1}},b_{j}]]$
and if $(i_{1},j)\in K$ or $(i_{2},j)\in K$ then $[b_{i_{1}},b_{j}]=0$ 
or $[b_{i_{2}},b_{j}]=0$ repsectively. The collection of such
relations forms an ideal in $U(L_{ds}\langle
b_{1},\ldots,b_{n}\rangle\textstyle\coprod
   L\langle u_{\sigma}\mid\sigma\in MF(K)\rangle)$
which we label as $J$. Note that if every missing face $\sigma\in MF(K)$
is of dimension greater than $1$, then $J$ is trivial.

\begin{theorem}
   \label{ULcolim}
   Let $K$ be a directed $MF$-complex. There is an algebra isomorphism
   \[\qhlgy{\Omega\djks}\cong U(L_{ds}\langle b_{1},\ldots,b_{n}\rangle
         \textstyle\coprod L\langle u_{\sigma}\mid\sigma\in MF(K)\rangle)/J\]
   where each $u_{\sigma}$ is the Hurewicz image of the adjoint of a
   higher Whitehead product. Further, the loop map 
   \(\namedright{\Omega\djks}{}{\prod_{i=1}^{n}\Omega S^{m_{i}+1}}\)
   is modelled by the map
   \[\namedright{U(L_{ds}\langle b_{1},\ldots,b_{n}\rangle\textstyle\coprod
        L\langle u_{\sigma}\mid\sigma\in MF(K)\rangle)/J}
       {U(\pi)}{UL_{ds}\langle b_{1},\ldots,b_{n}\rangle}\]
   where $\pi$ is the projection.
\end{theorem}

Let
\(\imath\colon\namedright{S^{2}}{}{\cpinf}\) 
be the inclusion of the bottom cell. For any simplicial complex $K$, 
this induces a map 
\(DJ_{k}(\iota)\colon\namedright{\djk(S^{2})}{}{\djk}\). 

\begin{theorem}
   \label{hlgyloopdjk}
   Let $K$ be a directed $MF$-complex. There is an algebra isomorphism
   \[\qhlgy{\Omega\djk}\cong
        U(L_{ds}\langle b_{1},\ldots,b_{n}\rangle\textstyle\coprod
        L\langle u_{\sigma}\mid\sigma\in MF(K)\rangle)/(I+J)\]
   where $u_{\sigma}$ is the Hurewicz image of the adjoint of a
   higher Whitehead product and $I$ is the ideal
   \[I=(b_{i}^{2},[u_{\sigma},b_{j_{\sigma}}]\mid
       1\leq i\leq n,\sigma=(i_{1},\ldots,i_{k})\in MF(K),
       j_{\sigma}\in\{i_{1},\ldots,i_{k}\}).\]
   Further, there is a commutative diagram of algebras
   \[\diagram
        \qhlgy{\Omega\djkstwo}\rto^-{\cong}\dto^{(\Omega\djk(\imath))_{\ast}}
            & U(L_{ds}\langle b_{1},\ldots,b_{n}\rangle\textstyle\coprod
                L\langle u_{\sigma}\mid\sigma\in MF(K)\rangle)/J\dto^{q} \\
        \qhlgy{\Omega\djk}\rto^-{\cong}
            & U(L_{ds}\langle b_{1},\ldots,b_{n}\rangle\textstyle\coprod
                L\langle u_{\sigma}\mid\sigma\in MF(K)\rangle)/(I+J)
     \enddiagram\]
   where $q$ is the quotient map.
\end{theorem}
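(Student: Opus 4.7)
The plan is to derive this from Theorem~\ref{ULcolim} by studying the map $\djk(\imath)\colon\djkstwo\to\djk$ induced by the bottom-cell inclusion $\imath\colon S^2\hookrightarrow\cpinf$ in each coordinate of the polyhedral product. The commutative diagram in the statement displays this map on rational loop homology, and the full theorem reduces to identifying $(\Omega\djk(\imath))_\ast$ with the quotient~$q$.

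The first task is to check that the two types of generators of $I$ are killed by $(\Omega\djk(\imath))_\ast$. For $b_i^2$: since $\Omega\cpinf\simeq S^1$ we have $\qhlgy{\Omega\cpinf}=\Lambda(b)$ with $b^2=0$, while $\qhlgy{\Omega S^2}=\mathbb{Q}[b]$, and $b_i$ is by construction the image of $b$ under the $i$-th coordinate inclusion into $\djk$, so $b_i^2=0$ follows from naturality. For $[u_\sigma,b_{j_\sigma}]$ with $j_\sigma\in\sigma$: the element $u_\sigma$ is the Hurewicz image of the adjoint of a higher Whitehead product $W_\sigma$, and the bracket in question is the Hurewicz image of a Samelson product whose adjoint is the Whitehead product $[W_\sigma,\iota_{j_\sigma}]$. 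Because $j_\sigma$ already occurs as a factor of $W_\sigma$, this expression is built, via graded skew-symmetry and the Jacobi identity for higher Whitehead products, out of $[\iota,\iota]\in\pi_3(\cpinf)$, which vanishes since $\cpinf$ is an H-space. Since $u_\sigma$ for $|\sigma|\geq 2$ is treated as an independent generator (not a Lie word in the $b_i$'s), this relation does not follow formally from $b_i^2=0$ together with $J$, which is why it must appear separately in $I$.

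The third step, and the main obstacle, is to prove that $I+J$ exhausts the kernel. The natural route is to refine the Adams--Hilton model used in the proof of Theorem~\ref{ULcolim}: at each coordinate, replacing $S^2$ by $\cpinf$ corresponds rationally to passing from the AH model $\mathbb{Q}[b_i]$ of $\Omega S^2$ to the AH model $\Lambda(b_i)$ of $\Omega\cpinf$, witnessed over $\mathbb{Q}$ by adjoining a degree-$3$ generator $y_i$ with $dy_i=2b_i^2$ (together with higher generators that are redundant on rational homology, since $\cpinf\simeq_{\mathbb{Q}} K(\mathbb{Q},2)$). A direct computation of the homology of the enlarged tensor algebra produces exactly the relations in $I$: $b_i^2$ becomes a boundary, and the relations $[u_\sigma,b_{j_\sigma}]=0$ emerge from applying the derivation $d$ to products involving $y_i$ and $u_\sigma$ together with the relations already present in $J$. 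A Poincar\'{e}-series check against $U(L_{ab}\langle b_i\rangle\coprod L\langle u_\sigma\rangle)/(I+J)$ then confirms that no further relations appear, and the commutative diagram of algebras is immediate from the naturality of the AH model with respect to the coordinate-wise map of pairs $(S^2,\ast)\to(\cpinf,\ast)$.
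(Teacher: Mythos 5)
Your overall reduction -- comparing $\djkstwo$ with $\djk$ via the coordinatewise map induced by $\imath\colon S^{2}\to\cpinf$ and identifying $(\Omega\djk(\imath))_{\ast}$ with the quotient map $q$ -- is the same strategy as the paper's, but the two steps that carry the real content are gapped. First, your argument that $[u_{\sigma},b_{j_{\sigma}}]$ dies for $j_{\sigma}\in\sigma$ appeals to ``graded skew-symmetry and the Jacobi identity for higher Whitehead products'' to reduce everything to $[\imath,\imath]\in\pi_{3}(\cpinf)$. No such identity is available: for $\vert\sigma\vert\geq 2$ the class $u_{\sigma}$ is an independent generator rather than a Lie word in the $b_{i}$ (as you yourself note), and higher Whitehead products carry indeterminacy and admit no Jacobi-type expansion that you can quote to rewrite $[\widetilde{w}_{\sigma},\tilde{a}_{j_{\sigma}}]$ in terms of ordinary brackets of the $\tilde{a}_{i}$. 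The paper proves this vanishing by a different mechanism (Lemma~\ref{djkBubenik}): by naturality of Porter's fibration, the map $FW(S^{2})\to FW(\cpinf)$ induces a map of fibres $\Sigma^{n-1}(\Omega S^{2})^{(n)}\to S^{2n-1}$; since $[u,b_{i}]$ is killed by $U(\pi)$ it lifts to the fibre side in degree $2n$, and $\qhlgy{\Omega S^{2n-1}}$ is polynomial on a class of degree $2n-2$, so the lift maps to zero for degree reasons. (Your treatment of $b_{i}^{2}$ is fine.)

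Second, and more seriously, the identification of the kernel as exactly $I+J$ -- the heart of the theorem -- is asserted rather than proved. The enlarged Adams--Hilton model you sketch is not correct as stated: adjoining only $y_{i}$ with $dy_{i}=2b_{i}^{2}$ does not yield $\Lambda(b_{i})$ in homology (in $T(b,y;\,dy=b^{2})$ the cycle $yb+by$ is a nontrivial class in degree $4$), so the higher cellular generators of $AH(\cpinf)$ are not ``redundant'' -- they are precisely what kill such classes -- and their differentials, together with the generators coming from the product cells of each $BT^{\sigma}$ and the gluing over the simplices of $K$, would all have to be controlled. Likewise, the ``Poincar\'{e}-series check'' presupposes independent knowledge of the Poincar\'{e} series of $\qhlgy{\Omega\djk}$ and of $U(L_{ab}\langle b_{1},\ldots,b_{n}\rangle\coprod L\langle u_{\sigma}\rangle)/(I+J)$, neither of which you supply; knowing the former is essentially equivalent to the statement being proved. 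The paper sidesteps all of this by working one missing face at a time: it computes $\qhlgy{\Omega FW(\cpinf)}$ space-theoretically, using a natural right homotopy inverse of $\Omega j$ to split $\Omega FW(\cpinf)\simeq(\prod_{i=1}^{n}S^{1})\times\Omega S^{2n-1}$ and concluding that the factorization through the quotient by $I$ is an isomorphism, and then assembles $\qhlgy{\Omega\djk}$ by the colimit over $MF(K)$ provided by Proposition~\ref{MdjkAH}. To make your proposal into a proof you would need either to carry out such a fat-wedge comparison or to perform an honest computation with a correct model of $\djk$; as written, the decisive computation is missing.
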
 

Notice that the relation $b_{i}^{2}$ in $I$ lets us replace 
$L_{ds}\langle b_{1},\ldots,b_{n}\rangle$ in the statement of 
Theorem~\ref{hlgyloopdjk} by $L_{ab}\langle b_{1},\ldots,b_{n}\rangle$, 
where $L_{ab}$ is the free abelian Lie algebra generated by the indicated 
elements, which is characterized by having its bracket identically zero. 

Our main theorems are homotopy theoretic. For $1\leq i\leq n$, let
\(a_{i}\colon\namedright{S^{m_i+1}}{}{\djk(\underline{S})}\) be the
inclusion of the $i^{th}$-coordinate. The
analogue of the algebraic ideal $J$ occurs when
$\sigma=(i_{1},i_{2})$, in which case there is a Whitehead product
$w_{\sigma}=[a_{i_{1}},a_{i_{2}}]$; as in the algebraic case, this 
imposes relations determined by the Jacobi identity and face
relations in cases of the form $[w_{\sigma},a_{j}]$ when
$(i_{1},j)\in K$ or $(i_{2},j)\in K$. For $\vert\sigma\vert=1$, let
$W_{\sigma}$ be the collection of all independent Whitehead products
of the form  $[[w_{\sigma},a_{j_{1}}]\ldots, a_{j_{l}}]$ where
$1\leq j_{1}\leq\cdots\leq j_{l}\leq l$ and $1\leq l<\infty$.

\begin{theorem}
   \label{djksmaps}
   Let $K$ be a directed $MF$-complex on $n$ vertices, so that there is a
   homotopy equivalence $\zks\simeq\bigvee_{\alpha\in\mathcal{I}} S^{t_{\alpha}}$.
   The equivalence can be chosen so that the composite
   \[\nameddright{\bigvee_{\alpha\in\mathcal{I}} S^{t_{\alpha}}}{}{\zks}
        {}{\djks}\]
   is a wedge sum of the following maps:
   \begin{letterlist}
      \item a higher Whitehead product
            \(w_{\sigma}\colon\namedright{S^{t_{\sigma}}}{}{\djks}\)
            for each missing face $\sigma=(i_{1},\ldots,i_{k})\in MF(K)$, where
            $t_{\sigma}=k-1+(\Sigma_{j=1}^{k} m_{i_{j}})$;
      \item an iterated Whitehead product
            \[[[w_{\sigma},a_{j_{1}}]\ldots, a_{j_{l}}]\colon
                  \namedright{S^{t_{\alpha}}}{}{\djks}\]
            for each $\sigma\in MF(K)$ of dimension greater than $1$ and
            each list $1\leq j_{1}\leq\cdots\leq j_{l}\leq l$, where
            $1\leq l<\infty$ and $t_{\alpha}=t_{\sigma}+\Sigma_{t=1}^{l} m_{j_{t}}$;
      \item the collection of independent iterated Whitehead products
            $W_{\sigma}$ for each $\sigma\in MF(K)$ of dimension $1$.
   \end{letterlist}
\end{theorem}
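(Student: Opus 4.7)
The plan is to combine the rational loop-homology computation of Theorem \ref{ULcolim} with an explicit geometric realization of algebra generators by higher and iterated Whitehead products, and then use Corollary \ref{integralM} to promote the rational statement to the integral one.

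First I would pin down the rational homology of $\Omega\zks$. Looping the defining fibration gives \nameddright{\Omega\zks}{}{\Omega\djks}{}{\prod_{i=1}^n\Omega S^{m_i+1}}, and the coordinate inclusions \namedright{S^{m_i+1}}{}{\djks} assemble into a section of the right-hand map. By Theorem \ref{ULcolim}, this right-hand map on $H_*(-;\mathbb{Q})$ is the projection $\pi$, so $\qhlgy{\Omega\zks}$ is identified with the universal enveloping algebra of the Lie ideal $\mathcal{I}\subset (L_{ab}\langle b_1,\ldots,b_n\rangle\coprod L\langle u_\sigma\rangle)/J$ generated by the $u_\sigma$. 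For $|\sigma|>1$ the $u_\sigma$ are algebraically independent of the $b_i$, so $\mathcal{I}$ has a basis consisting of $u_\sigma$ and all iterated brackets $[[u_\sigma,b_{j_1}],\ldots,b_{j_l}]$; for $|\sigma|=1$ the Jacobi identity together with the face relations $[b_i,b_j]=0$ for $(i,j)\in K$ cut down these iterated brackets to those enumerated by the independent collection $W_\sigma$.

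Second, for each basis element of $\mathcal{I}$ I would produce a geometric realization. The higher Whitehead product $w_\sigma\colon\namedright{S^{t_\sigma}}{}{\djks}$ exists precisely because every proper subface of $\sigma\in MF(K)$ lies in $K$, so the attaching map of the top cell of $S^{m_{i_1}+1}\times\cdots\times S^{m_{i_k}+1}$ factors through $FW(\sigma)\subset\djks$. Composing $w_\sigma$ into $\prod_{i=1}^n S^{m_i+1}$ yields the higher Whitehead product of the factor inclusions into a product, which is null; hence $w_\sigma$ lifts through $\zks$. The iterated products $[[w_\sigma,a_{j_1}],\ldots,a_{j_l}]$ lift for the same reason, since Whitehead bracketing with any map into the product is again null. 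By Theorem \ref{ULcolim}, the adjoints of these products have Hurewicz images $u_\sigma$ and $[[u_\sigma,b_{j_1}],\ldots,b_{j_l}]$ respectively, so they realize exactly the generators of $\mathcal{I}$ listed above.

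Third, I would assemble these lifts into a single map $\phi\colon\bigvee_\alpha S^{t_\alpha}\to\zks$ whose post-composition with the inclusion into $\djks$ is the prescribed wedge sum of maps (a)--(c). By construction $\Omega\phi$ surjects onto a set of Lie-algebra generators of $\mathcal{I}$, so $\phi$ is a rational homotopy equivalence. Corollary \ref{integralM} independently provides an integral wedge-of-spheres decomposition of $\zks$; a dimension count matches the two sides sphere for sphere, and combined with the rational equivalence this promotes $\phi$ to an integral homotopy equivalence by a cellular matching on integer homology, to which Whitehead's theorem then applies.

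The main obstacle is expected to be the case $|\sigma|=1$, where $u_\sigma=[b_{i_1},b_{i_2}]$ is not algebraically independent from the $b_i$. The ideal $J$ imposes Jacobi-plus-face relations that rule out a uniform enumeration of the iterated brackets, and selecting an independent family $W_\sigma$ whose Whitehead-product realizations lift to a basis of the relevant piece of $\mathcal{I}$ demands a combinatorial analysis tailored to $K$. A secondary delicacy is the indeterminacy of higher Whitehead products, which must be controlled so that a coherent choice of lifts through $\zks$ can be made across all missing faces simultaneously without reintroducing relations that would obstruct the matching of generators.
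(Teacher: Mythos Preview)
Your proposal is correct and follows essentially the same route as the paper: compute $\qhlgy{\Omega\zks}$ as the universal enveloping algebra of the kernel Lie algebra via the split fibration and Theorem~\ref{ULcolim}, realize a generating set by (lifted) higher and iterated Whitehead products with the prescribed Hurewicz images, and then upgrade the resulting rational equivalence to an integral one using Corollary~\ref{integralM} and Whitehead's theorem. The only minor simplification the paper makes is that the final step needs no separate dimension count---since both sides are wedges of simply-connected spheres their integral homology is torsion free, so the rational homology isomorphism is already an integral one.
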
 

Let $\tilde{a}_{i}$ be the composite
\(\tilde{a}_{i}\colon\nameddright{S^{2}}{\imath}{\cpinf}{}{\djk}\)
where the right map is the inclusion of the $i^{th}$-coordinate.  
If $\sigma=(i_{1},i_{2})$, let $\widetilde{w}_{\sigma}$ be the
Whitehead product $[\tilde{a}_{i_{1}},\tilde{a}_{i_{2}}]$. As above,
for $\vert\sigma\vert=1$, let $\widetilde{W}_{\sigma}$ be the
collection of all independent Whitehead products of the form
$[[\widetilde{w}_{\sigma},\tilde{a}_{j_{1}}]\ldots,
\tilde{a}_{j_{l}}]$ where $1\leq j_{1}\leq\cdots\leq j_{l}\leq l$
and $1\leq l<\infty$. Given $\sigma=(i_{1},\ldots,i_{k})$, let
$J_{\sigma}=\{1,\ldots,n\}-\{i_{1},\ldots,i_{k}\}$.

\begin{theorem}
   \label{djkmaps}
   Let $K$ be a directed $MF$-complex on $n$ vertices, so that there is a
   homotopy equivalence
   $\zk\simeq\bigvee_{\widetilde{\alpha}\in\mathcal{\widetilde{I}}}
        S^{t_{\widetilde{\alpha}}}$.
   The equivalence can be chosen so that the composite
   \[\nameddright{\bigvee_{\widetilde{\alpha}\in\mathcal{\widetilde{I}}}
        S^{t_{\widetilde{\alpha}}}}{}{\zk}{}{\djk}\]
   is a wedge sum of the following maps:
   \begin{letterlist}
      \item a higher Whitehead product
            \(\widetilde{w}_{\sigma}\colon\namedright{S^{2\vert\sigma\vert+1}}{}{\djk}\)
            for each missing face $\sigma\in MF(K)$;
      \item an iterated Whitehead product
            \[[[\widetilde{w}_{\sigma},\tilde{a}_{j_{1}}]\ldots, \tilde{a}_{j_{l}}]\colon
                  \namedright{S^{2\vert\sigma\vert+l+1}}{}{\djk}\]
            for each $\sigma\in MF(K)$ of dimension greater than $1$ and each list
            $j_{1}<\cdots<j_{l}$ in $J_{\sigma}$, where $1\leq l\leq n$;
      \item the collection of independent iterated Whitehead products
            $\widetilde{W}_{\sigma}$ for each $\sigma\in MF(K)$ of dimension $1$.
   \end{letterlist}
\end{theorem}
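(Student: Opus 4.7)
The plan is to construct each specified Whitehead product directly as a map into $\djk$, lift it along the homotopy fibration to $\zk$, and then verify using Theorem~\ref{hlgyloopdjk} that the assembled map from a wedge of spheres is a homotopy equivalence.

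First, for each missing face $\sigma\in MF(K)$ I would build the higher Whitehead product $\widetilde{w}_\sigma\colon S^{2\vert\sigma\vert+1}\to\djk$ from the coordinate inclusions $\tilde a_i$ (for $i\in\sigma$), paralleling the construction in the proof of Theorem~\ref{djksmaps}. The iterated brackets $[[\widetilde{w}_\sigma,\tilde a_{j_1}],\ldots,\tilde a_{j_l}]$ and the collection $\widetilde W_\sigma$ are then defined as genuine Whitehead products. Each such map projects trivially to the $H$-space $\prod\cpinf$---Whitehead products vanish in an $H$-space---so each lifts to a map $S^{t}\to\zk$. Collecting the lifts assembles into a single map
\[
f\colon\bigvee_{\widetilde\alpha\in\widetilde{\mathcal I}}S^{t_{\widetilde\alpha}}\longrightarrow\zk.
\]

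Second, I would verify $f$ is a homotopy equivalence by a Hurewicz argument. Adjointing each wedge summand to a map from $S^{t_{\widetilde\alpha}-1}$ into $\Omega\zk$ and postcomposing into $\Omega\djk$, the Hurewicz image is exactly the iterated Samelson bracket $[[u_\sigma,b_{j_1}],\ldots,b_{j_l}]$ by naturality of the Samelson--Whitehead correspondence. By Theorem~\ref{hlgyloopdjk} together with the Milnor--Moore theorem applied to the loops of $\zk\to\djk\to\prod\cpinf$, these brackets form a basis for the Lie-algebra of primitives in $H_{*}(\Omega\zk;\mathbb{Q})$ complementary to $L_{ab}\langle b_1,\ldots,b_n\rangle$. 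Hence $f$ induces an isomorphism on rational homology; since both source and target are wedges of simply connected spheres, their integral homology is torsion-free and concentrated in the listed degrees, so the same data force $f$ to be an integral homology isomorphism, hence a homotopy equivalence by Whitehead's theorem. Composing $f$ with $\zk\to\djk$ recovers the stated wedge sum.

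The main obstacle is identifying the correct combinatorial basis, which is where Theorem~\ref{djkmaps} deviates most sharply from Theorem~\ref{djksmaps}. Two relations from the ideal $I+J$ of Theorem~\ref{hlgyloopdjk} govern the count. The relation $[u_\sigma,b_j]=0$ for $j\in\sigma$ forces the iterated indices to lie in $J_\sigma$; and the commutativity of $L_{ab}\langle b_1,\ldots,b_n\rangle$ combined with the graded Jacobi identity makes $[[u_\sigma,b_{j_1}],\ldots,b_{j_l}]$ symmetric in $j_1,\ldots,j_l$ up to sign, so only strictly increasing sequences give independent elements; this is why part (b) requires $j_1<\cdots<j_l$ in $J_\sigma$ with $l\le n$, in contrast with the weaker condition of Theorem~\ref{djksmaps}(b). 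For dimension-one missing faces $\sigma=(i_1,i_2)$, the element $u_\sigma=[b_{i_1},b_{i_2}]$ itself lies in the sub-Lie algebra generated by the $b_i$, and the Jacobi identity together with the face relations $[b_i,b_j]=0$ for $(i,j)\in K$ produces additional identifications among iterated brackets; unwinding these is exactly what dictates the independent collection $\widetilde W_\sigma$ in part (c). Once this basis is pinned down, the matching between the indexing set $\widetilde{\mathcal I}$ of sphere summands and the constructed Whitehead products becomes an automatic dimension count, and the theorem follows.
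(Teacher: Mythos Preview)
Your proposal is correct and follows essentially the same route as the paper. The paper derives Theorem~\ref{djkmaps} as a rephrasing of Theorem~\ref{zkdecomp}, whose proof mirrors Theorem~\ref{zksdecomp}: construct the map $\widetilde{W}$ of higher and iterated Whitehead products, lift it to $\zk$ (the paper invokes Lemma~\ref{HWZK} rather than the $H$-space argument, but either works), show via Corollary~\ref{Hurewiczzk} and Proposition~\ref{hlgyloopzk} that $\Omega\widetilde{W}$ realizes the inclusion $UL\langle\widetilde{R}\rangle\hookrightarrow\qhlgy{\Omega\djk}$ and hence is a rational equivalence onto $\Omega\zk$, and then upgrade to an integral equivalence using Corollary~\ref{integralM}. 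Your identification of the ``main obstacle'' is exactly right: the paper isolates the combinatorial basis computation as Lemma~\ref{zkLie}, where the Jacobi and face relations are worked out explicitly to show that the kernel Lie algebra is free on $\widetilde{R}$, which is precisely the independence analysis you sketch in your final paragraph.
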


Although in this paper our goal is to identify the map $\zk\lra\djk$
for $K$ a directed $MF$-complex, we expect this can be generalized 
first to a much larger family of simplicial complexes and second to the map
between polyhedral products of any $n$ tuple of topological pairs.
As a consequence one would obtain a homotopy wedge decomposition of
$\zk(\underline{X,A})$, which will reduce to a wedge of spheres in
the case of $\zk$.

\section{The objects of study}
\label{sec:objects}

This section gives an initial analysis of directed $MF$-complexes. First,
we compare directed $MF$-complexes to another family of simplicial complexes
that has received considerable attention for its role in producing
wedge decompositions of \zk. Then we prove the wedge decompositions 
in Theorem~\ref{Mcomplexwedge} and Corollary~\ref{integralM}.

A simplicial complex $K$ on $n$ vertices is \emph{shifted} if there 
is an ordering on the
vertex set such that whenever $\sigma$ is a simplex of $K$ and
$v^{\prime}<v$, then $(\sigma - v)\cup v^{\prime}$ is a simplex of
$K$. In~\cite{GT1} it was shown that if $K$ is a shifted complex then~\zk\ 
is homotopy equivalent to a wedge of spheres. In fact, in~\cite{GT1} it was 
shown that if $K$ is shifted then the polyhedral product 
$(\underline{C\Omega X,\Omega X})$ is homotopy equivalent 
to a wedge of suspensions, where $C\Omega X$ is the cone on $\Omega X$. 
This was generalized to any polyhedral product 
$(\underline{CX,X})^{K}$ in~\cite{GT2,IK}. 

We show that directed $MF$-complexes and shifted complexes form distinct
families, with nontrivial intersection. Consider the three examples: 
\[\begin{tikzpicture} 
     \draw (0,0)--(1,1)--(2,0)--(1,-1)--(0,0); 
     \draw (1,1)--(1,-1); 
    \node [above] at (1,1) {\scriptsize 1}; 
     \node [right] at (2,0) {\scriptsize 3}; 
     \node [below] at (1,-1) {\scriptsize 2}; 
     \node [left] at (0,0) {\scriptsize 4}; 
     \node at (1,-2) {$K_{1}$}; 
  \end{tikzpicture}\qquad\qquad  
  \begin{tikzpicture} 
     \draw (0,0)--(1,1)--(2,0)--(1,-1); 
     \draw (1,1)--(1,-1); 
     \node [above] at (1,1) {\scriptsize 1}; 
     \node [right] at (2,0) {\scriptsize 3}; 
     \node [below] at (1,-1) {\scriptsize 2}; 
     \node [left] at (0,0) {\scriptsize 4}; 
     \node at (1,-2) {$K_{2}$}; 
  \end{tikzpicture} \qquad\qquad  
  \begin{tikzpicture} 
     \draw (0,0)--(1,1)--(2,0)--(1,-1)--(0,0); 
     \draw (1,1)--(1,-1); 
     \draw (1,1)--(2.5,1)--(2,0); 
     \node [above] at (1,1) {\scriptsize 1}; 
     \node [right] at (2,0) {\scriptsize 3}; 
     \node [below] at (1,-1) {\scriptsize 2}; 
     \node [left] at (0,0) {\scriptsize 4}; 
     \node [right] at (2.5,1) {\scriptsize 5}; 
     \node at (1,-2) {$K_{3}$}; 
  \end{tikzpicture}\]  
Observe that $K_{1}$ and $K_{2}$ are shifted but $K_{3}$ is not. The
list of minimal missing faces in each case is:
\begin{enumerate}
   \item[] $MF(K_{1})=\{(3,4),(1,2,3),(1,2,4)\}$;
   \item[] $MF(K_{2})=\{(2,4),(3,4),(1,2,3)\}$;
   \item[] $MF(K_{3})=\{(2,5),(3,4),(4,5),(1,2,3),(1,2,4),(1,3,5)\}$.
\end{enumerate} 
Observe that $|K_{1}|=\bigcup_{\sigma\in MF(K_{1})}|\partial\sigma|$
and $|K_{3}|=\bigcup_{\sigma\in MF(K_{3})}|\partial\sigma|$, but
in contrast, 
$\bigcup_{\sigma\in MF(K_{2})}|\partial\sigma|=\mbox{$|K_{2}-(1,4)|$}$. 
As well, $K_{1}$ can be formed by gluing the boundary of $(1,2,3)$ to 
the boundary of $(1,2,4)$ along the common face $(1,2)$, and $K_{3}$ 
can be formed by gluing the boundary of $(1,3,5)$ to~$K_{1}$ along 
the common face $(1,3)$. Thus $K_{1}$ is a shifted complex which is 
also a directed $MF$-complex, while $K_{2}$ is a shifted complex 
which is not a directed $MF$-complex, and $K_{3}$ is a directed  
$MF$-complex which is not shifted. 

As noted in the Introduction, a simple example of an $MF$-complex 
which is not a directed $MF$-complex is the $1$-skeleton 
of a tetrahedron. However, by~\cite{GT1}, in this case the conclusions of 
Theorem~\ref{Mcomplexwedge} and Corollary~\ref{integralM} nevertheless 
hold; in particular, $\zk$ is homotopy equivalent to a wedge of spheres.  
It is useful to also give an example of an $MF$-complex which is not a 
directed $MF$-complex and for which Theorem~\ref{Mcomplexwedge} 
and Corollary~\ref{integralM} fail. Let $K$ be the simplicial complex on~$8$ 
vertices given by 
\[\begin{tikzpicture} 
     \draw (0,1.5)--(1,2)--(1,1)--(0,1.5); 
     \draw (1,2)--(1.5,3)--(2,2)--(1,2); 
     \draw (2,2)--(3,1.5)--(2,1)--(2,2); 
     \draw (1,1)--(2,1)--(1.5,0)--(1,1); 
     \node [above] at (1.5,3) {\scriptsize $1$}; 
    \node [left] at (0,1.5) {\scriptsize $2$}; 
    \node [below] at (1.5,0) {\scriptsize $3$}; 
    \node [right] at (3,1.5) {\scriptsize $4$}; 
    \node [above right] at (2,2) {\scriptsize $5$}; 
    \node [above left] at (1,2) {\scriptsize $6$}; 
    \node [below left] at (1,1) {\scriptsize $7$}; 
    \node [below right] at (2,1) {\scriptsize $8$}; 
  \end{tikzpicture}\] 
Notice that $K$ is the union of the boundaries 
of the faces $\{(1,5,6), (2,6,7), (3,7,8), (4,5,8)\}$. This implies that 
$|K|=\bigcup_{\sigma\in MF(K)}|\partial\sigma|$, so $K$ is an 
$MF$-complex. However, $K$ is not a directed $MF$-complex.  
For, thinking of $|K|$ as the union of the boundaries of four triangles, it is 
possible to glue the second triangle to the first and the third to 
the first two along a common vertex, but the fourth triangle is 
glued to the first three along two vertices, which is not a  
common face. In this case the existence of the boundary of the 
square in $K$ leads to nontrivial cup products in the cohomology 
of~$\zk$, implying that \zk\ cannot be a wedge of spheres. 
\medskip 

Now we turn to the homotopy types of $\zk(\underline{X})$ and 
\zk\ when $K$ is a directed $MF$-complex. For clarity, we restate 
Theorem~\ref{Mcomplexwedge} and Corollary~\ref{integralM}. 

\begin{theorem} 
   Let $K$ be a directed $MF$-complex on $n$ vertices. Let
   $\underline{X}=\{X_{1},\ldots,X_{n}\}$ where each $X_{i}$ is
   a path-connected topological space. Then $\zk(\underline{X})$ is
   homotopy equivalent to a wedge of spaces of the form
   $\Sigma^{t}\Omega X_{i_{1}}\wedge\cdots\wedge\Omega X_{i_{k}}$
   for various $1\leq t<n$ and sequences $(i_{1},\ldots,i_{k})$
   where ${1\leq i_{1}<\cdots<i_{k}\leq n}$.
\end{theorem} 

\begin{proof} 
Write $Y\in\mathcal{W}$ if $Y$ is a space that is homotopy equivalent
to a wedge of spaces of the form
$\Sigma^{t}\Omega X_{i_{1}}\wedge\cdots\wedge\Omega X_{i_{k}}$
for various $1\leq t<n$ and sequences $(i_{1},\ldots,i_{k})$
where $1\leq i_{1}<\cdots<i_{k}\leq n$. For any such sequence, let
$FW(i_{1},\ldots,i_{k})$ be the fat wedge of the product
$\prod_{j=1}^{k} X_{i_{j}}$. By definition, the homotopy fibre
of the inclusion
\(\namedright{FW(i_{1},\ldots,i_{k})}{}{\prod_{j=1}^{k} X_{i_{j}}}$
is \zk\ for $K=(\Delta^{k})_{k-1}=\partial\sigma$. On the other hand,
by~\cite{P2}, this homotopy fibre is homotopy equivalent to
$\Sigma^{k-1}\Omega X_{i_{1}}\wedge\cdots\wedge\Omega X_{i_{k}}$.
In particular, in an $MF$-complex
$\vert K\vert=\bigcup_{\sigma\in MF(K)}\vert\partial\sigma\vert$, we have
$\mathcal{Z}_{\partial\sigma}\in\mathcal{W}$ for every $\sigma\in MF(K)$. 

By definition, since $K$ is a directed $MF$-complex, there is a 
sequence of subcomplexes 
$\emptyset\subseteq K_{1}\subseteq\cdots\subseteq K_{l}=K$ 
for some $l$, where $K_{i}=K_{i-1}\cup\partial\sigma_{i}$ for 
$\sigma_{i}\in MF(K)$ and $K_{i-1}\cap\sigma_{i}$ is a face common 
to $K_{i-1}$ and $\sigma_{i}$. We proceed with the proof by induction. 
Since $K_{1}=\partial\sigma_{1}$, the previous paragraph shows that
$\mathcal{Z}_{K_{1}}\in\mathcal{W}$. Now suppose that
$\mathcal{Z}_{K_{i-1}}\in\mathcal{W}$. We have $K_{i}$ constructed
by gluing $\partial\sigma_{i}$ and $K_{i-1}$ along a common face,
$\mathcal{Z}_{\partial\sigma_{i}}\in\mathcal{W}$ by the preceding
paragraph, and $\mathcal{Z}_{K_{i-1}}\in\mathcal{W}$ by assumption.
Under these circumstances, \cite[Theorem 1.3]{GT1} implies that
$Z_{K_{i}}\in\mathcal{W}$. (Actually, \cite[Theorem 1.3]{GT1} is
stated for the special case when $X_{i}=\cpinf$ for $1\leq i\leq n$,
but the proof goes through without change in the general case.)
Hence, by induction, $\zk(\underline{X})=\mathcal{Z}_{K_{l}}$ is in
$\mathcal{W}$. 
\end{proof} 

\begin{corollary} 
   Let $K$ be a directed $MF$-complex on $n$ vertices. Then \zks\ 
   is homotopy equivalent to an infinite wedge of simply-connected 
   spheres, and \zk\ is homotopy equivalent to a finite wedge of
   simply-connected spheres.
\end{corollary}

\begin{proof} 
In Theorem~\ref{Mcomplexwedge}, suppose that $X_{i}=S^{m_{i}+1}$ for
each $1\leq i\leq n$. Then \zks\ is homotopy equivalent to a wedge
of spaces of the form $\Sigma^{t}\Omega
S^{m_{i_{1}}+1}\wedge\cdots\wedge\Omega S^{m_{i_{k}}+1}$ for various
$1\leq t<n$ and sequences $(i_{1},\ldots,i_{k})$. By~\cite{J}, there
is a homotopy equivalence $\Sigma\Omega
S^{m_{i}+1}\simeq\bigvee_{j=1}^{\infty} S^{jm_{i}+1}$. Since
$S^{jm_{i}+1}$ is a suspension, iterating this homotopy equivalence shows that
each wedge summand $\Sigma^{t}\Omega
S^{m_{i_{1}}+1}\wedge\cdots\wedge\Omega S^{m_{i_{k}}+1}$ is homotopy
equivalent to an infinite wedge of simply-connected spheres. Thus \zks\ is
homotopy equivalent to ain infinite wedge of simply-connected spheres.

Next, in Theorem~\ref{Mcomplexwedge}, suppose that $X_{i}=\cpinf$
for each $1\leq i\leq n$. Then \zk\ is homotopy equivalent to a
wedge of spaces of the form
$\Sigma^{t}\Omega\cpinf_{i_{1}}\wedge\cdots\wedge\Omega\cpinf_{i_{k}}$
for various $1\leq t<n$ and sequences $(i_{1},\ldots,i_{k})$. Since
$\Omega\cpinf\simeq S^{1}$, each wedge summand
$\Sigma^{t}\Omega\cpinf_{i_{1}}\wedge\cdots\wedge\Omega\cpinf_{i_{k}}$
is homotopy equivalent to $S^{k+t}$. Thus \zk\ is homotopy
equivalent to a finite wedge of simply-connected spheres. 
\end{proof}

\section{Higher Whitehead products and Fat wedges}
\label{sec:fatwedge}

In this section we define a higher Whitehead product by means
of a fat wedge, and relate the existence of a missing face in $K$
to the existence of a nontrivial higher Whitehead in $DJ_{K}(\underline{X})$.
Let $X_{1},\ldots,X_{n}$ be path-connected spaces and let
$\underline{X}=\{X_{1},\ldots,X_{n}\}$. The fat wedge is the space
\[FW(\underline{X})=\{(x_{1},\ldots,x_{n})\in X_{1}\times\cdots\times X_{n}
      \mid \mbox{at least one}\ x_{i}=\ast\}.\]
Consider the homotopy fibration obtained by including $FW(\underline{X})$
into the product $X_{1}\times\cdots\times X_{n}$. The homotopy type of the
fibre was first identified by Porter~\cite{P2}, who showed that there
is a homotopy fibration
\[\nameddright{\Sigma^{n-1}\Omega X_{1}\wedge\cdots\wedge\Omega X_{n}}
    {}{FW(\underline{X})}{}{X_{1}\times\cdots\times X_{n}}.\] 

There is a reformulation of this in terms of the polyhedral product. 
Let$K=\partial\Delta^{n-1}$, the boundary of the $(n-1)$-simplex. 
Let $CY$ be the cone on $Y$, parameterized as $CY=[0,1]\times Y/\sim$ 
for $(0,y)\sim\ast$ - that is, the tip of the cone is at $0$. Observe that 
there is a map of pairs 
\(\namedright{C\Omega X_{i},\Omega X_{i})}{}{(X_{i},\ast)}\) 
given by sending $(s,\omega)\in C\Omega X_{i}$ to $\omega(s)$. Then, 
essentially by~\cite{P2}, the map 
\(\namedright{\Sigma^{n-1}\Omega X_{1}\wedge\cdots\wedge\Omega X_{n}} 
        {}{FW(\underline{X})}\) 
can be identified with the map 
\(\namedright{(\underline{C\Omega X},\underline{\Omega X})^{K}} 
      {}{(\underline{X},\underline{\ast})^{K}}\). 

If each $X_{i}$ is a suspension, $X_{i}=\Sigma Y_{i}$, then the
suspension map
\(E\colon\namedright{Y}{}{\Omega\Sigma Y}\)
induces a composite
\[\phi_{n}\colon\nameddright{\Sigma^{n-1} Y_{1}\wedge\cdots\wedge Y_{n}}{}
     {\Sigma^{n-1}\Omega\Sigma Y_{1}\wedge\cdots\wedge\Omega\Sigma Y_{n}}
     {}{FW(\underline{\Sigma Y})}.\]
The map $\phi_{n}$ is the attaching map that yields the product. That is,
there is a homotopy cofibration
\[\nameddright{\Sigma^{n-1} Y_{1}\wedge\cdots\wedge Y_{n}}{\phi_{n}}
    {FW(\underline{\Sigma Y})}{}{\Sigma Y_{1}\times\cdots\times\Sigma Y_{n}}.\]

In the case $n=2$, we have $FW(\underline{\Sigma Y})=\Sigma
Y_{1}\vee\Sigma Y_{2}$ and $\phi_{2}$ is the Whitehead product
$[i_{1},i_{2}]$, where $i_{1}$ and $i_{2}$ are the inclusions of
$\Sigma Y_{1}$ and $\Sigma Y_{2}$ respectively into $\Sigma
Y_{1}\vee\Sigma Y_{2}$. This is the universal example for Whitehead
products. Given a space $Z$ and maps \(f\colon\namedright{\Sigma
Y_{1}}{}{Z}\) and \(g\colon\namedright{\Sigma Y_{2}}{}{Z}\), the
Whitehead product $[f,g]$ of $f$ and $g$ is the composite
\(\nameddright{\Sigma Y_{1}\wedge Y_{2}}{\phi_{2}}{\Sigma
Y_{1}\vee\Sigma Y_{2}}
   {f\perp g}{Z}\),
where~$\perp$ denotes the wedge sum. Porter~\cite{P1} used the maps
$\phi_{n}$ for $n>2$ as universal examples to define higher Whitehead products.

\begin{definition}
For $n\geq 2$, let $Y_{1},\ldots,Y_{n}$ and $Z$ be path-connected
spaces, and let
\(f_{i}\colon\namedright{\Sigma Y_{i}}{}{Z}\)
be maps. Suppose that the wedge sum
\(\namedright{\bigvee_{i=1}^{n}\Sigma Y_{i}}{}{Z}\)
of the maps $f_{i}$ extends to a map
\(f\colon\namedright{FW(\underline{\Sigma Y})}{}{Z}\).
The \emph{$n^{th}$-higher Whitehead product} of the maps
$f_{1},\ldots,f_{n}$ is the composite
\[[f_{1},\ldots,f_{n}]\colon\nameddright
    {\Sigma^{n-1} Y_{1}\wedge\cdots\wedge Y_{n}}{\phi_{n}}
    {FW(\underline{\Sigma Y})}{f}{Z}.\]
\end{definition}

If $n=2$, the Whitehead product of two maps $f_{1}$ and $f_{2}$ is
always defined, and the homotopy class of $[f_{1},f_{2}]$ is
uniquely determined by the homotopy classes of $f_{1}$ and $f_{2}$.
If $n>2$, it may not be the case that the higher Whitehead product
of $n$ maps $f_{1},\ldots,f_{n}$ exists, as there may be nontrivial
obstructions to extending the given map
\(\namedright{\bigvee_{i=1}^{n}\Sigma Y_{i}}{}{Z}\) to the fat
wedge $FW(\underline{\Sigma Y})$. Even if such an extension exists, there 
may be many inequivalent choices of an extension, implying that the homotopy
class of $[f_{1},\ldots,f_{n}]$ need not be uniquely determined by
the homotopy classes of $f_{1},\ldots,f_{n}$.

When $n=2$, the adjoint of the Whitehead product $[f_{1},f_{2}]$ is
homotopic to a Samelson product. Its image in homology is given by
commutators. We wish to have analogous information about higher
Whitehead products. The universal example is given by the adjoint
of $\phi_{n}$, which is a map
\(\namedright{\Sigma^{n-2} Y_{1}\wedge\cdots\wedge Y_{n}}{}
    {\Omega FW(\underline{\Sigma Y})}\).
We want to know the Hurewicz image of this map. To do so we need a
good model for $\hlgy{\Omega FW(\underline{\Sigma Y})}$ which sees
this Hurewicz image. Producing such a model in the case when each $Y_{i}$
is a sphere is the subject of Section~\ref{sec:AdamsHilton}.

Before getting to this, we give a general result which identifies
nontrivial higher Whitehead products in \djky\ for any simplicial
complex $K$. In short, there is a nontrivial higher Whitehead
product for each missing face of $K$ which, moreover, lifts to \zky.
In what follows we will consider sub-products $\prod_{j=1}^{k}\Sigma
Y_{i_{j}}$ of $\prod_{i=1}^{n}\Sigma Y$. If
$\sigma=(i_{1},\ldots,i_{k})$, let $FW(\underline{\Sigma Y},\sigma)$
be the fat wedge of the sub-product $\prod_{j=1}^{k}\Sigma
Y_{i_{j}}$. 

\begin{lemma} 
   \label{fgsigmasplit} 
   Let $K$ be a simplicial complex on $n$ vertices. If
   $\sigma=(i_{1},\ldots,i_{k})\in MF(K)$ then there exists maps 
   \(f_{\sigma}\colon\namedright{FW(\underline{\Sigma Y},\sigma)}{}{\djky}\) 
   and 
   \(g_{\sigma}\colon\namedright{\Sigma^{k-1} Y_{i_{1}}\wedge\cdots\wedge Y_{i_{k}}} 
           {}{\zky}\) 
    which both have left homotopy inverses, and which fit into a 
    homotopy commutative diagram 
   \[\diagram 
          \Sigma^{k-1} Y_{i_{1}}\wedge\cdots\wedge Y_{i_{k}} 
                 \rto^-{\phi_{k}}\dto^{g_{\sigma}} 
              & FW(\underline{\Sigma Y})\dto^{f_{\sigma}} \\ 
          \zky\rto & \djky. 
     \enddiagram\] 
\end{lemma} 

\begin{proof} 
Since $\sigma\in MF(K)$, it is the full subcomplex of $K$ on the vertex 
set $\{i_{1},\ldots,i_{k}\}$. Therefore, if $(\underline{X},\underline{A})$ 
is any $n$ pairs of $CW$-complexes $(X_{i},A_{i})$ then the polyhedral product 
$(\underline{X},\underline{A})^{\sigma}$ is a natural retract of 
$(\underline{X},\underline{A})^{K}$. Applying this to the map 
\(\namedright{(\underline{C\Omega\Sigma Y},\underline{\Omega\Sigma Y})} 
     {}{(\underline{\Sigma Y},\underline{\ast})}\) 
obtained from the maps of pairs 
\(\namedright{(C\Omega\Sigma Y_{i},\Omega\Sigma Y_{i})}{} 
      {(\Sigma Y_{i},\ast)}\), 
we obtain a homotopy commutative diagram 
\[\diagram 
       \Sigma^{k-1}\Omega\Sigma Y_{i_{1}}\wedge\cdots\wedge\Omega\Sigma Y_{i_{k}} 
              \rto\dto^{g^{\prime}_{\sigma}} 
           & FW(\underline{\Sigma Y})\dto^{f_{\sigma}} \\ 
       \zky\rto & \djky 
  \enddiagram\]
where $f_{\sigma}$ and $g^{\prime}_{\sigma}$ have left homotopy 
inverses. Precompose the diagram with the map 
\[\epsilon\colon\namedright{\Sigma^{k-1} Y_{i_{1}}\wedge\cdots\wedge Y_{i_{k}}} 
     {}{\Sigma^{k-1}\Omega\Sigma Y_{i_{1}}\wedge\cdots\wedge\Omega\Sigma Y_{i_{k}}}\] 
induced by the suspension maps 
\(\namedright{Y_{i}}{E}{\Omega\Sigma Y_{i}}\). 
By definition, $\phi_{k}$ is $\epsilon$ composed with the upper horizontal 
map in the preceding diagram, so if let 
$g_{\sigma}=g^{\prime}_{\sigma}\circ\epsilon$ 
then we obtain the homotopy commutative diagram asserted by the lemma. 

It remains to show that $g_{\sigma}$ has a left homotopy inverse. 
But as $\Sigma E$ has a left homotopy inverse, so does $\epsilon$, 
and therefore as $g^{\prime}_{\sigma}$ has a left homotopy inverse, 
so does $g_{\sigma}$. 
\end{proof}

\section{Adams-Hilton models}
\label{sec:AdamsHilton}

Let $X$ be a simply-connected $CW$-complex of finite type and $R$ a
commutative ring. The Adams-Hilton
model~\cite{AH} is a means of calculating $\hlgy{\Omega X;R}$. One advantage
it has over other models for $\hlgy{\Omega X;R}$ is its relative simplicity,
which allows for concrete calculations in certain cases. Its presentation is
stated in Theorem~\ref{AH}.

Let $V$ be a graded $R$-module, and
let $T(V)$ be the free tensor algebra on $V$. For a space $X$, let
$CU_{\ast}(X)$ be the cubical singular chain complex on $X$ with
coefficients in $R$. Note that $CU_{\ast}(X)$ is naturally chain equivalent
to the simplicial singular chain complex on~$X$. If $X$
is a homotopy associative $H$-space then the multiplication on $X$
induces a multiplication on $CU_{\ast}(X)$, giving it the structure
of a differential graded algebra. A map
\(\namedright{A}{}{B}\)
of differential graded algebras is a \emph{quasi-isomorphism} if
it induces an isomorphism in homology.

\begin{theorem}
   \label{AH}
   Let $R$ be a commutative ring and let $X$ be a simply-connected
   $CW$-complex of finite type. The Adams-Hilton model for $X$
   is a differential graded $R$-algebra $AH(X)$ satisfying:
   \begin{letterlist}
      \item if $X=pt\ \cup(\bigcup_{\alpha\in S} e_{\alpha})$ is a
            $CW$-decomposition of $X$ then
            $AH(X)=T(V; d_{V})$ where $V=\{b_{\alpha}\}_{\alpha\in S}$
            and $\vert b_{\alpha}\vert=\vert e_{\alpha}\vert -1$;
      \item the differential $d_{V}$ depends on the attaching maps
            of the $CW$-complex $X$;
      \item there is a map of differential graded algebras
            \(\theta_{X}\colon\namedright{AH(X)}{}{CU_{\ast}(\Omega X)}\)
            which induces an isomorphism
            $\hlgy{AH(X)}\cong\hlgy{\Omega X;R}$.
   \end{letterlist}
   $\qqed$
\end{theorem}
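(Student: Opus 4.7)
The plan is to prove Theorem~\ref{AH} by induction on the cellular filtration of $X$, constructing $AH(X^{(n)})$ and the comparison map $\theta_{X^{(n)}}$ simultaneously. Since $X$ is simply-connected, we may choose a $CW$-decomposition with $X^{(1)}=pt$, and take $AH(X^{(1)})=R$ with $\theta_{X^{(1)}}$ the unit map into $CU_{\ast}(\Omega\,pt)=R$, which is trivially a quasi-isomorphism.

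For the inductive step, assume that $AH(X^{(n-1)})=T(V^{(n-1)};d)$ together with a quasi-isomorphism \(\theta_{X^{(n-1)}}\colon\namedright{AH(X^{(n-1)})}{}{CU_{\ast}(\Omega X^{(n-1)})}\) have been defined. For each $n$-cell $e_{\alpha}$ with attaching map \(f_{\alpha}\colon\namedright{S^{n-1}}{}{X^{(n-1)}}\), form the adjoint \(\widetilde{f}_{\alpha}\colon\namedright{S^{n-2}}{}{\Omega X^{(n-1)}}\) and represent its Hurewicz image in $H_{n-2}(\Omega X^{(n-1)};R)$, via the inductive quasi-isomorphism, by a cycle $z_{\alpha}\in AH(X^{(n-1)})_{n-2}$. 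Adjoin a generator $b_{\alpha}$ of degree $\vert e_{\alpha}\vert-1=n-1$ and set $d(b_{\alpha})=z_{\alpha}$, obtaining $AH(X^{(n)})=T(V^{(n-1)}\oplus\bigoplus_{\alpha}R\langle b_{\alpha}\rangle;\,d)$. This simultaneously establishes parts~(a) and~(b) and records how the differential depends on the attaching maps.

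To extend the comparison map, observe that the composite \(\nameddright{S^{n-2}}{\widetilde{f}_{\alpha}}{\Omega X^{(n-1)}}{}{\Omega X^{(n)}}\) is null-homotopic, because $e_{\alpha}$ is coned off in $X^{(n)}$. Hence the cycle $\theta_{X^{(n-1)}}(z_{\alpha})$ becomes a boundary once pushed into $CU_{\ast}(\Omega X^{(n)})$, and we may pick a chain $c_{\alpha}$ there with $\partial c_{\alpha}=\theta_{X^{(n-1)}}(z_{\alpha})$. Setting $\theta_{X^{(n)}}(b_{\alpha})=c_{\alpha}$ and extending multiplicatively defines the required differential graded algebra map.

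The main obstacle is verifying that $\theta_{X^{(n)}}$ remains a quasi-isomorphism after this extension. The approach is to filter both sides by the skeleta of $X$ and compare associated graded pieces: on the algebra side the model enlarges by a free coproduct $AH(X^{(n-1)})\coprod T(b_{\alpha}\mid\alpha)$, while on the topological side the comparison is driven by the homology Serre spectral sequence of the path-loop fibration \(\nameddright{\Omega X^{(n)}}{}{PX^{(n)}}{}{X^{(n)}}\), together with the James--Milnor description of the loops on a suspension of the attached cells. The simple-connectivity and finite-type hypotheses are what ensure strong convergence of this spectral sequence and permit the degree-by-degree identification of the $E^{1}$-pages, completing the induction.
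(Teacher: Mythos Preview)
The paper does not prove Theorem~\ref{AH}; it is quoted from Adams and Hilton's original paper~\cite{AH} and closed with the $\qqed$ symbol that the authors use for results taken from the literature. So there is no ``paper's own proof'' to compare against---the statement is background.

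Your outline follows the classical Adams--Hilton construction: induct on skeleta, adjoin a free generator $b_{\alpha}$ for each new cell, set $d(b_{\alpha})$ to be a cycle representing the Hurewicz image of the adjoint attaching map, and extend $\theta$ by choosing a bounding chain for that cycle in $CU_{\ast}(\Omega X^{(n)})$. This is the right architecture and correctly realises parts~(a)--(c). The one place where the sketch is thin is the final paragraph: invoking the Serre spectral sequence of the path--loop fibration together with a James--Milnor description is not how the quasi-isomorphism is established in~\cite{AH}, and as written it is not clear what filtration on $CU_{\ast}(\Omega X^{(n)})$ you mean to compare against the word-length filtration on the tensor algebra side, nor why the associated graded pieces match. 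The original argument proceeds by a direct chain-level comparison (essentially identifying $AH(X)$ with a cobar-type construction on the reduced cellular chains of $X$) rather than by a spectral-sequence detour, and if you want a self-contained proof you should either follow that line or make the filtration and the identification of $E^{1}$-pages explicit.
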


Notice that the generators of $AH(X)$ are in one-to-one
correspondence with the cells of $X$, shifted down by one dimension.
However, the differential $d_{V}$ and the quasi-isomorphism
$\theta_{X}$ are not uniquely determined by the $CW$-structure of
$X$. There may be many inequivalent choices of both $d_{V}$
and~$\theta_{X}$ which result in an isomorphism
$\hlgy{AH(X)}\cong\hlgy{\Omega X;R}$. In that sense, there may be
many Adams-Hilton models for $\hlgy{\Omega X;R}$. One would hope to
choose a model which is particularly advantageous. This is what we
aim to do for $X=\djks$ or \djk\ and $R=\mathbb{Q}$, by choosing a
model which keeps track of the Hurewicz images of adjointed higher
Whitehead products.

We start with some general constructions in the case of \djkstwo\
and \djk, producing Adams-Hilton models for both which are compatible
with the inclusion
\(\namedright{S^{2}}{\imath}{\cpinf}\)
of the bottom cell. The model will then be generalized to \djks,
but without the need for an accompanying map.

By definition, for $\sigma=(i_{1},\ldots,i_{k})$, let
$S^{\sigma}=\prod_{j=1}^{k} S^{2}_{i_{j}}$, with the lower index
recording coordinate position, and let $\djkstwo=\bigcup_{\sigma\in
K} S^{\sigma}$. Similarly, regarding $\cpinf$ as $BT$ where
$T=S^{1}$, by definition $\djk=\bigcup_{\sigma\in K} BT^{\sigma}$,
where $BT^{\sigma}=BT_{i_{1}}\times\cdots\times BT_{i_{k}}$, again
with the lower index recording coordinate position. Let
\(\imath^{\sigma}\colon\namedright{S^{\sigma}}{}{BT^{\sigma}}\) be
the product map $\prod_{j=1}^{k}\imath$. Then the map
\(\namedright{\djks}{\djk(\imath)}{\djk}\) is, by definition,
$\bigcup_{\sigma\in K}\imath^{\sigma}$.

Many useful properties of Adams-Hilton models were proved
in~\cite{AH}; a nice summary can be found in~\cite[8.1]{An}. First,
an Adams-Hilton model of a $CW$-subspace can be extended to one for
the whole space. Start with the inclusion of the bottom cell
\(\namedright{S^{2}}{\imath}{\cpinf}\). Then an Adams-Hilton model
for $S^{2}$ can be extended to one for $\cpinf$. Second, an
Adams-Hilton model for a product $AH(X\times Y)$ can be chosen so
that it is quasi-isomorphic to $A(X)\otimes A(Y)$, and this respects
the quasi-isomorphisms $\theta_{X\times Y}$ and
$\theta_{X}\otimes\theta_{Y}$ to the respective cubical singular
chain complexes. In our case, this lets us take the given model
$AH(S^{2})$ for $S^{2}$ and its extension $AH(\cpinf)$ for \cpinf\
and produce a model for $S^{\sigma}$ mapping to $BT^{\sigma}$ which,
up to quasi-isomorphisms, is $AH(S^{2})^{\otimes\sigma}$ mapping
factor-wise to $AH(\cpinf)^{\otimes\sigma}$. Third, Adams-Hilton
models preserve colimits, given coherency conditions. That is, if
$\{X_{\alpha}\}$ is a family of $CW$-subcomplexes of $X$ and
$X=\bigcup_{\alpha} X_{\alpha}$, and there are models
$AH(X_{\alpha})$ satisfying the coherency conditions
$d_{V_{\alpha}}\vert_{AH(X_{\alpha}\cap X_{\beta})}=
      d_{V_{\beta}}\vert_{AH(X_{\alpha}\cap X_{\beta})}$
and
$\theta_{X_{\alpha}}\vert_{AH(X_{\alpha}\cap X_{\beta})}=
      \theta_{X_{\beta}}\vert_{AH(X_{\alpha}\cap X_{\beta})}$
for all pairs $(\alpha,\beta)$, then $\colim_{\alpha} AH(X_{\alpha})$ is
an Adams-Hilton model for $X$. In our case, we have
\(\llnamedright{\djks}{\djk(\imath)}{\djk}\)
equalling, by definition,
\(\lllnamedright{\bigcup_{\sigma\in K} S^{\sigma}}
     {\bigcup_{\sigma\in K}\imath^{\sigma}}{\bigcup_{\sigma\in K} BT^{\sigma}}\).
Notice that the intersection $S^{\sigma_{1}}\cap S^{\sigma_{2}}$ is again
a sub-product, namely $S^{\sigma_{1}\cap\sigma_{2}}$. Similarly,
$BT^{\sigma_{1}}\cap BT^{\sigma_{2}}=BT^{\sigma_{1}\cap\sigma_{2}}$.
Thus the compatibility of Adams-Hilton models with products implies
that the coherency conditions will be satisfied for
$\bigcup_{\sigma\in K} S^{\sigma}$ and $\bigcup_{\sigma\in K} BT^{\sigma}$,
and for the map $\bigcup_{\sigma\in K} i^{\sigma}$. Hence there is a 
commutative diagram 
\begin{equation} 
  \label{colimAHK} 
  \diagram
      AH(\djks)\rto^-{=}\dto^{AH(\djk(\imath))}
         & \colim_{\sigma\in K} AH(S^{\sigma})
             \dto^{\colim_{\sigma\in K} AH(\imath^{\sigma})}  \\
      AH(\djk)\rto^-{=} & \colim_{\sigma\in K} AH(BT^{\sigma}).
  \enddiagram 
\end{equation} 

At this point one would like to say that homology commutes with colimits 
in order to describe $\qhlgy{\Omega\djkstwo}$, say, as 
$\colim_{\sigma\in K}\qhlgy{\Omega S^{\sigma}}$. But there is a problem: 
$AH(\djks)$ is a noncommutative differential graded algebra and in general, 
colimits of such objects do not commute with homology. The point is that 
the colimit does not see higher bracket terms that may arise from the 
interaction of the differential and noncommutativity. Instead, one needs 
to take an appropriate homotopy colimit. However, in the case of directed 
$MF$-complexes, we are able to avoid this problem. 

To see this, consider an analogue of~(\ref{colimAHK}) with respect 
to directed $MF$-complexes and fat wedges. To distinguish fat wedges,
given $\sigma=(i_{1},\ldots,i_{k})$, let $FW(S^{2},\sigma)$ be the
fat wedge of $\prod_{j=1}^{k} S^{2}_{i_{j}}$, where the lower index
refers to coordinate position. Let $FW(\sigma)$ be the fat wedge of
$\prod_{j=1}^{k}\cpinf_{i_{j}}$. Let
\(\imath^{\sigma}\colon\namedright{FW(S^{2},\sigma)}{}{FW(\sigma)}\)
be the map of fat wedges induced by $\imath$. Note that
$FW(S^{2},\sigma)=\bigcup_{\tau\in(\Delta^{k})_{k-1}} S^{\tau}$ and
$FW(\sigma)=\bigcup_{\tau\in(\Delta^{k})_{k-1}} BT^{\tau}$. Suppose
$K$ is a directed $MF$-complex on $n$ vertices. The fact that $K$ 
is an $MF$-complex (missing face complex) implies that 
\[\djks=\bigcup_{\sigma\in MF(K)} FW(S^{2},\sigma)=
     \bigcup_{\sigma\in MF(K)}\bigcup_{\tau\in(\Delta^{k})_{k-1}} S^{\tau}.\] 
Since $K$ is a directed $MF$-complex, then in addition there 
is a sequence of subcomplexes 
$\emptyset\subseteq K_{1}\subseteq\cdots\subseteq K_{l}=K$ 
where $K_{i}=K_{i-1}\cup\partial\sigma_{i}$ and 
$K_{i-1}\cap\partial\sigma_{i}$ is a face common to $K_{i-1}$ 
and $\partial\sigma_{i}$. Suppose this common face is 
$(t_{1},\ldots,t_{l})$. Then topologically there is a (strict) pushout 
\[\diagram 
         \prod_{j=1}^{l} S^{2}_{t_{j}}\rto\dto & DJ_{K_{i-1}}(S^{2})\dto \\ 
         FW(S^{2},\sigma_{i})\rto & DJ_{K_{i}}(S^{2}). 
  \enddiagram\] 
That is, $DJ_{K_{i-1}}(S^{2})$ and $FW(S^{2},\sigma_{i})$ have been 
glued together over the sub-product $\prod_{j=1}^{t} S^{2}_{t_{j}}$ 
of $\prod_{i=1}^{n} S^{2}$. This pushout satisfies the coherency 
conditions for the Adams-Hilton model. The same is true for 
\djk\ and the map $\djk(\iota)$. Thus, in the case of directed  
$MF$-complexes, (\ref{colimAHK}) can be reformulated as a commutative 
diagram 
\begin{equation} 
  \label{colimAHFW} 
  \diagram
      AH(\djks)\rto^-{=}\dto^{AH(\djk(\imath))}
         & \colim_{\sigma\in MF(K)} AH(FW(S^{2},\sigma))
             \dto^{\colim_{\sigma\in MF(K)} AH(\imath^{\sigma})}  \\
      AH(\djk)\rto^-{=} & \colim_{\sigma\in MF(K)} AH(FW(\sigma)). 
  \enddiagram 
\end{equation} 

Since $K$ is a directed $MF$-complex, $K_{i}$ is obtained by gluing $K_{i-1}$ 
and $\partial\sigma_{i}$ along a common face. Thus the Adams-Hilton model 
$AH(DJ_{K_{i}})$ is a free extension of of the differential graded algebras 
$AH(DJ_{K_{i-1}})$ and $AH(FW(\sigma_{i}))$ and consequently, the 
diagram~(\ref{colimAHFW}) is Reedy cofibrant (see~\mbox{\cite[Sections 3,4]{PR}} for a 
discussion). Therefore, by~\cite[Proposition 4.8]{PR} the colimits in~(\ref{colimAHFW}) 
are naturally weakly equivalent to homotopy colimits. As homotopy colimits 
of differential graded algebras commute with homology, we immediately obtain 
the following. 

\begin{proposition}
   \label{MdjkAH}
   Let $K$ be a directed $MF$-complex. There is a commutative diagram of algebras
   \[\diagram
         \qhlgy{\Omega\djkstwo}\rto^-{\cong}\dto^{(\Omega\djk(\imath))_{\ast}}
             & \colim_{\sigma\in MF(K)}\qhlgy{\Omega FW(S^{2},\sigma)}
                  \dto^{\colim_{\sigma\in MF(K)}(\Omega\imath^{\sigma})_{\ast}} \\
         \qhlgy{\Omega\djk}\rto^-{\cong}
             & \colim_{\sigma\in MF(K)}\qhlgy{\Omega FW(\sigma)}
     \enddiagram\]
   $\qqed$
\end{proposition}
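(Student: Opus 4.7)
The plan is to imitate the proof strategy of Proposition~\ref{djkAH}, replacing the natural decomposition $\djks=\bigcup_{\sigma\in K}S^{\sigma}$ by the coarser fat-wedge decomposition available when $K$ is an $MF$-complex. First I would invoke the defining property of an $MF$-complex, $\vert K\vert=\bigcup_{\sigma\in MF(K)}\vert\partial\sigma\vert$, to rewrite
\[\djks=\bigcup_{\sigma\in MF(K)} FW(S^{2},\sigma), \qquad \djk=\bigcup_{\sigma\in MF(K)} FW(\sigma),\]
since each $\vert\partial\sigma\vert$ corresponds on the polyhedral-product side to the fat wedge of $\prod_{j} S^{2}_{i_{j}}$ (respectively $\prod_{j}\cpinf_{i_{j}}$). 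The inclusion $\djk(\imath)$ becomes the union of the coordinate-wise maps $\imath^{\sigma}\colon FW(S^{2},\sigma)\to FW(\sigma)$.

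Next I would verify the coherency conditions needed to realize Adams-Hilton models on a colimit. The critical observation is that for distinct missing faces $\sigma_{1},\sigma_{2}\in MF(K)$, the intersection $FW(S^{2},\sigma_{1})\cap FW(S^{2},\sigma_{2})$ is a sub-product of $\prod_{i=1}^{n} S^{2}_{i}$, and similarly for $FW(\sigma_{1})\cap FW(\sigma_{2})$. Because Adams-Hilton models are multiplicative with respect to products (up to quasi-isomorphism $AH(X\times Y)\simeq AH(X)\otimes AH(Y)$, compatibly with the quasi-isomorphisms to cubical chains), one can choose a single Adams-Hilton model on the ambient product $\prod_{i=1}^{n} S^{2}_{i}$ and restrict it to each fat wedge; the pairwise-compatibility of differentials and of the maps $\theta_{X_{\alpha}}$ on intersections is then automatic. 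The analogous statement holds for $\prod_{i=1}^{n}\cpinf_{i}$, and, because $\imath^{\sigma}$ is a product of factor-wise copies of $\imath$, the Adams-Hilton model of $\imath^{\sigma}$ agrees on intersections with that of $\imath^{\sigma'}$.

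Once the coherency conditions are in hand, I would apply the general colimit property of Adams-Hilton models (recalled in Section~\ref{sec:AdamsHilton}) to conclude that $\colim_{\sigma\in MF(K)} AH(FW(S^{2},\sigma))$ is an Adams-Hilton model for $\djks$, that $\colim_{\sigma\in MF(K)} AH(FW(\sigma))$ is one for $\djk$, and that the maps $AH(\imath^{\sigma})$ assemble to an Adams-Hilton model for $\djk(\imath)$. Passing to homology and using the fact that homology commutes with colimits yields the commutative square in the statement.

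The main obstacle is the compatibility check for the Adams-Hilton models on the overlaps $FW(S^{2},\sigma_{1})\cap FW(S^{2},\sigma_{2})$; all the subsequent steps are formal consequences of the colimit-preservation and product-preservation properties of Adams-Hilton models. Here the $MF$-condition is essential, since it guarantees that the reorganised decomposition has all pairwise intersections living in sub-products, which is precisely what lets a single global choice of Adams-Hilton model on $\prod_{i=1}^{n} S^{2}_{i}$ (and on $\prod_{i=1}^{n}\cpinf_{i}$) restrict coherently to every piece.
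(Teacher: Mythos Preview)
Your proposal is correct and follows essentially the same approach as the paper: rewrite $\djkstwo$ and $\djk$ as unions of fat wedges indexed by $MF(K)$, observe that for distinct $\sigma_{1},\sigma_{2}\in MF(K)$ the intersection $FW(S^{2},\sigma_{1})\cap FW(S^{2},\sigma_{2})$ is a genuine sub-product (since missing faces are incomparable, so $\sigma_{1}\setminus\sigma_{2}$ and $\sigma_{2}\setminus\sigma_{1}$ are both nonempty), and conclude that the coherency conditions for the Adams-Hilton colimit are satisfied. The paper phrases this as a direct reformulation of Proposition~\ref{djkAH} under the reorganised union, but the content is identical to what you wrote.
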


Proposition~\ref{MdjkAH} reduces the problem of calculating
$\qhlgy{\Omega\djkstwo}$ and $\qhlgy{\Omega\djk}$ to that of calculating
the rational homology of looped fat wedges -- with the proviso that
the underlying model for the fat wedges must be compatible with
the inclusion of sub-products. In our case we want more, that the
homology of the looped fat wedges also keeps track of the Hurewicz
images of adjointed higher Whitehead products. We will discuss this
further in the next section.

Observe that the arguments above are equally valid for $\djk(\underline{X})$,
where $\underline{X}=\{X_{1},\ldots,X_{n}\}$. The cases we particularly
care about are $\underline{S}=\{S^{m_{1}+1},\ldots,S^{m_{n}+1}\}$,
with the special case of \djkstwo, and the case of \djk. The
focus is on these cases as they give models which can be explicitly
calculated. We do so for \djks\ in Section~\ref{sec:djksproperties}
and \djk\ in Section~\ref{sec:djkproperties}. For future reference,
we state the case for $\underline{S}$.

\begin{proposition}
   \label{MdjksAH}
   Let $K$ be a directed $MF$-complex. An Adams-Hilton model for \djks\ is
   \[AH(\djks)=\colim_{\sigma\in MF(K)} AH(FW(\sigma))\] 
   and there is an isomorphism 
   \[\hspace{4cm}\qhlgy{\Omega\djks}\cong 
        \colim_{\sigma in MF(K)}\qhlgy{\Omega FW(\sigma)}.\hspace{3cm}\qqed\] 
\end{proposition}

\section{An Adams-Hilton model for $FW(\underline{S})$}
\label{sec:AdamsHiltonFW}

We are aiming for an Adams-Hilton model for $FW(\underline{S})$
over $\mathbb{Q}$ which is compatible with the inclusion of
sub-products, and which keeps track of the Hurewicz images
of adjointed higher Whitehead products. We will obtain one
by using Allday's construction of a minimal Quillen model
for $\pi_{\ast}(\Omega FW(\underline{S}))\otimes\mathbb{Q}$
and then using this to produce an Adams-Hilton model for
$\qhlgy{\Omega FW(\underline{S})}$.

We begin with some general statements which hold for any path-connected
space $X$. Assume from now on that the ground ring $R$ is $\mathbb{Q}$.
Observe that $\pi_{\ast}(X)$ can be given the structure of a graded
Lie algebra  by using the Whitehead product to define the bracket.
Equivalently, by adjointing, $\pi_{\ast}(\Omega X)$ may be given the
structure of a graded Lie algebra by using the Samelson product.
Quillen~\cite{Q} associated to $X$ a free differential graded Lie
algebra $\lambda(X)$ over~$\mathbb{Q}$ with the property that there
is an isomorphism
\(\namedright{\hlgy{\lambda(X)}}{}
    {\pi_{\ast}(\Omega X)\otimes\mathbb{Q}}\).
The free property of~$\lambda(X)$ lets us write it as
$L\langle V;d_{V}\rangle$ for some graded $\mathbb{Q}$-module $V$
and differential $d_{V}$ on $V$. A Quillen model $MQ(X)$ is
\emph{minimal} if the differential has the property that
$d(L\langle V\rangle)\subseteq [L\langle V\rangle,L\langle V\rangle]$.

Allday~\cite{Al} gave an explicit construction of a minimal Quillen
model for $\pi_{\ast}(\Omega FW(\underline{S}))\otimes\mathbb{Q}$.
This is stated in Theorem~\ref{AlldayQuillen} once some notation has
been introduced. The cells of $FW(\underline{S})$ are in one-to-one
correspondence with sequences $(i_{1},\ldots,i_{k})$ where $1\leq
i_{1}<\cdots<i_{k}\leq n$ and $k<n$. The sequence
$(i_{1},\ldots,i_{k})$ corresponds to the top cell of the coordinate
subspace $S^{m_{i_{1}}+1}\times\cdots\times S^{m_{i_{k}}+1}$ within
$FW(\underline{S})$. This cell has dimension $\Sigma_{s=1}^{k}
(m_{i_{s}}+1)$. Note that the condition $k<n$ excludes only one
sequence, $(1,2,\ldots,n)$, corresponding to the top cell of the
product $S^{m_{1}+1}\times\cdots\times S^{m_{n}+1}$. Allday's
minimal Quillen model for $\pi_{\ast}(\Omega
FW(\underline{S}))\otimes\mathbb{Q}$ is of the form
\[MQ(FW(\underline{S}))=L\langle V;d_{V}\rangle\]
where $V$ has one generator $b_{I}$ for each sequence
$I=(i_{1},\ldots,i_{k})$ with $1\leq i_{1}<\cdots<i_{k}\leq n$
and $k<n$, and the degree of $b_{I}$ is
$(\Sigma_{s=1}^{k} (m_{i_{s}}+1))-1$. 
Similarly, his minimal Quillen model for
$\pi_{\ast}(\Omega\prod_{i=1}^{n} S^{m_{i}+1})\otimes\mathbb{Q}$
is of the form
\[MQ(\prod_{i=1}^{n} S^{m_{i}+1})=L\langle W;d_{W}\rangle\]
where $W$ has one generator $b_{I}$ for each sequence
$I=(i_{1},\ldots,i_{k})$ with $1\leq i_{1}<\cdots<i_{k}\leq n$,
and the degree of $b_{I}$ is $(\Sigma_{s=1}^{k} (m_{i_{s}}+1))-1$.

To describe the differentials $d_{V}$ and $d_{W}$, fix a sequence
$I=(i_{1},\ldots,i_{k})$ where $1\leq i_{1}<\cdots<i_{k}\leq n$ and
$k\geq 2$. If $k<n$ this corresponds to a generator $b_{I}$ of $V$,
and if $k\leq n$ this corresponds to a generator $b_{I}$ of $W$. In
either case, the degree of $b_{I}$ is $\vert
b_{I}\vert=(\Sigma_{s=1}^{k} m_{i_{s}}+1)-1$. Let $\mathcal{S}_{I}$
be the collection of all shuffles $(J,J^{\prime})$ of
$\{i_1,\ldots,i_k\}$ with the property that $j_{1}=1$ (known as a
type II shuffle relative to 1). If $(J,J^{\prime})$ is an
$(r,s)$-shuffle of $\{1,\dots,k\}$, let
$\epsilon(J,J^{\prime})\in\{0,1\}$ be the number determined by the
equation $z_{i_{1}}\cdots z_{i_{k}}=(-1)^{\epsilon(J,J^{\prime})}
    z_{j_{1}}\cdots z_{j_{r}}z_{j^{\prime}_{1}}\cdots z_{j^{\prime}_{s}}$
in the graded rational symmetric algebra generated by
$z_{i_{1}},\ldots,z_{i_{k}}$ with $\vert z_{i_{t}}\vert=m_{i_{t}}+1$
for $1\leq t\leq k$. Let
\[a_{I}=-\displaystyle\Sigma_{(J,J^{\prime})\in\mathcal{S_{I}}}
         (-1)^{\vert b_{I}\vert+\epsilon(J,J^{\prime})}[b_{J},b_{J^{\prime}}].\]
As special cases, let $b=b_{(1,\ldots,n)}$ and $a=a_{(1,\ldots,n)}$.

\begin{theorem}
   \label{AlldayQuillen}
   With $V$ and $W$ as defined above, minimal Quillen models
   $L\langle V;d_{V}\rangle$ and $L\langle W;d_{W}\rangle$ for
   $\pi_{\ast}(\Omega FW(\underline{S}))\otimes\mathbb{Q}$
   and $\pi_{\ast}(\prod_{i=1}^{n}\Omega S^{m_{i}+1})\otimes\mathbb{Q}$
   can be chosen to satisfy the following properties:
   \begin{letterlist}
      \item $W=V\oplus\{b\}$;
      \item $d_{V}(b_{I})=0$ if $I=(i)$ for $1\leq i\leq n$;
      \item $d_{V}(b_{I})=a_{I}$ for $I=(i_{1},\ldots,i_{k})$ with $2\leq k<n$;
      \item $d_{W}$ restricted to $V$ is $d_{V}$;
      \item $d_{W}(b)=a$;
      \item the adjoint of the higher order Whitehead product
            \(\namedright{S^{\vert b\vert -1}}{\phi_{n}}{FW(\underline{S})}\)
            which attaches the top cell to the product
            $\prod_{i=1}^{n} S^{m_{i}+1}$ is homotopic to $a$.
   \end{letterlist}
   $\qqed$
\end{theorem}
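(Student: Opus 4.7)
The plan is twofold: first produce the minimal Quillen model $L\langle W; d_W\rangle$ for the product $\prod_{i=1}^n S^{m_i+1}$, and then obtain the fat wedge model $L\langle V; d_V\rangle$ by stripping off the single generator $b$ corresponding to the top cell. The link between the two is the homotopy cofibration
\[\nameddright{\Sigma^{n-1} S^{m_1}\wedge\cdots\wedge S^{m_n}}{\phi_n}{FW(\underline{S})}{}{\prod_{i=1}^n S^{m_i+1}}\]
together with the general principle that attaching a single cell to a space $Y$ along a map $f$ corresponds, at the level of minimal Quillen models, to adjoining one generator whose differential represents the class of the adjoint of $f$. Thus $W = V \oplus \{b\}$ with $d_W(b) = a$, which immediately accounts for parts (a), (d), (e), and (f) once the product model is in hand.

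To construct the product model, I would exploit the fact that rationally $\pi_{*}(\Omega \prod S^{m_i+1})$ is the direct sum of the Lie algebras $\pi_{*}(\Omega S^{m_i+1})\otimes\mathbb{Q}$, with all brackets between generators from different sphere factors vanishing. A natural free DGL realizing this abelianization takes one fundamental generator $b_i$ of degree $m_i$ with $d(b_i) = 0$ for each sphere, together with one obstruction generator $b_I$ of degree $(\sum_{s=1}^{k} m_{i_s}+1)-1$ for each subsequence $I = (i_1, \ldots, i_k)$ with $k \geq 2$, whose differential is the shuffle sum
\[a_I = -\sum_{(J,J')\in\mathcal{S}_I}(-1)^{|b_I|+\epsilon(J,J')}[b_J, b_{J'}].\]
The restriction to type-II shuffles prevents double counting coming from the graded antisymmetry of the bracket, and the sign $\epsilon(J, J')$ is forced by reordering the monomial $z_{i_1}\cdots z_{i_k}$ in the graded-commutative algebra on symbols $z_{i_s}$ of degree $m_{i_s}+1$. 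Intuitively, each $b_I$ kills the iterated bracket $[b_{i_1},\ldots, b_{i_k}]$ up to Jacobi rearrangement, enforcing that the homology Lie algebra be the required product.

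Three verifications then complete the argument. First, $d_W^2 = 0$ reduces to a combinatorial identity on shuffles of shuffles, which unpacks as the graded Jacobi identity together with the symmetric-algebra relation defining $\epsilon$. Second, $H_*(L\langle W; d_W\rangle)$ must match $\pi_{*}(\Omega \prod S^{m_i+1})\otimes\mathbb{Q}$, which I would establish by induction on $n$, comparing with the base case of two spheres (where the model is classical) and using the compatibility of Quillen models with inclusions of coordinate sub-products. Third, minimality $d_W(L\langle W\rangle) \subseteq [L\langle W\rangle, L\langle W\rangle]$ is immediate from the bracket form of $a_I$. The fat wedge model $L\langle V; d_V\rangle$ inherits its differential by restriction, and properties (b), (c) follow at once.

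The main obstacle is twofold. The delicate part is the sign bookkeeping in verifying $d^{2} = 0$ on the shuffle sums: pairs of type-II shuffles must be matched so that, after expansion of each interior bracket by Jacobi, the corresponding signs cancel, and this cancellation hinges on the precise definition of $\epsilon$. The subtler part is property (f), identifying the class $[a]$ with the adjoint of $\phi_n$ on the nose rather than merely up to a nonzero rational scalar. This is handled by normalizing on the base case $n=2$, where $a = -[b_1, b_2]$ matches the standard Samelson product adjoint to the Whitehead product $\phi_2$, and then extending inductively using the naturality of higher Whitehead products with respect to coordinate projections, which pins down the normalization uniquely.
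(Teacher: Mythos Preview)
The paper does not actually prove this theorem: it is quoted from Allday~\cite{Al} and closed with a bare $\qqed$, so there is no in-paper argument to compare your proposal against. Your outline is a reasonable sketch of how Allday's result can be established, and in broad strokes it matches the standard approach---build a free DGL whose generators correspond to the cells of the product, with the shuffle differential $a_I$ killing the unwanted brackets, then use the cofibration $\Sigma^{n-1}S^{m_1}\wedge\cdots\wedge S^{m_n}\to FW(\underline{S})\to\prod S^{m_i+1}$ to relate the two models.

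Two points are worth tightening. First, your logical direction is slightly inverted: the cell-attachment principle for Quillen models runs from the subcomplex to the complex, so the clean statement is that one builds $L\langle V;d_V\rangle$ for $FW(\underline{S})$ first (inductively over the skeletal filtration, where each new cell $e_I$ contributes a generator $b_I$ with $d(b_I)$ a cycle representing the adjoint of its attaching map), and then adjoins $b$ with $d_W(b)=a$ to obtain the product model. Starting from the product and ``stripping off'' $b$ requires a separate argument that the sub-DGL on $V$ really models the subcomplex $FW(\underline{S})$, which is true but not automatic. Second, your handling of part~(f) is the genuinely delicate step and your inductive normalization via coordinate projections is too vague as stated: projecting $\prod_{i=1}^n S^{m_i+1}$ onto a proper sub-product kills $\phi_n$, so naturality under such projections gives no information about the class of $\phi_n$ itself. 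What Allday actually does is identify $a$ directly as the image of the adjoint of $\phi_n$ under the explicit map $\alpha\colon L\langle V\rangle\to\pi_*(\Omega FW(\underline{S}))\otimes\mathbb{Q}$ that sends each $b_I$ to the adjoint of the corresponding sub-product attaching map; the identification then comes from analyzing the boundary in the long exact homotopy sequence of the pair $(\prod S^{m_i+1},FW(\underline{S}))$.
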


There is an explicit map
\(\alpha\colon\namedright{L\langle V\rangle}{}
    {\pi_{\ast}(\Omega FW(\underline{S}))}\).
Let $b_{I}\in V$ for $I=(i_{1},\ldots,i_{k})$. This corresponds to the
top cell of the coordinate subspace
$S^{m_{i_{1}}+1}\times\cdots\times S^{m_{i_{k}}+1}$ in
$FW(\underline{S})$. Let $FW(i_{1},\ldots,i_{k})$ be the fat wedge
in $S^{m_{i_{1}}+1}\times\cdots\times S^{m_{i_{k}}+1}$. Let $\alpha(b_{I})$
be the adjoint of the composite
\(\namedddright{S^{\vert b_{I}\vert-1}}{\phi_{k}}{FW(i_{1},\ldots,i_{k})}
    {}{\prod_{j=1}^{k} S^{m_{i_{j}}+1}}{}{FW(\underline{S})}\),
where the latter two maps are the inclusions. Now extend $\alpha$ to
$L\langle V\rangle$ by using the fact that $\pi_{\ast}(\Omega
FW(\underline{S}))\otimes\mathbb{Q}$ is a Lie algebra under the
Samelson product. Allday's statement that $L\langle V;d_{V}\rangle$
is a minimal Quillen model for $\pi_{\ast}(\Omega
FW(\underline{S})\otimes\mathbb{Q}$ says two things: first, that
$\alpha$ can be upgraded from a map of Lie algebras to a map of
differential graded Lie algebras, where the differential on
$\pi_{\ast}(\Omega FW(\underline{S}))\otimes\mathbb{Q}$ is zero, and
second, that this upgraded map induces an isomorphism in homology. A
similar construction can be made with respect to $\prod_{i=1}^{n}
S^{m_{i}+1}$.

We now pass from a minimal Quillen model to an Adams-Hilton model.
In general, observe that the Hurewicz homomorphism
\(\namedright{\pi_{\ast}(\Omega X)\otimes\mathbb{Q}}{}{\qhlgy{\Omega X}}\)
factors as the composite
\(\nameddright{\pi_{\ast}(\Omega X)\otimes\mathbb{Q}}{c}{CU_{\ast}(\Omega X)}
    {h}{\qhlgy{\Omega X}}\),
where $CU_{\ast}(\Omega X)$ is the cubical singular chain complex
with coefficients in~$\mathbb{Q}$, $c$~is the canonical map to the
cubical singular chains, and $h$ is the quotient map to the homology of the
chain complex. Note that $h$ is an algebra map. Let $MQ(X)$ be a minimal Quillen 
model for $\pi_{\ast}(\Omega X)\otimes\mathbb{Q}$, and suppose there is
an associated map of differential graded Lie algebras
\(\alpha\colon\namedright{MQ(X)=L\langle V_{X};d_{V_{X}}\rangle}{}
    {\pi_{\ast}(\Omega X)\otimes\mathbb{Q}}\)
which induces an isomorphism in homology. Since
$CU_{\ast}(\Omega X)$ is a differential graded algebra, the
composite $c\circ\alpha$ extends to a map
\(\theta_{X}\colon\namedright{UL\langle V_{X};d_{V_{X}}\rangle}{}
     {CU_{\ast}(\Omega X)}\)
of differential graded algebras. Thus there is a commutative diagram
\begin{equation}
  \label{quismconstruction}
  \diagram
    L\langle V_{X};d_{V_{X}}\rangle\rto^-{i}\dto^{\alpha}
       & UL\langle V_{X};d_{V_{X}}\rangle\dto^{\theta_{X}} & \\
   \pi_{\ast}(\Omega X)\otimes\mathbb{Q}\rto^-{c}
       & CU_{\ast}(\Omega X)\rto^-{h}
       & \qhlgy{\Omega X}
  \enddiagram
\end{equation}
where $i$ is the inclusion. By Milnor-Moore~\cite{MM}, regarding
$\pi_{\ast}(\Omega X)\otimes\mathbb{Q}$ as a Lie algebra, we have
$\qhlgy{\Omega X}\cong U(\pi_{\ast}(\Omega X)\otimes\mathbb{Q})$,
with the isomorphism induced by the Hurewicz homomorphism. On the
other hand, $h$ is a map of differential graded algebras once
$\qhlgy{\Omega X}$ has been given the zero differential. Thus
$h\circ\theta_{X}$ is a map of differential graded algebras, and
therefore it is determined by its restriction to the generating set
$V$. The commutativity of the diagram implies that
$h\circ\theta_{X}\vert_{V}=h\circ c\circ\alpha\vert_{V}=:q$. Thus
$h\circ\theta_{X}=U(q)$, implying that $h\circ\theta_{X}$ induces an
isomorphism in rational homology. Hence $UL\langle
V_{X};d_{V_{X}}\rangle$ together with the quasi-isomorphism
$\theta_{X}$ is an Adams-Hilton model for $X$.

In our case, we obtain Adams-Hilton models $(UL\langle
V;d_{V}\rangle,\theta_{FW})$ and $(UL\langle
W;d_{W}\rangle,\theta_{\prod})$ for $FW(\underline{S})$ and
$\prod_{i=1}^{n} S^{m_{i}+1}$, respectively.
Theorem~\ref{AlldayQuillen} therefore implies the following.

\begin{theorem}
   \label{Allday}
   The Adams-Hilton models
   $AH(FW(\underline{S}))=(UL\langle V;d_{V}\rangle,\theta_{FW})$
   and $AH(\prod_{i=1}^{n} S^{m_{i}+1})=(UL\langle W;d_{W}\rangle,\theta_{\prod})$
   have the following properties:
   \begin{letterlist}
      \item $W=V\oplus\{b\}$;
      \item $d_{V}(b_{I})=0$ if $I=(i)$ for $1\leq i\leq n$;
      \item $d_{V}(b_{I})=a_{I}$ for $I=(i_{1},\ldots,i_{k})$ with $2\leq k<n$;
      \item $d_{W}$ restricted to $AH(FW(\underline{S}))$ is $d_{V}$;
      \item $d_{W}(b)=a$;
      \item the adjoint of the higher order Whitehead product
            \(\namedright{S^{\vert b\vert -1}}{\phi_{n}}{FW(\underline{S})}\)
            which attaches the top cell to the product
            $\prod_{i=1}^{n} S^{m_{i}+1}$ has ``Hurewicz" image $a$.
   \end{letterlist}
   $\qqed$
\end{theorem}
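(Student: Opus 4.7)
The plan is to deduce Theorem~\ref{Allday} from Theorem~\ref{AlldayQuillen} by applying the general passage from minimal Quillen models to Adams--Hilton models formalized in diagram~(\ref{quismconstruction}). Most of the necessary machinery has already been built up in the paragraphs preceding the theorem statement; what remains is to assemble the pieces and verify property~(f) about the Hurewicz image.

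First I would take the explicit map $\alpha\colon\namedright{L\langle V\rangle}{}{\pi_{\ast}(\Omega FW(\underline{S}))\otimes\mathbb{Q}}$ defined right after Theorem~\ref{AlldayQuillen}, which sends each generator $b_{I}$ (for $I=(i_{1},\ldots,i_{k})$ with $k<n$) to the adjoint of the composite $\namedddright{S^{\vert b_{I}\vert-1}}{\phi_{k}}{FW(i_{1},\ldots,i_{k})}{}{\prod_{j=1}^{k} S^{m_{i_{j}}+1}}{}{FW(\underline{S})}$ and is extended to $L\langle V\rangle$ by Samelson products. By Allday's theorem this is a quasi-isomorphism of differential graded Lie algebras. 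Following the construction laid out in diagram~(\ref{quismconstruction}), composing with the canonical map $c$ to cubical chains and extending multiplicatively via the universal enveloping algebra functor yields a map of differential graded algebras $\theta_{FW}\colon\namedright{UL\langle V;d_{V}\rangle}{}{CU_{\ast}(\Omega FW(\underline{S}))}$. The general argument given right after~(\ref{quismconstruction}), combined with the Milnor--Moore isomorphism $\qhlgy{\Omega FW(\underline{S})}\cong U(\pi_{\ast}(\Omega FW(\underline{S}))\otimes\mathbb{Q})$, shows that $\theta_{FW}$ is a quasi-isomorphism, so $(UL\langle V;d_{V}\rangle,\theta_{FW})$ is a genuine Adams--Hilton model for $FW(\underline{S})$. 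The same procedure applied to $\prod_{i=1}^{n} S^{m_{i}+1}$ produces $\theta_{\prod}$.

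Properties (a)--(e) then transfer directly from Theorem~\ref{AlldayQuillen}, since passing from $L\langle\cdot\rangle$ to $UL\langle\cdot\rangle$ preserves the generating modules and the differentials on generators, and the inclusion $AH(FW(\underline{S}))\hookrightarrow AH(\prod_{i=1}^{n} S^{m_{i}+1})$ is induced by the inclusion $V\hookrightarrow W$. The main subtlety, and the only genuinely new content, is property~(f). Theorem~\ref{AlldayQuillen}(f) identifies the adjoint of $\phi_{n}$ with the element $a\in L\langle V\rangle$ under $\alpha$, so I would next check that $a$ is a cycle in $L\langle V;d_{V}\rangle$: this holds because $d_{W}^{2}(b)=d_{W}(a)=0$ in $L\langle W;d_{W}\rangle$ and $d_{W}\vert_{L\langle V\rangle}=d_{V}$. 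The commutativity of~(\ref{quismconstruction}) then identifies the image of $a$ under the composite $\namedddright{L\langle V\rangle}{i}{UL\langle V\rangle}{\theta_{FW}}{CU_{\ast}(\Omega FW(\underline{S}))}{h}{\qhlgy{\Omega FW(\underline{S})}}$ with $h\circ c(\adj(\phi_{n}))$, which is precisely the Hurewicz image of the adjoint of $\phi_{n}$, establishing~(f) and completing the proof.
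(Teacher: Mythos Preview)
Your proposal is correct and follows essentially the same approach as the paper: the theorem is stated with a $\qqed$ because it is meant to follow immediately from the general passage, via diagram~(\ref{quismconstruction}) and Milnor--Moore, from Allday's minimal Quillen model in Theorem~\ref{AlldayQuillen} to an Adams--Hilton model. Your added verification that $a$ is a cycle (from $d_{W}^{2}(b)=0$) and your explicit tracing of the Hurewicz image through~(\ref{quismconstruction}) are a bit more careful than the paper's treatment, but the route is the same.
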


\begin{remark}
\label{Alldayremark}
The inductive definition of the differential $d_{V}$ in
the minimal Quillen model $L\langle V;d_{V}\rangle$ for
$FW(\underline{S})$ in Theorem~\ref{Allday} implies
that the differential is compatible with the inclusion
of sub-products. The same is therefore true in
$UL\langle V;d_{V}\rangle$. Moreover, the differential $d_{V}$
is what turns the map $\alpha$ into a map of differential
graded Lie algebras, and so both $\alpha$ and its extension to
the quasi-isomorphism $\theta_{FW}$ are compatible with
the inclusion of sub-products.
\end{remark}

As well as the inductive nature of the Adams-Hilton model,
Theorem~\ref{Allday} also explicitly identifies the ``Hurewicz" image
of the adjoint of the higher order Whitehead product~$\phi_{n}$. We put
Hurewicz in quotes as this image is an element in an Adams-Hilton
model, whereas the honest Hurewicz image is obtained after taking homology.
That is, $a$ is a cycle in $AH(FW(\underline{S}))$, which could also be
a boundary. This turns out not to be the case. Observe that there is
a sequence of isomorphisms
$\hlgy{AH(FW(\underline{S}))}\cong\hlgy{UL\langle V;d_{V}\rangle}\cong
    U(\hlgy{L\langle V;d_{V}\rangle})$,
since homology commutes with the universal enveloping algebra
functor. To calculate $\hlgy{L\langle V;d_{V}\rangle}$ we proceed
exactly as in~\cite{B}, where Bubenik used separated Lie models to
elegantly obtain the answer. This is stated in Theorem~\ref{Bubenik}
in terms of the universal enveloping algebra rather than the Lie
algebra as we are ultimately after $\qhlgy{\Omega
FW(\underline{S})}$. To state the result we need to introduce more
notation. For $1\leq i\leq n$, let $b_{i}$ be the generator in $V$
(or $W$) corresponding to the sequence $I=(i)$. That is, $b_{i}$
corresponds to the cell $S^{m_{i}+1}$ in
$S^{m_{1}+1}\times\cdots\times S^{m_{n}+1}$. Let
$N=(\Sigma_{i=1}^{k} m_{i}+1)-2$. Recall from the Introduction that, 
$L_{ds}\langle b_{1},\ldots,b_{n}\rangle=\oplus_{i=1}^{n} L\langle b_{i}\rangle$.  
Observe that
$\hlgy{AH(\prod_{j=1}^{k} S^{m_{j}+1})}\cong
   UL_{ds}\langle b_{1},\ldots,b_{n}\rangle$.

\begin{theorem}
   \label{Bubenik}
   For $n\geq 3$, there are algebra isomorphisms
   \[\qhlgy{\Omega FW(\underline{S})}\cong\hlgy{AH(FW(\underline{S}))}\cong
        U(L_{ds}\langle b_{1},\ldots,b_{n}\rangle\textstyle\coprod
        L\langle u\rangle)\]
   where $u$, of degree $N$, is the Hurewicz image of the adjoint
   of the higher Whitehead product~$\phi_{n}$. Further, the looped inclusion
   \(\namedright{\Omega FW(\underline{S})}{}
       {\prod_{i=1}^{n} \Omega S^{m_{i}+1}}\)
   is modelled by the map
   \[\namedright{U(L_{ds}\langle b_{1},\ldots,b_{n}\rangle\textstyle\coprod
        L\langle u\rangle)}{U(\pi)}{UL_{ds}\langle b_{1},\ldots,b_{n}\rangle}\]
   where $\pi$ is the projection.~$\qqed$
\end{theorem}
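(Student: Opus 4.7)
The first isomorphism $\qhlgy{\Omega FW(\underline{S})}\cong\hlgy{AH(FW(\underline{S}))}$ is the defining property of an Adams--Hilton model (Theorem~\ref{AH}(c)), so the content of the statement is the identification of $\hlgy{AH(FW(\underline{S}))}$ as an algebra. By Theorem~\ref{Allday} we have $AH(FW(\underline{S})) = UL\langle V;d_V\rangle$, and over $\mathbb{Q}$ the universal enveloping algebra functor is exact on differential graded Lie algebras; equivalently, PBW gives a natural identification $\hlgy{UL\langle V;d_V\rangle}\cong U\bigl(\hlgy{L\langle V;d_V\rangle}\bigr)$. The problem is therefore reduced to computing the Lie algebra $\hlgy{L\langle V;d_V\rangle}$.

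My plan for this computation is to apply Bubenik's separated Lie model strategy from~\cite{B}, exactly as advertised in the paragraph preceding the theorem. The qualitative picture already follows from the explicit form of $d_V$ recorded in Theorem~\ref{Allday}: the generators $b_i$ are cycles, each $b_I$ with $2\leq|I|<n$ has $d_V(b_I) = a_I$, and a routine induction on $|I|$ using the shuffle formula for $a_I$ and the graded Jacobi identity shows that each $a_I$ is itself a cycle. The top element $a = a_{(1,\ldots,n)}$ is also a cycle, but unlike the $a_I$ with $|I|<n$ it is not a boundary, because the generator $b$ needed to trivialise it in the product model is absent from $V$. Bubenik's machinery of separated models packages this into an explicit acyclic pairing of the $b_I$ for $2\leq|I|<n$ with their images $a_I$; after collapsing the contractible pieces along this pairing one is left with the sub-Lie algebra generated by $b_1,\ldots,b_n$ and $u := [a]$, subject only to the relations $[b_i,b_j] = 0$ forced by the boundaries $a_{(i,j)}$. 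This yields $\hlgy{L\langle V;d_V\rangle}\cong L_{ab}\langle b_1,\ldots,b_n\rangle\coprod L\langle u\rangle$; applying $U$ gives the stated algebra isomorphism.

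The remaining assertions follow with little extra work. The identification of $u$ as the Hurewicz image of the adjoint of~$\phi_n$ is immediate from Theorem~\ref{Allday}(f), which states that $a$ is the Adams--Hilton representative of that adjoint, and $u$ is by construction the homology class of $a$. For the looped inclusion into $\prod_{i=1}^{n}\Omega S^{m_{i}+1}$, one repeats the Bubenik computation for the product model $L\langle W;d_W\rangle = L\langle V\oplus\{b\};d_W\rangle$: adjoining the generator $b$ with $d_W(b) = a$ makes $u$ into a boundary, which kills precisely the factor $L\langle u\rangle$ of the coproduct. The inclusion of Lie models $L\langle V;d_V\rangle\hookrightarrow L\langle W;d_W\rangle$ therefore induces on homology the projection $\pi$, and after applying $U$ one obtains the asserted description via $U(\pi)$.

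I expect the principal technical hurdle to lie in the Bubenik-style computation of paragraph two: the elements $a_I$ are intricate signed sums of brackets indexed by type~II shuffles, and one must verify carefully that after cancelling the acyclic pairs no spurious homology classes survive and no relations beyond $[b_i,b_j] = 0$ appear. The subtle interplay of the graded Jacobi identity with the shuffle signs defining $a_I$ is what makes this bookkeeping delicate, and it is precisely here that the separated Lie model formalism of~\cite{B} does the heavy lifting; the remaining task would be to confirm that the hypotheses of Bubenik's main theorem apply to $L\langle V;d_V\rangle$ and then to invoke his result.
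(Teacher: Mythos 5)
Your proposal follows essentially the same route as the paper: the paper likewise reduces via $\hlgy{AH(FW(\underline{S}))}\cong\hlgy{UL\langle V;d_{V}\rangle}\cong U(\hlgy{L\langle V;d_{V}\rangle})$ and then computes $\hlgy{L\langle V;d_{V}\rangle}$ by appealing to Bubenik's separated Lie models~\cite{B} (with the answer going back to Lemaire~\cite{L}), reading off the Hurewicz image from Theorem~\ref{Allday}(f) and the model of the looped inclusion from the comparison of the $V$- and $W$-models. So your attempt is correct and matches the paper's argument, including its deferral of the delicate cancellation bookkeeping to Bubenik's theorem.
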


Note that the calculation of $\qhlgy{\Omega FW(\underline{S})}$ is not new,
it was originally done by Lemaire~\cite{L}. What is new and important to keep in
mind about Theorem~\ref{Bubenik} is that the calculation also keeps track
of the Hurewicz image of the adjointed higher Whitehead product $\phi_{n}$.

\begin{remark}
   \label{Bubremark}
   When $n=2$, we have $FW(\underline{S})=S^{m_{1}+1}\vee S^{m_{2}+1}$ and
   then it is well known that
   $\qhlgy{\Omega FW(\underline{S})}\cong\qhlgy{\Omega(S^{m_{1}+1}\vee S^{m_{2}+1})}
       \cong UL\langle b_{1},b_{2}\rangle$.
   In this case $\phi_{2}$ is the ordinary Whitehead product and its adjoint
   has Hurewicz image $u=[b_{1},b_{2}]$. In this case we can regard
   $L\langle b_{1},b_{2}\rangle$ as
   $L_{ds}\langle b_{1},b_{2}\rangle\textstyle\coprod L\langle u\rangle$,
   modulo Jacobi identities on brackets of the form
   $[u,-]=[[b_{1},b_{2}],-]$.
\end{remark}

\section{Properties of $\Omega\djks$ and $\Omega\zks$ for
        directed $MF$-complexes}
\label{sec:djksproperties}

In this section we explicitly calculate $\qhlgy{\Omega\djks}$
when $K$ is a directed $MF$-complex, proving Theorem~\ref{ULcolim}.
This is then used in tandem with the
loops on the homotopy fibration
\(\nameddright{\zks}{f}{\djks}{g}{\prod_{i=1}^{n} S^{m_{i}+1}}\)
to calculate $\qhlgy{\Omega\zks}$. We then give a homotopy
decomposition of \zks\ as a wedge of spheres and describe the map
\(\namedright{\zks}{}{\djks}\)
in terms of higher Whitehead products and iterated Whitehead products,
proving Theorem~\ref{djksmaps}.

\begin{remark} 
\label{simplifyremark} 
To simplify the presentation, for the remainder of
Sections~\ref{sec:djksproperties} and~\ref{sec:djkproperties} we will 
assume that the given directed $MF$-complex $K$ has the property that
$\vert\sigma\vert>1$ for every $\sigma\in MF(K)$. This is to appeal
directly to Theorem~\ref{Bubenik}. If $\vert\sigma\vert=1$ for
some $\sigma=(i_{1},i_{2})\in MF(K)$, then the calculations can be
modified by regarding $L\langle b_{i_{1}},b_{i_{2}}\rangle$ as
$L_{ds}\langle b_{i_{1}},b_{i_{2}}\rangle\textstyle\coprod L\langle u\rangle$
for $u=[b_{i_{1}},b_{i_{2}}]$ as in Remark~\ref{Bubremark}, and by
introducing the ideal $J$ discussed in the Introduction.
\end{remark} 

We begin by calculating $\qhlgy{\Omega\djks}$ using the Adams-Hilton
model \[AH(\djks)=\colim_{\sigma\in MF(K)} AH(FW(\sigma))\] in
Proposition~\ref{MdjksAH}. Let $b_{1},\ldots,b_{n}$ be the
generators in $AH(\djks)$ corresponding to the cells
$S^{m_{1}+1},\ldots,S^{m_{n}+1}$ respectively. For
$\sigma=(i_{1},\ldots,i_{k})\in K$, observe that
$\{b_{i_{1}},\ldots,b_{i_{k}}\}$ corresponds to the cells
$S^{m_{i}+1}$ which are in $FW(\sigma)$. Let
$N_{\sigma}=(\Sigma_{j=1}^{k} m_{i_{j}}+1)-2$. By
Theorem~\ref{Bubenik}, we have $\hlgy{AH(FW(\sigma))}\cong
   U(L_{ds}\langle b_{i_{1}},\ldots,b_{i_{k}}\rangle\coprod L\langle u_{\sigma}\rangle)$
where~$u_{\sigma}$ is the Hurewicz image of the adjoint of a higher
Whitehead product
\(\namedright{S^{N_{\sigma}+1}}{}{FW(\sigma)}\). 

We now prove Theorem~\ref{ULcolim}, restated below to match the 
simplifying assumption in Remark~\ref{simplifyremark}. 

\begin{theorem} 
   \label{ULcolim2} 
   Let $K$ be a directed $MF$-complex such that $\vert\sigma\vert>1$ 
   for every $\sigma\in MF(K)$. There is an algebra isomorphism
   \[\qhlgy{\Omega\djks}\cong U(L_{ds}\langle b_{1},\ldots,b_{n}\rangle
         \textstyle\coprod L\langle u_{\sigma}\mid\sigma\in MF(K)\rangle)\]
   where each $u_{\sigma}$ is the Hurewicz image of the adjoint of a
   higher Whitehead product. Further, the loop map 
   \(\namedright{\Omega\djks}{}{\prod_{i=1}^{n}\Omega S^{m_{i}+1}}\)
   is modelled by the map
   \[\namedright{U(L_{ds}\langle b_{1},\ldots,b_{n}\rangle\textstyle\coprod
        L\langle u_{\sigma}\mid\sigma\in MF(K)\rangle)/J}
       {U(\pi)}{UL_{ds}\langle b_{1},\ldots,b_{n}\rangle}\]
   where $\pi$ is the projection.
\end{theorem}

\begin{proof} 
Consider the string of isomorphisms
\[\begin{array}{lcl}
     \qhlgy{\Omega\djks} & \cong & \hlgy{AH(\djks} \\
        & \cong & \colim_{\sigma\in MF(K)}\hlgy{AH(FW(\sigma))} \\
        & \cong & \colim_{\sigma\in MF(K)}
             U(L_{ds}\langle b_{i_{1}},\ldots,b_{i_{k}}\rangle\coprod
                 L\langle u_{\sigma}\rangle) \\
        & \cong & U(\colim_{\sigma\in MF(K)}
             L_{ds}\langle b_{i_{1}},\ldots,b_{i_{k}}\rangle\coprod
                 L\langle u_{\sigma}\rangle) \\
        & \cong & U(L_{ds}\langle b_{1},\ldots,b_{n}\rangle\coprod
             L\langle u_{\sigma}\mid\sigma\in MF(K)\rangle).
  \end{array}\]
The first isomorphism holds because $AH(\djks)$ is an
Adams-Hilton model. The second isomorphism holds because
by Proposition~\ref{MdjksAH}. The third isomorphism holds by 
Theorem~\ref{Bubenik}. For the fourth isomorphism, Remark~\ref{Alldayremark} 
implies that in the case of directed $MF$-complexes (when the missing 
faces are glued along common faces, which topologically correspond 
to sub-products in \djks) the calculation of 
$\hlgy{AH(FW(\sigma))}\cong U(L_{ds}\langle b_{i_{1}},\ldots,b_{i_{k}}\rangle)$
is compatible with the inclusion of sub-products. Therefore both
the underlying Lie algebra and its universal enveloping algebra
respect the colimit over $MF(K)$, and so the fourth isomorphism holds. 
Provided the fifth isomorphism holds, the string of isomorphisms establishes 
the ismorphism asserted by the theorem. 
The statement regarding Hurewicz images now follows from that 
in Theorem~\ref{Bubenik}. The statement regarding the model for the looped map 
\(\namedright{\Omega\djks}{}{\prod_{i=1}^{n}\Omega S^{m_{i}+1}}\) 
follows again from Remark~\ref{Alldayremark} regarding the compatibility
of the colimit with the inclusion of sub-products.

It remains to establish the fifth isomorphism in the string of isomorphisms 
above. This is really a statement about Lie algebras, so we will show that 
\[\mbox{colim}_{\sigma\in MF(K)} L_{ds}\langle b_{i_{1}},\ldots,b_{i_{k}}\rangle 
    \coprod L\langle u_{\sigma}\rangle\cong 
   L_{ds}\langle b_{1},\ldots,b_{n}\rangle\coprod L\langle u_{\sigma}\vert 
      \sigma\in MF(K)\rangle.\] 
Since the set $MF(K)$ of minimal missing faces of $K$ is finite, the colimit 
can be rewritten as a free coproduct modulo relations. To indicate the 
dependence on $\sigma=(i_{1},\ldots,i_{k})$, write 
$L_{ds}\langle b_{i_{1}},\ldots,b_{i_{k}}\rangle\coprod L\langle u_{\sigma}\rangle$ 
as 
$L_{ds}\langle b^{\sigma}_{i_{1}},\ldots,b^{\sigma}_{i_{k}}\rangle 
     \coprod L\langle u_{\sigma}\rangle$ 
Rearranging terms, there is an isomorphism of free coproducts 
\[\coprod_{\sigma\in MF(K)} 
    \left(L_{ds}\langle b^{\sigma}_{i_{1}},\ldots,b^{\sigma}_{i_{k}}\rangle 
    \coprod L\langle u_{\sigma}\rangle\right)\cong 
   \left(\coprod_{\sigma\in MF(K)} 
     L_{ds}\langle b^{\sigma}_{i_{1}},\ldots,b^{\sigma}_{i_{k}}\rangle\right) 
     \coprod L\langle u_{\sigma}\vert\sigma\in MF(K)\rangle.\]
Thus there is an isomorphism  
\begin{multline} 
  \label{Liecolim} 
  \mbox{colim}_{\sigma\in MF(K)} 
    L_{ds}\langle b^{\sigma}_{i_{1}},\ldots,b^{\sigma}_{i_{k}}\rangle 
    \coprod L\langle u_{\sigma}\rangle\cong \\ 
    \left(\left(\coprod_{\sigma\in MF(K)} 
    L_{ds}\langle b^{\sigma}_{i_{1}},\ldots,b^{\sigma}_{i_{k}}\rangle\right) 
     \coprod L\langle u_{\sigma}\vert\sigma\in MF(K)\rangle\right)/\sim\hspace{1cm}  
\end{multline}  
where the relations $\sim$ are as follows. By definition of a directed 
$MF$-complex, two minimal missing faces $\sigma,\sigma^{\prime}\in MF(K)$ 
intersect along a common proper face, which we label as $\tau$. If 
$\tau=\emptyset$ then, as subspaces of $DJ_{K}(\underline{S})$, 
$FW(\sigma)$ and $FW(\sigma^{\prime})$ intersect only at the basepoint. 
The Adams-Hilton model of $FW(\sigma)\vee FW(\sigma^{\prime})$ is 
therefore the coproduct of the Adams-Hilton models for each individual 
space, so after taking homology no relation is introduced on the underlying 
Lie algebras in~(\ref{Liecolim}). If $\tau\neq\emptyset$, suppose that 
$\tau=(r_{1},\ldots,r_{s})$. Topologically, this intersection corresponds to the 
inclusion of the proper subproduct $\prod_{t=1}^{s} S^{2}$ into  
$DJ_{K}(\underline{S})$. The Adams-Hilton model for the loop space of this subproduct 
has homology isomorphic to $UL\langle b^{\tau}_{r_{1}},\ldots,b^{\tau}_{r_{s}}\rangle$, 
and by Remark~\ref{Alldayremark} the inclusion of this subproduct is compatible with 
the Adams-Hilton models for both $\Omega FW(\sigma)$ and 
$\Omega FW(\sigma^{\prime})$. Therefore, upon taking homology, in 
$U(L_{ds}\langle b^{\sigma}_{i_{1}},\ldots,b^{\sigma}_{i_{k}}\rangle 
     \coprod L\langle u_{\sigma}\rangle)$ 
and 
$U(L_{ds}\langle b^{\sigma^{\prime}}_{i_{1}},\ldots,b^{\sigma^{\prime}}_{i_{k^{\prime}}}\rangle 
     \coprod L\langle u_{\sigma^{\prime}}\rangle)$ 
we have $b^{\sigma}_{r_{t}}=b^{\sigma^{\prime}}_{r_{t}}$ for every 
$r_{1},\ldots,r_{t}$. This relation holds equally well on the underlying Lie 
algebras. Hence the relation $\sim$ in~(\ref{Liecolim}) is given by identifying 
$b^{\sigma}_{r_{\ell}}$ and $b^{\sigma^{\prime}}_{r_{\ell}}$ whenever 
$r_{\ell}\in\sigma\cap\sigma^{\prime}$. Consequently, considering all 
the minimal missing faces of $K$, we obtain 
\begin{multline*} 
  \left(\left(\coprod_{\sigma\in MF(K)} 
    L_{ds}\langle b^{\sigma}_{i_{1}},\ldots,b^{\sigma}_{i_{k}}\rangle\right) 
     \coprod L\langle u_{\sigma}\vert\sigma\in MF(K)\rangle\right)/\sim\ \cong \\ 
    L_{ds}\langle b_{1},\ldots,b_{n}\rangle\coprod L\langle u_{\sigma}\vert 
      \sigma\in MF(K)\rangle.\hspace{1cm} 
\end{multline*}  
\end{proof} 

Theorem~\ref{ULcolim2} is the crucial algebraic result. We first use
it to determine $\qhlgy{\Omega\zks}$, and then to determine a more
detailed description of the Hurewicz homomorphism.

Since
$L_{ds}\langle b_{1},\ldots,b_{n}\rangle\coprod
    L\langle u_{\sigma}\mid\sigma\in MF(K)\rangle$
is a coproduct, there is a short exact sequence of graded Lie algebras
\begin{equation}
  \label{SESLie}
   \nameddright{L\langle R\rangle}{i}
    {L_{ds}\langle b_{1},\ldots,b_{n}\rangle\textstyle\coprod
     L\langle u_{\sigma}\mid\sigma\in MF(K)\rangle}
    {\pi}{L_{ds}\langle b_{1},\ldots,b_{n}\rangle}
\end{equation}
where $i$ is the inclusion, $\pi$ is the projection, and
\begin{equation} 
  \label{Rindex} 
  R=\{[[u_{\sigma},b_{j_{1}}],\ldots, b_{j_{l}}]\mid\sigma\in MF(K),
     1\leq j_{1}\leq\cdots\leq j_{l}\leq n, 0\leq l<\infty\}. 
\end{equation} 
Here, when $l=0$ we interpret the bracket as simply being $u_{\sigma}$.

\begin{proposition}
   \label{hlgyloopzks}
   There is a commutative diagram of algebras
   \[\diagram
        \qhlgy{\Omega\zks}\rto^-{(\Omega f)_{\ast}}\dto^{\cong}
           & \qhlgy{\Omega\djks}\dto^{\cong} \\
        UL\langle R\rangle\rto^-{U(i)}
           & U(L_{ds}\langle b_{1},\ldots,b_{n}\rangle\coprod
             L\langle u_{\sigma}\mid\sigma\in MF(K)\rangle).
     \enddiagram\]
\end{proposition}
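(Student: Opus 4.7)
The plan is to identify $\qhlgy{\Omega\zks}$ by applying the universal enveloping algebra functor $U$ to the short exact sequence of rational homotopy Lie algebras arising from the looped fibration $\nameddright{\Omega\zks}{\Omega f}{\Omega\djks}{\Omega g}{\prod_{i=1}^{n}\Omega S^{m_{i}+1}}$, and matching it with the purely algebraic sequence~(\ref{SESLie}).

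First, by Theorem~\ref{ULcolim} the map $(\Omega g)_{\ast}$ is modelled by $U(\pi)$, a surjection of connected cocommutative Hopf algebras over~$\mathbb{Q}$. All three spaces in the fibration are simply connected ($\zks$ by Corollary~\ref{integralM}), so Milnor--Moore together with Cartan--Serre identifies each rational loop space homology with the universal enveloping algebra of the corresponding Samelson Lie algebra. Passing to primitives, $U(\pi)$ becomes a surjection of graded Lie algebras, which forces the connecting homomorphisms in the rational homotopy long exact sequence of the fibration to vanish; since the maps involved are $H$-maps, the resulting short exact sequence is one of graded Lie algebras under the Samelson bracket. Under Theorem~\ref{ULcolim} its middle and right terms coincide with those of~(\ref{SESLie}), so comparing kernels gives an isomorphism $\pi_{\ast}(\Omega\zks)\otimes\mathbb{Q}\cong L\langle R\rangle$ of graded Lie algebras. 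Applying $U$ and invoking Milnor--Moore once more yields $\qhlgy{\Omega\zks}\cong UL\langle R\rangle$; by naturality of the Hurewicz identifications the map $(\Omega f)_{\ast}$ becomes $U(i)$, giving the required commutative diagram.

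The main obstacle is the algebraic verification of short exactness of~(\ref{SESLie}): that the kernel of the projection $\pi$ from the coproduct $L_{ab}\langle b_{1},\ldots,b_{n}\rangle\textstyle\coprod L\langle u_{\sigma}\mid\sigma\in MF(K)\rangle$ is precisely the free Lie subalgebra on the iterated brackets $[[u_{\sigma},b_{j_{1}}],\ldots,b_{j_{l}}]$ with $j_{1}\leq\cdots\leq j_{l}$, the weak inequality being forced by the abelianness of $L_{ab}\langle b_{1},\ldots,b_{n}\rangle$. A clean way to do this is to exhibit the coproduct Lie algebra as the semidirect product of the Lie ideal $\ker\pi$ with $L_{ab}\langle b_{1},\ldots,b_{n}\rangle$, apply Poincar\'e--Birkhoff--Witt to obtain a basis of its universal enveloping algebra in which all $b_{i}$-factors stand on the right, and then read off that $\ker\pi$ is free as a Lie algebra on the displayed brackets. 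Short exactness at the Lie algebra level transfers to the topological setting through the identifications described above, and at that stage the commutative diagram of Hopf algebras is immediate.
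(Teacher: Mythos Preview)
Your argument is correct, but it reaches the conclusion by a different route than the paper. The paper works entirely at the Hopf algebra level: it invokes~\cite[3.7]{CMN} to turn the Lie algebra sequence~(\ref{SESLie}) into a short exact sequence of Hopf algebras, and then uses the fact that $\Omega g$ has a right homotopy inverse (so that $\Omega\djks\simeq(\prod_{i}\Omega S^{m_i+1})\times\Omega\zks$) to obtain a second short exact sequence of Hopf algebras with the same right-hand map $U(\pi)$. Comparing the two algebra kernels of $U(\pi)$ gives the diagram directly, with no passage through rational homotopy groups. You instead pass to primitives and run the long exact sequence of rational homotopy groups: surjectivity of $(\Omega g)_{\ast}$ forces the connecting maps to vanish, giving a short exact sequence of Samelson Lie algebras, which you then match with~(\ref{SESLie}) before applying $U$. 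Your route gives the bonus identification $\pi_{\ast}(\Omega\zks)\otimes\mathbb{Q}\cong L\langle R\rangle$ as Lie algebras, while the paper's route is shorter and avoids needing the isomorphism of Theorem~\ref{ULcolim} to be one of Hopf algebras (it only uses that $(\Omega g)_{\ast}$ is modelled as an algebra map by $U(\pi)$). Your second paragraph, sketching a PBW/semidirect-product argument for the exactness of~(\ref{SESLie}), is a welcome addition, since the paper simply states~(\ref{SESLie}) without proof.
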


\begin{proof}
By~\cite[3.7]{CMN}, a short exact sequence of graded Lie algebras
induces a short exact sequence of Hopf algebras. In our case,
(\ref{SESLie})~induces a short exact sequence
\[\nameddright{UL\langle R\rangle}{U(i)}
    {U(L_{ds}\langle b_{1},\ldots,b_{n}\rangle\textstyle\coprod
     L\langle u_{\sigma}\mid\sigma\in MF(K)\rangle)}
    {U(\pi)}{UL_{ds}\langle b_{1},\ldots,b_{n}\rangle}.\]
Here, by a short exact sequence of Hopf algebras, we mean that there
is an isomorphism
\[U(L_{ds}\langle b_{1},\ldots,b_{n}\rangle\textstyle\coprod
     L\langle u_{\sigma}\mid\sigma\in MF(K)\rangle)\cong
     UL_{ds}\langle b_{1},\ldots,b_{n}\rangle\otimes UL\langle R\rangle\]
as right $UL\langle R\rangle$-modules and left
$UL_{ds}\langle b_{1},\ldots,b_{n}\rangle$-comodules.
In particular, $U(i)$ is the algebra kernel of $U(\pi)$. On the other hand,
Theorem~\ref{ULcolim2} implies that $U(\pi)$ is a model for the looped
inclusion
\(\namedright{\Omega\djks}{\Omega g}{\prod_{i=1}^{n}\Omega S^{m_{i}+1}}\).
Since $\Omega g$ has a right homotopy inverse, the homotopy decomposition
$\Omega\djks\simeq
    (\prod_{i=1}^{n}\Omega S^{m_{i}+1})\times\Omega\zks$
implies that there is a short exact sequence of Hopf algebras
\(\nameddright{\qhlgy{\Omega\zks}}{}{\qhlgy{\Omega\djks}}
    {U(\pi)}{\qhlgy{\prod_{i=1}^{n}\Omega S^{m_{i}+1}}}\).
Thus $\qhlgy{\Omega\zks}$ is also the algebra kernel of $U(\pi)$,
and the proposition follows.
\end{proof}

Theorem~\ref{ULcolim2} implies that the rational homology of $\Omega\djks$
is generated by Hurewicz images. To be precise, suppose that 
$\sigma=(i_{1},\ldots,i_{k})\in MF(K)$. Recall that 
$\underline{S}=(S^{m_{i_{1}}+1},\ldots,S^{m_{i_{k}}+1})$, and let 
$N_{\sigma}=(\Sigma_{j=1}^{k} m_{i_{j}+1})-2$. Let 
\[w_{\sigma}\colon\nameddright{S^{N_{\sigma}+1}}{\phi_{k}}{FW(\sigma)}
       {}{\djks}\]
be the higher Whitehead product. Let
\[s_{\sigma}\colon\namedright{S^{N_{\sigma}}}{}{\Omega\djks}\]
be the adjoint of $w_{\sigma}$. As stated in Theorem~\ref{ULcolim2},
the element $u_{\sigma}\in\qhlgy{\Omega\djks}$ is the Hurewicz
image of~$s_{\sigma}$. For $1\leq i\leq n$, let
\[a_{i}\colon\namedright{S^{m_{i}+1}}{}{\djks}\]
be the coordinate inclusion and let
\[\bar{a}_{i}\colon\namedright{S^{m_{i}}}{}{\Omega\djks}\]
be the adjoint of $a_{i}$. The Hurewicz image of $\bar{a}_{i}$ is $b_{i}$.
Let $\mathcal{I}$ be the index set for $R$ introduced in~(\ref{Rindex}). 
Then $\alpha\in\mathcal{I}$
corresponds to a face $\sigma\in MF(K)$ and a sequence
$(j_{1},\ldots,j_{l})$ where $1\leq j_{1}\leq\cdots\leq j_{l}\leq n$
and $0\leq l<\infty$. Given such an $\alpha$, let
$t_{\alpha}=(\Sigma_{t=1}^{l} m_{j_{t}})+N_{\sigma}$. Then there
is a Samelson product
\[[[s_{\sigma},\bar{a}_{j_{1}}],\ldots,\bar{a}_{j_{l}}]\colon
      \namedright{S^{t_{\alpha}}}{}{\Omega\djks}.\]
Since Samelson products commute with Hurewicz images, the
Hurewicz image of $[[s_{\sigma},\bar{a}_{j_{1}}],\ldots,\bar{a}_{j_{l}}]$
is $[[u_{\sigma},b_{j_{1}}],\ldots,b_{j_{l}}]$.
Adjointing, we have a Whitehead product
\[[[w_{\sigma},a_{j_{1}}],\ldots,a_{j_{l}}]\colon
      \namedright{S^{t_{\alpha}+1}}{}{\djks}.\]
Taking the wedge sum over all possible $\alpha$, we obtain a map
\[W\colon\namedright{\bigvee_{\alpha\in\mathcal{I}} S^{t_{\alpha}+1}}{}
    {\djks}.\]

\begin{corollary}
   \label{Hurewiczzks}
   The map
   \(\namedright{\Omega(\bigvee_{\alpha\in\mathcal{I}} S^{t_{\alpha}+1})}
        {\Omega W}{\Omega\djks}\)
   induces in rational homology the map
   \(\namedright{UL\langle R\rangle}{U(i)}
        {U(L_{ds}\langle b_{1},\ldots,b_{n}\rangle\coprod
             L\langle u_{\sigma}\mid\sigma\in MF(K)\rangle)}\).
\end{corollary}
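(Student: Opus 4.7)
The plan is to identify both the source and the target of $(\Omega W)_\ast$ with free associative algebras on matching generating sets, reducing the assertion to a comparison on generators, and then to compute each generator's image using the compatibility of the Hurewicz homomorphism with Samelson products.

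First, since every sphere $S^{t_\alpha+1}$ has dimension at least $2$, the Bott--Samelson theorem gives an isomorphism of Hopf algebras
\[
\qhlgy{\Omega\textstyle\bigvee_{\alpha\in\mathcal{I}} S^{t_\alpha+1}}\;\cong\; UL\langle v_\alpha\mid\alpha\in\mathcal{I}\rangle,
\]
where $v_\alpha$, of degree $t_\alpha$, is the Hurewicz image of the adjoint of the inclusion of the $\alpha^{th}$ wedge summand. Matching $v_\alpha$ with the generator $[[u_\sigma,b_{j_1}],\ldots,b_{j_l}]$ of $R$ corresponding to $\alpha=(\sigma,(j_1,\ldots,j_l))$ yields an algebra isomorphism $UL\langle v_\alpha\rangle\cong UL\langle R\rangle$. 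Under this identification, together with the isomorphism $\qhlgy{\Omega\djks}\cong U(L_{ab}\langle b_1,\ldots,b_n\rangle\coprod L\langle u_\sigma\mid\sigma\in MF(K)\rangle)$ from Theorem~\ref{ULcolim}, the statement of the corollary reduces to showing that $(\Omega W)_\ast$ and $U(i)$ agree on each $v_\alpha$, since both are algebra homomorphisms out of a free tensor algebra.

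Second, I would compute $(\Omega W)_\ast(v_\alpha)$ directly. By naturality of the Hurewicz homomorphism, this class is the Hurewicz image of the adjoint of the restriction $W|_{S^{t_\alpha+1}}$. By construction of $W$, the restriction is the iterated Whitehead product $[[w_\sigma,a_{j_1}],\ldots,a_{j_l}]$ in $\djks$, whose adjoint is the iterated Samelson product $[[s_\sigma,\bar a_{j_1}],\ldots,\bar a_{j_l}]$ in $\pi_\ast(\Omega\djks)\otimes\mathbb{Q}$. Because the Hurewicz homomorphism intertwines the Samelson bracket with the graded commutator on $\qhlgy{\Omega\djks}$, and because Theorem~\ref{ULcolim} identifies the Hurewicz images $h(s_\sigma)=u_\sigma$ and $h(\bar a_i)=b_i$, this Hurewicz image equals $[[u_\sigma,b_{j_1}],\ldots,b_{j_l}]$. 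This is precisely $U(i)$ applied to the corresponding generator of $UL\langle R\rangle$, so the two algebra maps agree on generators and hence coincide.

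The only nonformal step is the Lie-algebra compatibility of the Hurewicz homomorphism with iterated Samelson products, which is the main potential obstacle. This, however, is classical, proved by comparison with the universal Samelson product on a James construction (cf.\ Milnor--Moore); so no new work is required. Everything else is bookkeeping that assembles the Bott--Samelson theorem, Theorem~\ref{ULcolim}, and the freeness of the tensor algebra.
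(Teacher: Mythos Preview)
Your proposal is correct and follows essentially the same route as the paper: both arguments identify the source via the Bott--Samelson theorem as a free tensor algebra $UL\langle R\rangle$, identify the target via Theorem~\ref{ULcolim}, and then reduce to checking the map on generators by observing that the adjoint of $W$ restricted to each wedge summand is the Samelson product $[[s_\sigma,\bar a_{j_1}],\ldots,\bar a_{j_l}]$ with Hurewicz image $[[u_\sigma,b_{j_1}],\ldots,b_{j_l}]$. The only cosmetic difference is order of exposition---the paper first packages the adjoint as a map $S$ and computes its Hurewicz image, then invokes Bott--Samelson to extend multiplicatively, whereas you set up the algebra identifications first.
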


\begin{proof}
Let $S$ be the composite
\[S\colon\nameddright{\bigvee_{\alpha\in\mathcal{I}} S^{t_{\alpha}}}{E}
    {\Omega(\bigvee_{\alpha\in\mathcal{I}} S^{t_{\alpha}+1})}{\Omega W}
    {\Omega\djks}.\]
Then $S$ is homotopic to the adjoint of $W$. In particular, the
wedge summands of $S$ are the Samelson products
$[[s_{\sigma},\bar{a}_{j_{1}}],\ldots,\bar{a}_{j_{l}}]$ for
$\alpha\in\mathcal{I}$. Thus, taking Hurewicz images, $S_{\ast}$
is the composite
\[\bar{i}\colon R\hookrightarrow\nameddright{UL\langle R\rangle}{U(i)}
    {U(L_{ds}\langle b_{1},\ldots,b_{n}\rangle\coprod
             L\langle u_{\sigma}\mid\sigma\in MF(K)\rangle)}{\cong}
    {\hlgy{\Omega\djks}}.\]
By the Bott-Samelson Theorem,
$\qhlgy{\Omega(\bigvee_{\alpha\in\mathcal{I}} S^{t_{\alpha}+1})}\cong
    T(\rqhlgy{\bigvee_{\alpha\in\mathcal{I}} S^{t_{\alpha}}})$,
and the latter algebra is isomorphic to $UL\langle R\rangle$.
Therefore, as $(\Omega W)_{\ast}$ is the multiplicative extension
of $S_{\ast}$, it induces the multiplicative extension $U(i)$ of $\bar{i}$.
\end{proof}

Finally, we bring \zks\ back into the picture.

\begin{theorem}
   \label{zksdecomp}
   The map
   \(\namedright{\bigvee_{\alpha\in\mathcal{I}} S^{t_{\alpha}+1}}{W}{\djks}\)
   lifts to \zks, and induces a homotopy equivalence
   \(\namedright{\bigvee_{\alpha\in\mathcal{I}} S^{t_{\alpha}+1}}{}{\zks}\).
\end{theorem}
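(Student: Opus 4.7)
The plan is to prove the statement in two stages: first check that $W$ lifts through $f\colon \zks \to \djks$, then show the resulting lift is a homotopy equivalence. For the lifting step, since $\zks$ is the homotopy fibre of $g\colon \djks \to \prod_{i=1}^{n} S^{m_{i}+1}$, it suffices to verify that $g \circ W$ is null-homotopic, and I will do this wedge-summand by wedge-summand. Each summand has the form $[[w_{\sigma}, a_{j_{1}}], \ldots, a_{j_{l}}]$. By Lemma~\ref{HWZK}, each higher Whitehead product $w_{\sigma}$ lifts to $\zks$, so $g \circ w_{\sigma}$ is null. Naturality of Whitehead products, together with the fact that a Whitehead product one of whose factors is null is itself null, then gives inductively that $g \circ [[w_{\sigma}, a_{j_{1}}], \ldots, a_{j_{l}}]$ is null-homotopic. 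Wedging over all $\alpha \in \mathcal{I}$ produces the required lift $\tilde{W}\colon \bigvee_{\alpha} S^{t_{\alpha}+1} \to \zks$.

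For the equivalence, I first deduce that $\tilde{W}$ is a rational homology equivalence. Corollary~\ref{Hurewiczzks} identifies $(\Omega W)_{*}$ with the map $U(i)\colon UL\langle R\rangle \to \qhlgy{\Omega\djks}$, while Proposition~\ref{hlgyloopzks} identifies $(\Omega f)_{*}$ with $U(i)$ after the canonical isomorphism $\qhlgy{\Omega\zks} \cong UL\langle R\rangle$. Since $\tilde{W}$ is a lift of $W$, composing gives $(\Omega f)_{*} \circ (\Omega\tilde{W})_{*} = (\Omega W)_{*} = (\Omega f)_{*}$. Because $i$ appears as the kernel inclusion in the short exact sequence~(\ref{SESLie}) of graded Lie algebras, $U(i)$ is injective, so $(\Omega\tilde{W})_{*}$ must be the identity on $UL\langle R\rangle$. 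Hence $\Omega\tilde{W}$ is a rational homology isomorphism of simply-connected H-spaces, and therefore $\tilde{W}$ is a rational equivalence of simply-connected spaces.

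To pass to an integral equivalence, I appeal to Corollary~\ref{integralM}, which presents both the source and the target of $\tilde{W}$ as wedges of simply-connected spheres, hence as spaces with torsion-free integral homology. The Bott-Samelson identification of $\qhlgy{\Omega\zks}$ with $UL\langle R\rangle$ yields a bijection between the wedge summands of $\zks$ (after a homotopy equivalence to a wedge of spheres) and the indexing set $\mathcal{I}$ that preserves dimension, and the Whitehead products making up $W$ are arranged precisely so that their Hurewicz images generate the corresponding summands. It follows that $\tilde{W}$ induces an isomorphism on integral homology, and Whitehead's theorem concludes the argument. The main obstacle is this final step: translating $(\Omega\tilde{W})_{*} = \mathrm{id}$ into an integral homology isomorphism of the base spaces, since a rational equivalence between torsion-free simply-connected spaces is not automatically an integral equivalence. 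The matching of wedge summands must be carried out explicitly, using the identification of each $S^{t_{\alpha}+1}$ with the specific iterated Whitehead product whose Hurewicz image gives a distinct generator in the Bott-Samelson computation.
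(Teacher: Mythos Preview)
Your argument tracks the paper's proof closely. For the lift you do exactly what the paper does: invoke Lemma~\ref{HWZK} to lift each $w_{\sigma}$, then observe that any iterated Whitehead product built on $w_{\sigma}$ composes trivially into $\prod_{i=1}^{n}S^{m_{i}+1}$ and hence lifts, giving a lift $\lambda$ (the paper's name for your $\tilde{W}$). For the rational step, your cancellation argument using injectivity of $U(i)$ is a minor rephrasing of the paper's: the paper notes that $(\Omega\lambda)_{\ast}$ is injective because its composite with $(\Omega f)_{\ast}$ equals the injective map $U(i)$, and then invokes a dimension count against the isomorphism $\qhlgy{\Omega\zks}\cong UL\langle R\rangle$ from Proposition~\ref{hlgyloopzks} to conclude it is an isomorphism. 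The two versions are equivalent.

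The one substantive difference is the passage from rational to integral. The paper simply asserts that, because the integral homology of a wedge of spheres is torsion free, a map between two such wedges inducing a rational homology isomorphism automatically induces an integral homology isomorphism, and then applies Whitehead's theorem. You correctly observe that this implication fails in general (a degree-$2$ self-map of a sphere is the obvious counterexample) and flag it as the main obstacle. Your proposed remedy, an explicit matching of wedge summands via Hurewicz images, is the right instinct, but you do not actually carry it out; so your proof is incomplete at exactly the point where the paper's own argument is, you are merely more candid about it. To genuinely close the gap one needs an integral input, for example showing that the Hurewicz images of the lifted Whitehead products give an integral (not just rational) basis of $H_{\ast}(\zks;\mathbb{Z})$, or rerunning the loop-homology identification over $\mathbb{Z}$.
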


\begin{proof}
By Lemma~\ref{fgsigmasplit}, each higher Whitehead product $w_{\sigma}$
lifts to \zks. Therefore each iterated Whitehead product
$[[w_{\sigma},a_{j_{1}}],\ldots,a_{j_{l}}]$ into \djks\ composes
trivially to $\prod_{i=1}^{n} S^{m_{i}+1}$ and so lifts to \zks. Hence
there is a lift
\begin{equation} 
  \label{lambdalift} 
  \diagram
     & \bigvee_{\alpha\in\mathcal{I}} S^{t_{\alpha}+1}\dto^{W}\dlto_-{\lambda} \\
     \zks\rto & \djks
  \enddiagram 
\end{equation} 
for some map $\lambda$.

After looping, Corollary~\ref{Hurewiczzks} implies that the map
\(\namedright{\Omega(\bigvee_{\alpha\in\mathcal{I}} S^{t_{\alpha}+1})}
        {\Omega\lambda}{\Omega\zks}\)
induces an inclusion
\(\namedright{UL\langle R\rangle}{(\Omega\lambda)_{\ast}}
    {\qhlgy{\Omega\zks}}\).
By Proposition~\ref{hlgyloopzks}, there is an isomorphism
$\qhlgy{\Omega\zks}\cong UL\langle R\rangle$. Therefore a counting
argument implies that the inclusion $(\Omega\lambda)_{\ast}$ must be
an isomorphism. Hence $\Omega\lambda$ is a rational homotopy
equivalence. That is, $\Omega\lambda$ induces an isomorphism of
rational homotopy groups. Therefore, so does $\lambda$, and so
$\lambda$ is a rational homotopy equivalence.

To upgrade this to an integral homotopy equivalence, proceed as follows. 

\noindent 
\textbf{Step 1}: Let 
\(g_{\sigma}\colon\namedright{S^{N_{\sigma}+1}}{}{\zks}\) 
be a lift of the higher Whitehead product $w_{\sigma}$. By 
Lemma~\ref{fgsigmasplit}, $g_{\sigma}$ can be chosen so that 
it has a left homotopy inverse. Now take the adjoint of~(\ref{lambdalift}) 
to obtain a homotopy commutative diagram 
\[\diagram 
      & \bigvee_{\alpha\in\mathcal{I}} S^{t_{\alpha}} 
               \dto^{\overline{W}}\dlto_{\overline{\lambda}} \\ 
       \Omega\mathcal{Z}_{K}(\overline{S})\rto^-{\Omega\varphi} 
           & \Omega DJ_{K}(\underline{S}) 
  \enddiagram\]
where $\overline{W},\overline{\lambda}$ are the adjoints of $W,\lambda$ 
respectively. Let 
\(\bar{g}_{\sigma}\colon\namedright{S^{N_{\sigma}}}{} 
    {\Omega\mathcal{Z}_{K}(\underline{S})}\) 
be the adjoint of $g_{\sigma}$. Then $\bar{g}_{\sigma}$ is homotopic to the composite 
\(\nameddright{S^{N_{\sigma}}}{E}{\Omega S^{N_{\sigma}+1}}{\Omega g_{\sigma}} 
    {\Omega\mathcal{Z}_{K}(\underline{S})}\) 
where $E$ is the suspension map. Since $g_{\sigma}$ has a left homotopy 
inverse, so does $\Omega g_{\sigma}$. Since $E_{\ast}$ is the inclusion 
of the generator in 
$\hlgy{\Omega S^{N_{\sigma}+1};\mathbb{Z}}\cong\mathbb{Z}[x_{N_{\sigma}}]$, 
we have that $\bar{g}_{\sigma}$ has nonzero Hurewicz image 
$\bar{u}_{\sigma}$ for some element 
$\bar{u}_{\sigma}\in H_{t}(\Omega\mathcal{Z}_{K}(\underline{S});\mathbb{Z})$.  
Let 
\(\bar{w}_{\sigma}\colon\namedright{S^{N_{\sigma}}}{}{\Omega DJ_{K}(\underline{S})}\) 
be the adjoint of $w_{\sigma}$. Since $g_{\sigma}$ is a lift of~$w_{\sigma}$ 
through $\varphi$, $\bar{g}_{\sigma}$ is a lift of~$\bar{w}_{\sigma}$ 
through $\Omega\varphi$. Therefore, if we identify 
$\bar{u}_{\sigma}\in\hlgy{\Omega\mathcal{Z}_{K}(\underline{S}):\mathbb{Z}}$ 
with its image in $\hlgy{\Omega DJ_{K}(\underline{S});\mathbb{Z}}$, 
then $\bar{w}_{\sigma}$ has Hurewicz image $\bar{u}_{\sigma}$. 

\noindent 
\textbf{Step 2}: 
Observe that the homotopy fibration 
\(\nameddright{\Omega\mathcal{Z}_{K}(\underline{S})}{\Omega\varphi} 
        {\Omega DJ_{K}(\underline{S})}{}{\prod_{i=1}^{n} S^{1}}\) 
splits as 
\[\Omega DJ_{K}(\underline{S})\simeq 
    \prod_{i=1}^{n} S^{1}\times\Omega\mathcal{Z}_{K}(\underline{S}).\] 
For  $1\leq i\leq n$, the coordinate inclusion 
\(\namedright{S^{2}}{a_{i}}{DJ_{K}(\underline{S})}\) 
adjoints to a map 
\(\namedright{S^{1}}{\bar{a}_{i}}{\Omega DJ_{K}(\underline{S})}\). 
The decomposition of $\Omega DJ_{K}(\underline{S})$ implies that 
$\bar{a}_{i}$ has a nonzero Hurewicz image 
$\bar{b}_{i}\in H_{1}(\Omega DJ_{K}(\underline{S});\mathbb{Z})$.  
As Samelson products commute with 
Hurewicz images, the iterated Samelson product 
$[[\bar{w}_{\sigma},\bar{a}_{j_{1}}],\ldots,\bar{a}_{j_{l}}]$ 
has Hurewicz image 
$[[\bar{u}_{\sigma},\bar{b}_{j_{1}}],\ldots,\bar{b}_{j_{l}}]$. 

\noindent 
\textbf{Step 3}: 
Now consider the rationalization map 
\[r\colon\namedright{\hlgy{\Omega DJ_{K}(\underline{S});\mathbb{Z}}} 
         {}{\hlgy{\Omega DJ_{K}(\underline{S});\mathbb{Q}}}.\] 
The integral splittings of spheres off $\mathcal{Z}_{K}(\underline{S})$ 
via the maps $g_{\sigma}$ and the integral decomposition of 
$\Omega DJ_{K}(\underline{S})$ induce corresponding rational splittings 
and a rational decomposition. Thus $r(\bar{u}_{\sigma})=u_{\sigma}$ 
and $r(\bar{b}_{i})=b_{i}$, where $u_{\sigma}$ and $b_{i}$ is the notation 
used for the rational classes in $\hlgy{\Omega DJ_{K}(\underline{S});\mathbb{Q}}$ 
in Corollary 6.4. As $r$ commutes with commutators, we therefore have  
$r([[\bar{u}_{\sigma},\bar{b}_{j_{1}}],\ldots,\bar{b}_{j_{l}}])= 
        [[u_{\sigma},b_{j_{1}}],\ldots, b_{j_{l}}]$. 
In particular, since $[[u_{\sigma},b_{j_{1}}],\ldots,b_{j_{l}}]$ is a 
generator of $R$ (notation as in Corollary~6.4), the Hurewicz image 
$[[\bar{u}_{\sigma},\bar{b}_{j_{1}}],\ldots,\bar{b}_{j_{l}}]$ in 
$\hlgy{\Omega DJ_{K}(\underline{S});\mathbb{Z}}$ is nonzero 
and maps by a degree one map to $[[u_{\sigma},b_{j_{1}}],\ldots b_{j_{l}}]$. 

Notice that the iterated Samelson product 
$[[\bar{w}_{\sigma},\bar{a}_{j_{1}}],\ldots,\bar{a}_{j_{l}}]$ 
is the adjoint of the iterated Whitehead product 
$[[w_{\sigma},a_{j_{1}}],\ldots,a_{j_{l}}]$. Therefore the collection 
of adjointed higher Whitehead products~$\bar{w}_{\sigma}$ and 
iterated Samelson products $[[\bar{w}_{\sigma},\bar{a}_{j_{1}}],\ldots,\bar{a}_{j_{l}}]$ 
comprise the map $\overline{W}$, and collectively lift by the 
map $\overline{\lambda}$ to $\Omega\mathcal{Z}_{K}(\underline{S})$.   
Each has an integral Hurewicz image which the rationalization map $r$ sends by 
a degree one map to a generator in 
$R\subseteq\hlgy{\Omega\mathcal{Z}_{K}(\underline{S});\mathbb{Q}} 
     \subseteq\hlgy{\Omega DJ_{K}(\underline{S});\mathbb{Q}}$. 
Moreover, these integral Hurewicz images are linearly independent since their 
rationalizations are, and they are in one-to-one correspondence 
with the rational classes in $R$.  

\noindent 
\textbf{Step 4}: 
By Theorem 1.4, $\mathcal{Z}_{K}(\underline{S})$ is homotopy 
equivalent to a wedge of simply-connected spheres, say 
$\mathcal{Z}_{K}(\underline{S})\simeq 
    \Sigma(\bigvee_{\beta\in\mathcal{J}} S^{t_{\beta}})$. 
Thus $R\cong\hlgy{\bigvee_{\beta\in\mathcal{J}} S^{t_{\beta}};\mathbb{Q}}$ 
and the inclusion of $R$ into 
$\hlgy{\Omega\mathcal{Z}_{K}(\underline{S});\mathbb{Q}}$ is 
induced by the suspension map $E$. Therefore, $\overline{\lambda}$ 
maps $\hlgy{\bigvee_{\alpha\in\mathcal{I}} S^{t_{\alpha}};\mathcal{Z}}$ 
isomorphically onto a set 
$R^{\prime}\subseteq\hlgy{\bigvee_{\beta\in\mathcal{J}} S^{t_{\beta}};\mathbb{Z}}$,  
which the rationalization $r$ sends by degree one maps to $R$. But $r$ 
sends $\hlgy{\bigvee_{\beta\in\mathcal{J}} S^{t_{\beta}};\mathbb{Z}}$ by 
degree one maps to $\hlgy{\bigvee_{\beta\in\mathcal{J}} S^{t_{\beta}};\mathbb{Q}}$.  
Thus $R^{\prime}\cong\hlgy{\bigvee_{\beta\in\mathcal{J}} S^{t_{\beta}};\mathbb{Z}}$. 
Adjointing, we obtain that the map 
\(\namedright{\bigvee_{\alpha\in\mathcal{I}} S^{t_{\alpha}+1}}{\lambda} 
     {\mathcal{Z}_{K}(\underline{S})}\) 
induces an isomorphism in integral homology. Hence $\lambda$ is 
an integral homotopy equivalence. 
\end{proof}

\noindent
\textit{Proof of Theorem~\ref{djksmaps}}:
This is simply a rephrasing of Theorem~\ref{zksdecomp}.
$\qqed$

\section{Properties of $\Omega\djk$ and $\Omega\zk$ for directed  
     $MF$-complexes}
\label{sec:djkproperties}

Recall that 
\(\imath\colon\namedright{S^{2}}{}{\cpinf}\)
is the inclusion of the bottom cell. By naturality, there is a
homotopy fibration diagram
\[\diagram
    \zkstwo\rto\dto^{\zk(\imath)}
       & \djkstwo\rto\dto^{\djk(\imath)}
       & \prod_{i=1}^{r} S^{2}\dto^{\prod_{i=1}^{r}\imath} \\
    \zk\rto & \djk\rto & \prod_{i=1}^{r}\cpinf.
  \enddiagram\]
In this section we will use the calculations of $\qhlgy{\Omega\djkstwo}$
and $\qhlgy{\Omega\zkstwo}$ in Section~\ref{sec:djksproperties} to
calculate $\qhlgy{\Omega\djk}$, proving
Theorem~\ref{hlgyloopdjk}, and $\qhlgy{\Omega\zk}$. We then give a
homotopy decomposition of \zk\ as a wedge of spheres and describe the map
\(\namedright{\zk}{}{\djk}\)
in terms of higher Whitehead products and iterated Whitehead products,
proving Theorem~\ref{djkmaps}.

By Proposition~\ref{MdjkAH},
$\hlgy{\Omega\djk}\cong\colim_{\sigma\in MF(K)}\qhlgy{\Omega
FW(\sigma)}$, so we first need to calculate $\qhlgy{\Omega
FW(\sigma)}$ and then take a colimit to put the pieces together. We
do this in Lemma~\ref{djkBubenik} and Theorem~\ref{hlgyloopdjk2}
after two preliminary lemmas. In general, let $X_{1},\ldots,X_{n}$
be path-connected spaces and consider the fat wedge
$FW(\underline{X})$ in $\prod_{i=1}^{n} X_{i}$. Let $j$ be the
inclusion \(j\colon\namedright{FW(\underline{X})}{}{\prod_{i=1}^{n}
X_{i}}\).

\begin{lemma}
   \label{natFWinverse}
   The map
   \(\namedright{\Omega FW(\underline{X})}{\Omega j}
       {\prod_{i=1}^{n}\Omega X_{i}}\)
   has a right homotopy inverse, which can be chosen to be natural
   for maps
   \(\namedright{X_{i}}{}{Y_{i}}\).
\end{lemma}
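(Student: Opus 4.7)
The plan is to construct a right homotopy inverse $s$ to $\Omega j$ explicitly, using the coordinate inclusions into $FW(\underline{X})$ combined via loop multiplication, and then to read off naturality from the functoriality of the ingredients.

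For each $1\leq i\leq n$, the coordinate inclusion $c_i\colon X_i\hookrightarrow FW(\underline{X})$ sending $x_i\mapsto(*,\ldots,*,x_i,*,\ldots,*)$ is well-defined since its image lies in the fat wedge. Looping yields $\Omega c_i\colon \Omega X_i\to \Omega FW(\underline{X})$, and using the $H$-space multiplication on $\Omega FW(\underline{X})$ one defines
\[s\colon\prod_{i=1}^{n}\Omega X_i\longrightarrow \Omega FW(\underline{X}),\qquad (\ell_1,\ldots,\ell_n)\longmapsto (\Omega c_1)(\ell_1)\cdot\ldots\cdot(\Omega c_n)(\ell_n).\]

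Next I would verify that $(\Omega j)\circ s\simeq \mathrm{id}$. Since $\Omega j$ is an $H$-map, $(\Omega j)\circ s$ equals the product (in the $H$-structure on $\Omega\prod_{k}X_k$) of the composites $\Omega(j\circ c_i)$. Each $j\circ c_i$ is the $i$-th coordinate inclusion $X_i\to \prod_{k=1}^{n} X_k$, so under the canonical $H$-equivalence $\Omega\prod_{k}X_k\simeq \prod_{k}\Omega X_k$ (loop concatenation on the left corresponding to factorwise concatenation on the right), $\Omega(j\circ c_i)$ becomes the $i$-th coordinate inclusion of loop spaces. Therefore $(\Omega j)(s(\ell_1,\ldots,\ell_n))$ is the factorwise product of these coordinate inclusions, which equals $(\ell_1,\ldots,\ell_n)$.

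For naturality, any maps $f_i\colon X_i\to Y_i$ induce pointed maps of fat wedges and of products that commute with the coordinate inclusions $c_i$ and with $j$, and looping preserves $H$-maps. Hence the construction of $s$ is functorial in the $f_i$, giving the asserted naturality. The only mild technicality is confirming that $\Omega\prod_{k}X_k\simeq\prod_{k}\Omega X_k$ is an $H$-equivalence, but this is a standard property of the loop functor applied to products.
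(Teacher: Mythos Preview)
Your argument is correct and is essentially the same as the paper's: the paper constructs the section as the composite $\prod_{i}\Omega X_{i}\to\prod_{i}\Omega(\bigvee_{i} X_{i})\xrightarrow{\mu}\Omega(\bigvee_{i} X_{i})\to\Omega FW(\underline{X})$, which is exactly your map $s$ once one notes that your $c_{i}$ factors through the wedge. You in fact supply more detail than the paper does in checking that the composite with $\Omega j$ is the identity.
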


\begin{proof}
The inclusion
\(\namedright{\bigvee_{i=1}^{n} X_{i}}{}{FW(\underline{X})}\)
is natural, as are the inclusions
\(\namedright{X_{i}}{}{\bigvee_{i=1}^{n} X_{i}}\)
for $1\leq i\leq n$. Now loop and consider the composite
\[m\colon\namedddright{\prod_{i=1}^{n}\Omega X_{i}}{}
   {\prod_{i=1}^{n}\Omega(\bigvee_{i=1}^{n} X_{i})}{\mu}
   {\Omega(\bigvee_{i=1}^{n} X_{i})}{}{\Omega FW(\underline{X})},\]
where $\mu$ is the loop multiplication. All three maps in the
composite are natural, and $m$ is a right homotopy inverse of $\Omega j$.
\end{proof}

Let $F$ be the homotopy fibre of $j$. As mentioned earlier,
Porter~\cite{P2} showed that there is a homotopy equivalence
$F\simeq\Sigma^{n-1}\Omega X_{1}\wedge\cdots\wedge\Omega X_{n}$.
Further, in~\cite[1.2]{P1} he showed that this homotopy equivalence is
natural for maps
\(\namedright{X_{i}}{}{Y_{i}}\).
We record this as follows.

\begin{lemma}
   \label{FWfibrenat}
   Let
   \(f_{i}\colon\namedright{X_{i}}{}{Y_{i}}\)
   be maps between simply-connected spaces. There is a homotopy
   commutative diagram between fibrations
   \[\diagram
        \Sigma^{n-1}\Omega X_{1}\wedge\cdots\wedge\Omega X_{n}\rrto
              \dto^{\Sigma^{n-1}\Omega f_{1}\wedge\cdots\wedge\Omega f_{n}}
           & & FW(\underline{X})\rrto^-{j}\dto^{FW(f_{1},\ldots,f_{n})}
           & & \prod_{i=1}^{n} X_{i}\dto^{\prod_{i=1}^{n} f_{i}} \\
        \Sigma^{n-1}\Omega Y_{1}\wedge\cdots\wedge\Omega Y_{n}\rrto
           & & FW(\underline{Y})\rrto^-{j} & & \prod_{i=1}^{n} X_{i}.
     \enddiagram\]
   $\qqed$
\end{lemma}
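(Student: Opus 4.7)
The plan is to reduce the lemma to the naturality of Porter's identification of the fat-wedge fibre, which is the content of \cite[1.2]{P1}. The argument splits cleanly into two steps: assembling the right-hand square, and then identifying the induced map of fibres on the left.

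First, I would verify that the right-hand square commutes strictly. Both $FW(-)$ and $\prod_{i=1}^{n}(-)$ are functorial with respect to coordinate-wise maps, and $j$ is a natural transformation between them; the vertical arrows $FW(f_{1},\ldots,f_{n})$ and $\prod_{i=1}^{n} f_{i}$ are defined precisely as the induced coordinate-wise maps, so the square commutes on the nose. Consequently the pair of Porter fibrations receives a strict map of fibration sequences from the coordinate-wise family $(f_{i})$, and general fibration theory supplies an induced map on homotopy fibres, unique up to homotopy and fitting into a homotopy commutative left-hand square.

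Second, I would identify this induced map on fibres with $\Sigma^{n-1}\Omega f_{1}\wedge\cdots\wedge\Omega f_{n}$ under Porter's equivalence $F\simeq\Sigma^{n-1}\Omega X_{1}\wedge\cdots\wedge\Omega X_{n}$. Porter's construction of this equivalence is built from ingredients that are all visibly coordinate-wise natural: the coordinate inclusions into $FW(\underline{X})$, the loop multiplications $\Omega X_{i}\times\Omega X_{i}\to\Omega X_{i}$, the lift through $j$ of the adjoint higher Whitehead product $\phi_{n}$, and the smash/suspension coordinates. Applying each of these constructions in the $X_{i}$-diagram and the $Y_{i}$-diagram in parallel, and using the naturality of each step with respect to $f_{i}$, produces a homotopy commutative diagram whose left-hand vertical arrow is exactly $\Sigma^{n-1}\Omega f_{1}\wedge\cdots\wedge\Omega f_{n}$.

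The main obstacle is purely bookkeeping: one must check that Porter's specific identification of the fibre (and not merely the abstract existence of such an equivalence) is compatible coordinate-wise with $\Omega f_{i}$. This is precisely what is asserted and proved in \cite[Proposition~1.2]{P1}, so once the right-hand square is in place the left-hand square follows by invoking that result, and pasting the two squares yields the diagram claimed in the lemma.
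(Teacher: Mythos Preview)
Your proposal is correct and matches the paper's approach exactly: the paper does not prove this lemma at all but simply records it as a direct consequence of Porter's naturality result \cite[1.2]{P1}, which is precisely what you invoke after setting up the strictly commutative right-hand square. Your write-up merely supplies a bit more detail about why the right square commutes and why Porter's equivalence is coordinate-wise natural, but the content is the same citation.
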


Let $FW(S^{2})$ and $FW(\cpinf)$ be the fat wedges of
$\prod_{i=1}^{n} S^{2}$ and $\prod_{i=1}^{n}\cpinf$ respectively. Let
\(FW(\imath)\colon\namedright{FW(S^{2})}{}{FW(\cpinf)}\)
be the map induced by the inclusion
\(\namedright{S^{2}}{\imath}{\cpinf}\).

\begin{lemma}
   \label{djkBubenik}
   There is a commutative diagram of algebras
   \[\diagram
         \qhlgy{\Omega FW(S^{2})}\rto^-{\cong}\dto^{(\Omega FW(\imath))_{\ast}}
            & U(L_{ds}\langle b_{1}\ldots,b_{n}\rangle\coprod L\langle u\rangle)
                \dto^{q} \\
         \qhlgy{\Omega FW(\cpinf)}\rto^-{\cong}
            & U(L_{ds}\langle b_{1},\ldots,b_{n}\rangle\coprod L\langle u\rangle)/I
     \enddiagram\]
   where $u$, of degree $2n-2$, is the Hurewicz image of the adjoint of
   a higher Whitehead product, $I$ is the ideal
   $(b_{i}^{2},[u,b_{i}]\mid 1\leq i\leq n)$, and $q$ is the quotient map.
\end{lemma}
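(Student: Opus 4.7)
The top row of the diagram is an immediate specialization of Theorem~\ref{Bubenik} to $m_i=1$ for every $i$: then $S^{m_i+1}=S^2$ and the asserted isomorphism holds with $u$ of degree $2n-2$, realised as the Hurewicz image of the adjoint of the universal higher Whitehead product $\phi_n\colon S^{2n-1}\to FW(S^2)$. The substance of the lemma is therefore the computation of $\qhlgy{\Omega FW(\cpinf)}$ together with the identification of $(\Omega FW(\imath))_\ast$ as the quotient map $q$.

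My first move would be to pin down the additive shape of $\qhlgy{\Omega FW(\cpinf)}$ via Lemma~\ref{natFWinverse} and Porter's identification of the fibre. Since $\Omega\cpinf\simeq S^1$, the fibre of $FW(\cpinf)\to\prod\cpinf$ is $\Sigma^{n-1}(\Omega\cpinf)^{\wedge n}\simeq S^{2n-1}$, and the section supplied by Lemma~\ref{natFWinverse} produces a homotopy equivalence
\[
\Omega FW(\cpinf)\simeq\Bigl(\prod_{i=1}^{n}\Omega\cpinf\Bigr)\times\Omega S^{2n-1},
\]
so that rationally $\qhlgy{\Omega FW(\cpinf)}\cong\Lambda(b_1,\ldots,b_n)\otimes T(u)$ additively, with $|b_i|=1$ and $|u|=2n-2$. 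This fixes the graded dimensions exactly and pins down the degree of $u$.

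To promote this to the algebra identification together with the identification of the vertical map, I would extend the Allday model $AH(FW(S^2))=(UL\langle V;d_V\rangle,\theta_{FW})$ of Theorem~\ref{Allday} to a model of $FW(\cpinf)$. Take $AH(\cpinf)$ to be $T(b, y_1, y_2, \ldots)$ with $|b|=1$ and $|y_k|=2k+1$, with differentials chosen as in~\cite{AH} so that $\hlgy{AH(\cpinf)}\cong\Lambda(b)$; in particular $dy_1=b^2$. Using the product- and colimit-compatibility of Adams--Hilton models recalled in Section~\ref{sec:AdamsHilton}, together with the inductive construction of the fat-wedge model underlying Theorem~\ref{Allday}, this extends to a model $AH(FW(\cpinf))$ containing $AH(FW(S^2))$ as a differential graded subalgebra, with the inclusion realising $AH(FW(\imath))$. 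Passing to homology, the new generators $y_1$ attached in each coordinate force $b_i^2=0$; Allday's cycle $a$ continues to represent $u$; and after $b_i^2$ is killed, the brackets $[a,b_i]$ become boundaries of explicit combinations of the newly introduced generators paired with the monomials entering $a$, giving the relations $[u,b_i]=0$. A separated Lie model computation in the spirit of Bubenik~\cite{B}, or equivalently a direct comparison of graded dimensions against the splitting above, then shows that $U(L_{ab}\langle b_1,\ldots,b_n\rangle\coprod L\langle u\rangle)/I$ already matches, so no further relations are needed; commutativity with the quotient map $q$ is automatic from the model-level inclusion.

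The main obstacle will be producing the explicit chain-level primitives witnessing $[u,b_i]=0$ in the extended Adams--Hilton model, and verifying that the interaction of Allday's cycle $a$ with the new generators of $AH(\cpinf)$ produces no additional relations. This amounts to a differential graded algebra calculation of the same flavour as Bubenik's separated Lie argument, but allowing for the enlarged generator set coming from the higher cells of $\cpinf$.
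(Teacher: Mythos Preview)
Your outline is correct and would work, but it follows a genuinely different path from the paper, and the step you flag as the main obstacle is precisely the one the paper's argument avoids. The paper never extends the Adams--Hilton model to $FW(\cpinf)$. Instead it loops the naturality square of fat-wedge fibrations induced by $\imath$ (Lemma~\ref{FWfibrenat}) and combines it with the natural section $m$ of $\Omega j$ (Lemma~\ref{natFWinverse}). The relation $(\Omega FW(\imath))_\ast(b_i^2)=0$ drops out because a degree argument gives $m_\ast(c_i)=b_i$ (and even $m_\ast(c_i^2)=b_i^2$), while $(\Omega\imath)_\ast(c_i^2)=0$ in $\qhlgy{\prod S^1}$. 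The relation $(\Omega FW(\imath))_\ast([u,b_i])=0$ is obtained by noting that $[u,b_i]$ lifts to an element $\delta_i$ in the homology of the looped fibre $\Omega(\Sigma^{n-1}(\Omega S^2)^{(n)})$; the induced map of fibres $\Omega t$ lands in $\qhlgy{\Omega S^{2n-1}}$, which vanishes in the degree of $\delta_i$, so commutativity of the left square forces the image to vanish. With $I$ in the kernel, the factored map $h$ is then shown to be an isomorphism by exactly the splitting you describe: both source and target are identified with $\Lambda(b_1,\ldots,b_n)\otimes\mathbb{Q}[u]$.

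So the two arguments share the product-splitting endgame, but the paper's naturality-plus-degree trick dispatches $[u,b_i]=0$ in one line, whereas your chain-level route requires locating an explicit bounding element in $AH(FW(\cpinf))$---a model that contains generators not only for the higher cells $y_k$ in each coordinate but for all mixed product cells of the fat wedge. That computation is feasible (one expects the relevant primitive to involve brackets of $y_{1,i}$ with the generators $b_J$ entering Allday's formula for $a$), but it is exactly the labour the paper's topological argument is designed to bypass.
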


\begin{proof}
The isomorphism for $\qhlgy{\Omega FW(S^{2})}$ holds by
Theorem~\ref{Bubenik}. To obtain the compatible isomorphism for
$\qhlgy{\Omega FW(\cpinf)}$ we first consider what happens on the
level of spaces. For a space $X$, let $X^{(n)}$ be the $n$-fold smash 
product of $X$ with itself. By Lemma~\ref{FWfibrenat}, the map $\imath$ 
induces a homotopy commutative diagram
\[\diagram
     \Sigma^{n-1}(\Omega S^{2})^{(n)}\rto\dto^{\Sigma^{n-1}(\Omega\imath)^{(n)}}
        & FW(S^{2})\rto^-{j}\dto^{FW(\imath)}
        & \prod_{i=1}^{n} S^{2}\dto^{\prod_{i=1}^{n}\imath} \\
     \Sigma^{n-1}(\Omega\cpinf)^{(n)}\rto
        & FW(\cpinf)\rto^-{j} & \prod_{i=1}^{n}\cpinf.
  \enddiagram\]
Note that $\Omega\cpinf\simeq S^{1}$ so
$\Sigma^{n-1}(\Omega\cpinf)^{(n)}\simeq S^{2n-1}$.
Also, since
\(\namedright{S^{1}}{E}{\Omega S^{2}}\)
is a right homotopy inverse for $\Omega\imath$, if we let
$s=\Sigma^{n-1} E^{(n)}$ and $t=\Sigma^{n-1}(\Omega i)^{(n)}$,
then the composite
\(\nameddright{S^{2n-1}}{s}{\Sigma^{n-1}(\Omega S^{2})^{(n)}}
    {t}{S^{2n-1}}\)
is homotopic to the identity map.

After looping we obtain a homotopy commutative diagram
\begin{equation}
  \label{FWdgrm}
  \diagram
     \Omega(\Sigma^{n-1}(\Omega S^{2})^{(n)})\rto\dto^{\Omega t}
        & \Omega FW(S^{2})\rto^-{\Omega j}\dto^{\Omega FW(\imath)}
        & \prod_{i=1}^{n} \Omega S^{2}\dto^{\prod_{i=1}^{n}\Omega\imath} \\
     \Omega S^{2n-1}\rto & \Omega FW(\cpinf)\rto^-{\Omega j}
        & \prod_{i=1}^{n} S^{1}.
  \enddiagram
\end{equation}
By Lemma~\ref{natFWinverse}, $\Omega j$ has a natural right homotopy
inverse, so there is a homotopy commutative diagram of sections
\begin{equation}
  \label{natsection}
  \diagram
     \prod_{i=1}^{n}\Omega S^{2}\rto^-{m}\dto^{\prod_{i=1}^{n}\Omega\imath}
         & \Omega FW(S^{2})\dto^{\Omega FW(\imath)} \\
     \prod_{i=1}^{n} S^{1}\rto^-{m} & \Omega FW(\cpinf).
  \enddiagram
\end{equation}

Now we examine the effect of~(\ref{natsection}) in homology.
By Theorem~\ref{ULcolim2} and Proposition~\ref{hlgyloopzks}, a
model for the homology of the homotopy fibration along the top
row of~(\ref{FWdgrm}) is
\begin{equation}
  \label{Bubmodel}
  \nameddright{UL\langle R\rangle}{U(i)}
    {U(L_{ds}\langle b_{1},\ldots,b_{n}\rangle\textstyle\coprod L\langle u\rangle)}
    {U(\pi)}{UL_{ds}\langle b_{1},\ldots,b_{n}\rangle} 
\end{equation}
where $R=\{[[u,b_{j_{1}}],\ldots, b_{j_{l}}]\mid
     1\leq j_{1}\leq\cdots\leq j_{l}\leq n, 0\leq l<\infty\}$.
From~(\ref{natsection}), we obtain a right inverse $m_{\ast}$ of $U(\pi)$.
In particular, if
$\qhlgy{\prod_{i=1}^{n}\Omega S^{2}}\cong\mathbb{Q}[c_{1},\ldots,c_{n}]$,
then $m_{\ast}(c_{i})=b_{i}+\gamma_{i}$ for some $\gamma_{i}\in UL\langle R\rangle$.
However, the least degree of $UL\langle R\rangle$ which is nontrivial
is $2n-2$, while $c_{i}$ has degree $1$. As $n\geq 3$, for degree reasons
we must have $\gamma_{i}=0$. Thus $m_{\ast}(c_{i})=b_{i}$. For similar
degree reasons, we have $m_{\ast}(c_{i}^{2})=b_{i}^{2}$ (even though $m_{\ast}$
may not be multiplicative). On the other hand, $(\Omega\imath)_{\ast}$
is an isomorphism on the first homology group and the same is true after
composing with $m_{\ast}$, while $(\Omega\imath)_{\ast}(c_{i}^{2})=0$.
Thus the commutativity of~(\ref{natsection}) after taking homology implies
that for $1\leq i\leq n$, $(\Omega FW(\imath))_{\ast}$ is degree one
on $b_{i}$ while $(\Omega FW(\imath))_{\ast}(b_{i}^{2})=0$.
The latter implies by multiplicativity that $(\Omega FW(\imath))_{\ast}$
sends the ideal $(b_{1}^{2},\ldots,b_{n}^{2})$ to $0$.

Next, consider the commutator
$[u,b_{i}]\in\qhlgy{\Omega FW(S^{2})}$. In terms
of~(\ref{Bubmodel}), $[u,b_{i}]$ composes trivially with $U(\pi)$
and so is the image of an element $\delta_{i}\in UL\langle R\rangle$.
Note that $\delta_{i}$ has degree $2n-1$. Taking homology in~(\ref{FWdgrm}),
we see that $(\Omega t)_{\ast}(\delta_{i})=0$ for degree reasons.
Thus the commutativity of the left square in~(\ref{FWdgrm}) implies
that $(\Omega FW(\imath))_{\ast}([u,b_{i}])=0$. By multiplicativity,
$(\Omega FW(\imath))_{\ast}$ therefore sends the ideal
$I=(b_{i}^{2},[u,b_{i}]\mid 1\leq i\leq n)$ to $0$.

Thus there is a factorization
\begin{equation}
  \label{factordgrm}
  \diagram
       \qhlgy{\Omega FW(S^{2})}\rto^-{\cong}\dto^{(\Omega FW(\imath))_{\ast}}
         & U(L_{ds}\langle b_{1}\ldots,b_{n}\rangle\coprod L\langle u\rangle)
             \dto^{q} \\
      \qhlgy{\Omega FW(\cpinf)}
         & U(L_{ds}\langle b_{1},\ldots,b_{n}\rangle\coprod L\langle u\rangle)/I
             \lto^-{h}
  \enddiagram
\end{equation}
for some algebra map $h$, which is degree one on $b_{i}$ for each
$1\leq i\leq n$. In addition, the fact that $\Omega t$ has a right
homotopy inverse implies that $h$ is degree one on $u$.

We claim that $h$ is an isomorphism, from which the lemma would follow.
To see the isomorphism, let
$I^{\prime}$ be the ideal $([u,b_{i}]\mid 1\leq i\leq n)$.
Observe that
$U(L_{ds}\langle b_{1},\ldots,b_{n}\rangle\coprod L\langle u\rangle)/I^{\prime}$
is isomorphic to
$UL_{ds}\langle b_{1},\ldots,b_{n},u\rangle\cong\mathbb{Q}[b_{1},\ldots b_{n},u]$.
Thus
$U(L_{ds}\langle b_{1},\ldots,b_{n}\rangle\coprod L\langle u\rangle)/I$ is
isomorphic to
$\Lambda(b_{1},\ldots,b_{n})\otimes\mathbb{Q}[u]$. On the other hand,
the section $m$ in~(\ref{natsection}) implies that there is a homotopy
decomposition
$\Omega FW(\cpinf)\simeq(\prod_{i=1}^{n} S^{1})\times\Omega S^{2n-1}$.
Thus there is a coalgebra isomorphism
$\qhlgy{\Omega FW(\cpinf)}\cong\Lambda(c_{1},\ldots,c_{n})\otimes\mathbb{Q}[v]$
where $v$ has degree $2n-2$. From the use of $m$ and $t$ in both the homotopy
decomposition of $\Omega FW(\cpinf)$ and the factorization of
$(\Omega FW(\imath))_{\ast}$ through $h$, we see that
$h(b_{i})=c_{i}$ for $1\leq i\leq n$ and $h(u)=v$. As~$h$ is an
algebra map, it therefore induces an isomorphism.
\end{proof}

Now we pass to a colimit of fat wedges to prove Theorem~\ref{hlgyloopdjk}, 
restated below to match the simplifying condition in Remark~\ref{simplifyremark}. 

\begin{theorem}
   \label{hlgyloopdjk2}
   Let $K$ be a directed $MF$-complex such that $\vert\sigma\vert>1$ 
   for every $\sigma\in MF(K)$. There is an algebra isomorphism
   \[\qhlgy{\Omega\djk}\cong
        U(L_{ds}\langle b_{1},\ldots,b_{n}\rangle\textstyle\coprod
        L\langle u_{\sigma}\mid\sigma\in MF(K)\rangle)/I\]
   where $u_{\sigma}$ is the Hurewicz image of the adjoint of a
   higher Whitehead product and $I$ is the ideal
   \[I=(b_{i}^{2},[u_{\sigma},b_{j_{\sigma}}]\mid
       1\leq i\leq n,\sigma=(i_{1},\ldots,i_{k})\in MF(K),
       j_{\sigma}\in\{i_{1},\ldots,i_{k}\}).\]
   Further, there is a commutative diagram of algebras
   \[\diagram
        \qhlgy{\Omega\djkstwo}\rto^-{\cong}\dto^{(\Omega\djk(\imath))_{\ast}}
            & U(L_{ds}\langle b_{1},\ldots,b_{n}\rangle\textstyle\coprod
                L\langle u_{\sigma}\mid\sigma\in MF(K)\rangle)\dto^{q} \\
        \qhlgy{\Omega\djk}\rto^-{\cong}
            & U(L_{ds}\langle b_{1},\ldots,b_{n}\rangle\textstyle\coprod
                L\langle u_{\sigma}\mid\sigma\in MF(K)\rangle)/I
     \enddiagram\]
   where $q$ is the quotient map.
\end{theorem}

\begin{proof} 
With $\sigma=(i_{1},\ldots,i_{k})$, consider the diagram
\[\spreaddiagramcolumns{-1pc}\diagram
      \qhlgy{\Omega\djkstwo}\rrto^-{(\Omega\djk(\imath))_{\ast}}\dto^{\cong}
         & & \qhlgy{\Omega\djk}\dto^-{\cong} \\
      \colim_{\sigma\in MF(K)}
             U(L_{ds}\langle b_{i_{1}},\ldots,b_{i_{k}}\rangle\coprod L\langle u_{\sigma}\rangle)
             \rrto^-{Q}\dto^{\cong}
         & & \colim_{\sigma\in MF(K)}
              U(L_{ds}\langle b_{i_{1}},\ldots,b_{i_{k}}\rangle\coprod
                L\langle u_{\sigma}\rangle)/I_{\sigma}\dto^-{\cong} \\
      U(L_{ds}\langle b_{1},\ldots,b_{n}\rangle\coprod
                L\langle u_{\sigma}\mid\sigma\in MF(K)\rangle)\rrto^-{q}
          & & U(L_{ds}\langle b_{i_{1}},\ldots,b_{i_{k}}\rangle\coprod
                L\langle u_{\sigma}\mid\sigma\in MF(K)\rangle)/I
  \enddiagram\]
where $Q=\colim_{\sigma\in MF(K)} q_{\sigma}$ and $I_{\sigma}$ is the ideal 
generated by $(b_{\sigma_{i}}^{2},[u_{\sigma},b_{j_{\sigma}}]\mid
    j_{\sigma}\in\{i_{1},\ldots,i_{k}\})$.
The upper square commutes by combining Proposition~\ref{MdjkAH} and
Lemma~\ref{djkBubenik}. The lower square is the result of evaluating
the colimit, and so commutes. Note that both squares commute as maps
of algebras. The lower row is the isomorphism asserted by the
theorem, and the outer rectangle is the asserted commutative
diagram. 
\end{proof} 

Next, we use Theorem~\ref{hlgyloopdjk2} to calculate $\qhlgy{\Omega\zk}$
in Proposition~\ref{hlgyloopzk} as the universal enveloping algebra of
a certain free graded Lie algebra. This will involve some explicit
calculations involving graded Lie algebra identities, which we recall
now. In general, if $L$ is a graded Lie algebra over $\mathbb{Q}$
with bracket $[\ ,\ ]$, there is a graded anti-symmetry identity
$[x,y]=-(-1)^{\vert x\vert\vert y\vert}[y,x]$
for all $x,y\in L$ and a graded Jacobi identity
$[[x,y],z]=[x,[y,z]]-(-1)^{\vert x\vert\vert y\vert}[y,[x,z]]$
for all $x,y,z\in L$.

The ideal in Theorem~\ref{hlgyloopdjk2} involves brackets of the
form $[u_{\sigma},b_{j}]$ where $j\in\{i_{1},\ldots,i_{k}\}$, where
$\sigma=(i_{1},\ldots,i_{k})$. Thus in the quotient we need to keep
track of brackets of the form $[u_{\sigma},b_{j}]$ where~$j$ is in
the complement of $\{i_{1},\ldots,i_{k}\}$. Let
$J_{\sigma}=\{1,\ldots,n\}-\{i_{1},\ldots,i_{k}\}$.
Consider the free graded Lie algebra generated by
\[\widetilde{R}=\{[[u_{\sigma},b_{j_{1}}],\ldots, b_{j_{l}}]\mid\sigma\in MF(K),
     \{j_{1},\ldots,j_{l}\}\subseteq J_{\sigma}, 1\leq j_{1}<\cdots< j_{l}\leq n,
     0\leq l\leq n\}.\]
Note that each $j_{t}$ can appear at most once in any given bracket. This should
be compared to the $\mathbb{Q}$-module $R$, where each $j_{t}$ can appear
arbitrarily many times in a given bracket. Let
\(i_{R}\colon\namedright{\widetilde{R}}{}{R}\)
be the inclusion and
\(\pi_{R}\colon\namedright{R}{}{\widetilde{R}}\)
be the projection.

\begin{lemma}
   \label{zkLie}
   There is a short exact sequence of Lie algebras
   \[\nameddright{L\langle\widetilde{R}\rangle}{\tilde{i}}
       {(L_{ds}\langle b_{1},\ldots,b_{n}\rangle\textstyle\coprod
        L\langle u_{\sigma}\mid\sigma\in MF(K)\rangle)/\widetilde{I}}
       {\widetilde{\pi}}{L_{ds}\langle b_{1},\ldots,b_{n}\rangle/\widetilde{I}^{\prime}}\]
   where $\widetilde{I}$ is the ideal
   $\widetilde{I}=([b_{i},b_{i}],[u_{\sigma},b_{j_{\sigma}}]\mid
       1\leq i\leq n,\sigma\in MF(K), j_{\sigma}\in\{i_{1},\ldots,i_{k}\})$,
   $\widetilde{I}^{\prime}$ is the ideal $([b_{i},b_{i}]\mid 1\leq i\leq n)$,
   $\tilde{i}$ is the inclusion, and $\widetilde{\pi}$ is the projection.
\end{lemma}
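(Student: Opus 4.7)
The plan is to obtain the desired short exact sequence by descending the already-established short exact sequence~(\ref{SESLie})
\[
\nameddright{L\langle R\rangle}{i}
    {L_{ab}\langle b_{1},\ldots,b_{n}\rangle\textstyle\coprod
     L\langle u_{\sigma}\mid\sigma\in MF(K)\rangle}
    {\pi}{L_{ab}\langle b_{1},\ldots,b_{n}\rangle}
\]
to the quotient by $\widetilde{I}$. First I would observe that the generators of $\widetilde{I}$ split into two types under~$\pi$: the elements $[b_i,b_i]$ project to generators of $\widetilde{I}^\prime$, while the elements $[u_\sigma,b_{j_\sigma}]$ (with $j_\sigma\in\sigma$) project to zero because $\pi$ kills every $u_\sigma$. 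Consequently $\pi$ descends to a well-defined surjection $\widetilde{\pi}$ onto $L_{ab}\langle b_1,\ldots,b_n\rangle/\widetilde{I}^\prime$, and the kernel of $\widetilde{\pi}$ is exactly the image of $L\langle R\rangle$ in the quotient.

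The next task is to identify this kernel with $L\langle\widetilde{R}\rangle$. For this I would use the graded Jacobi identity together with the crucial fact that $b_i$ and $b_j$ live in the abelian Lie algebra factor, so $[b_i,b_j]=0$ inside the coproduct. Thus for any $x$ in the middle Lie algebra and any $b_{j_1},b_{j_2}$,
\[
[[x,b_{j_1}],b_{j_2}] = [x,[b_{j_1},b_{j_2}]] - (-1)^{|x||b_{j_1}|}[b_{j_1},[x,b_{j_2}]]
   = -(-1)^{|x||b_{j_1}|}[b_{j_1},[x,b_{j_2}]],
\]
and a further application of anti-symmetry produces $[[x,b_{j_2}],b_{j_1}]$ up to sign. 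Iterating this, every iterated bracket $[[u_\sigma,b_{j_1}],\ldots,b_{j_l}]$ can be re-expressed, modulo $\widetilde{I}$, as a scalar multiple of a bracket whose $b_j$ indices are strictly increasing and avoid $\sigma$. A repeated index then forces the bracket to vanish via $[b_j,b_j]\in\widetilde{I}$, and a single entry $b_{j_\sigma}$ with $j_\sigma\in\sigma$ can be commuted to the innermost position and killed by $[u_\sigma,b_{j_\sigma}]\in\widetilde{I}$. This shows that the induced map $\tilde{i}\colon L\langle\widetilde{R}\rangle\to(L_{ab}\coprod L)/\widetilde{I}$ is well defined and surjects onto $\ker\widetilde{\pi}$.

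To finish, one needs that $\tilde{i}$ is injective, i.e.\ that the elements of $\widetilde{R}$ remain linearly independent and freely generate a Lie subalgebra after quotienting. The cleanest way I would do this is dimension counting at the universal enveloping algebra level: apply $U$ to the putative sequence, use that $U$ of a coproduct of free Lie algebras has a PBW basis indexed by Hall words, and compare Poincar\'e series with the right-hand side $UL_{ab}\langle b_1,\ldots,b_n\rangle/\widetilde{I}^\prime\cong\Lambda(b_1,\ldots,b_n)$ tensored with $UL\langle\widetilde{R}\rangle$. The Hilbert series of the quotient $(L_{ab}\coprod L\langle u_\sigma\rangle)/\widetilde{I}$, computed by listing a monomial basis in which every $b_j$ with $j\in\sigma$ is absorbed into relations, matches the expected product, forcing $\tilde{i}$ to be injective.

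The main obstacle is precisely this last step: verifying that the relations in $\widetilde{I}$, together with the Jacobi identity, do not create additional unexpected relations among brackets of elements of $\widetilde{R}$. I expect this to follow cleanly from the PBW/Hopf algebra short exact sequence argument (as in~\cite[3.7]{CMN}), but it is the only place where care is required, since one must distinguish between relations that merely shuffle indices and relations that could collapse independent brackets of $u_\sigma$'s among themselves.
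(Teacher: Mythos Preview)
Your approach matches the paper's: descend~(\ref{SESLie}) to the quotient by $\widetilde{I}$, and use the graded Jacobi identity together with $[b_i,b_j]=0$ (recorded in the paper as ``Calculation~1'' and ``Calculation~2'') to reduce every generator of $R$, modulo $\widetilde{I}$, to a signed element of $\widetilde{R}$ or to zero. The only divergence is at the end: the paper simply asserts that the resulting image ``is $L\langle\widetilde{R}\rangle$'' without separately verifying freeness on $\widetilde{R}$, whereas you correctly isolate this as the one point needing care and propose a Poincar\'e-series/PBW comparison---so on that step you are actually more scrupulous than the paper itself.
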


\begin{proof}
To simplify notation, let
$L=L_{ds}\langle b_{1},\ldots,b_{n}\rangle\textstyle\coprod
        L\langle u_{\sigma}\mid\sigma\in MF(K)\rangle$.
Observe from the definitions of $\widetilde{I}$ and $\widetilde{I}^{\prime}$
that there is a commutative diagram
\[\diagram
       L\rto^-{\pi}\dto^{q}
          & L_{ds}\langle b_{1},\ldots,b_{n}\rangle\dto^{q^{\prime}} \\
       L/\widetilde{I}\rto^-{\widetilde{\pi}}
          & L_{ds}\langle b_{1},\ldots,b_{n}\rangle/\widetilde{I}^{\prime}
  \enddiagram\]
where $q$ and $q^{\prime}$ are the quotient maps. By~(\ref{SESLie}), the
kernel of $\pi$ is $L\langle R\rangle$. Let $\widetilde{L}$ be the kernel
of $\widetilde{\pi}$. The commutativity of the diagram implies that there
is an induced map
\(\tilde{q}\colon\namedright{L\langle R\rangle}{}{\widetilde{L}}\).

We claim that $\tilde{q}$ is a surjection. Let $x\in\widetilde{L}$ and
let $x$ also denote its image in $L/\widetilde{I}$. As $q$ is onto
there is an element $y\in L$ such that $q(y)=x$. Let $z=\pi(y)$.
If $z=0$ then by exactness $y$ lifts to $\tilde{y}\in L\langle R\rangle$
and so $\tilde{q}(\tilde{y})=x$. If $z\neq 0$, then $q^{\prime}(z)=0$
by exactness. Since $L$ is a coproduct, the projection $\pi$ has a
right inverse
\(r\colon\namedright{L_{ds}\langle b_{1},\ldots,b_{n}\rangle}{}{L}\)
which is a map of Lie algebras. As the generators of the ideal
$\widetilde{I}^{\prime}$ are all generators of the ideal $\widetilde{I}$,
we have $(q\circ r)(m)=0$ if and only if $q^{\prime}(m)=0$ for any
$m\in L_{ds}\langle b_{1},\ldots,b_{n}\rangle$. Thus $r(z)$ has
the property that $(q\circ r)(z)=0$. Therefore $\tilde{y}=y-r(z)$
lifts to $L\langle R\rangle$ and $q(y-r(z))=q(y)=x$, so
$\tilde{q}(\tilde{y})=x$. Hence $\tilde{q}$ is a surjection.

Now $\tilde{q}$ is a surjection and $\widetilde{L}$ injects into
$L/\widetilde{I}$. Therefore $\widetilde{L}$ is isomorphic to
the image of $L\langle R\rangle$ under $q$. We next show that
this image is $L\langle\widetilde{R}\rangle$. We first perform
two short calculations.

\noindent\textit{Calculation 1}:
The Jacobi identity states that
$[[a,b_{i}],b_{j}]=[a,[b_{i},b_{j}]]-
   (-1)^{\vert a\vert\vert b_{i}\vert}[b_{i},[a,b_{j}]]$
for any $a\in L$ and any $1\leq i,j\leq n$. The abelian property of
$L_{ds}\langle b_{1},\ldots,b_{n}\rangle$ implies that $[b_{i},b_{j}]=0$
and so $[a,[b_{i},b_{j}]]=0$. By the anti-symmetry identity,
$-(-1)^{\vert a\vert\vert b_{i}\vert}[b_{i},[a,b_{j}]]=
    (-1)^{\vert a\vert\vert b_{i}\vert+\vert b_{i}\vert\vert[a,b_{j}]\vert}
    [[a,b_{j}],b_{i}]$.
Since $\vert b_{i}\vert=1$ for $1\leq i\leq n$, the sign on the right
side of this equation equals $(-1)^{2\vert a\vert+1}$, which is~$-1$.
Therefore $[[a,b_{i}],b_{j}]=-[[a,b_{j}],b_{i}]$.

\noindent\textit{Calculation 2}:
The Jacobi identity states that
$[[a,b_{i}],b_{i}]=[a,[b_{i},b_{i}]]
     -(-1)^{\vert a\vert\vert b_{i}\vert}[b_{i},[a,b_{i}]]$
for any $a\in L$ and $1\leq i\leq n$. Since $[b_{i},b_{i}]=0$
in $L$, we have $[a,[b_{i},b_{i}]]=0$. As in Calculation 1,
the anti-symmetry identity shows that
$-(-1)^{\vert u_{\sigma}\vert}[b_{i},[a,b_{i}]]=-[[a,b_{i}],b_{i}]$.
Thus $[[a,b_{i}],b_{i}]=-[[a,b_{i}],b_{i}]$,
and so $2[[a,b_{i}],b_{i}]=0$. As $L$ is a Lie algebra over
$\mathbb{Q}$, $2$ is invertible and so $[[a,b_{i}],b_{i}]=0$.

By Calculation 1, up to sign change, whenever consecutive
$b$'s appear in a bracket of $L\langle R\rangle$ or $L$ their order
can be interchanged. By Calculation 2, the effect of
taking the quotient in $L\langle R\rangle$ and $L$ by the ideal
$I^{\prime}=([b_{i},b_{i}]\mid 1\leq i\leq n)$
is to annihilate all brackets in which appears a copy of $[a,[b_{i},b_{i}]]$.
Together with Calculation~1 which lets us freely interchange
consecutive $b$'s, any bracket of
the form $[[u_{\sigma},b_{j_{1}}],\ldots, b_{j_{l}}]$ is zero if
any $b_{j_{t}}$ appears more than once. Thus, the only such nontrivial
brackets must have  $1\leq j_{1}<\cdots< j_{l}\leq n$, $0\leq l<n$, as
in the definition of $\widetilde{R}$. The effect of then taking the
quotient by the ideal generated by $[u_{\sigma},b_{j_{\sigma}}]$
for $j_{\sigma}\in\{i_{1},\ldots,i_{k}\}$ is to annihilate those brackets in
$\{[[u_{\sigma},b_{j_{1}}],\ldots, b_{j_{l}}]\mid\sigma\in MF(K),
    1\leq j_{1}<\cdots< j_{l}\leq n, 0\leq l<n\}$
which do not have $j_{1},\ldots,j_{l}\in J_{\sigma}$. Thus the
image of $L\langle R\rangle$ under $q$ is $L\langle\widetilde{R}\rangle$.
\end{proof}

In general, the image of a graded Lie algebra $L$ in its universal
enveloping algebra $UL$ has the property that
$[x,y]=xy-(-1)^{\vert x\vert\vert y\vert}yx$,
where the multiplication is taking place in $UL$. In particular,
the anti-symmetry identity implies that $[x,x]=2x^{2}$ if the
degree of $x$ is odd. Thus if $2$ has been inverted in the ground ring,
then the ideal in $UL$ generated by $[x,x]$ is identical to the
ideal generated by $x^{2}$. In our case, the short exact sequence
of Lie algebras in Lemma~\ref{zkLie} implies that there is a short
exact sequence of Hopf algebras
\begin{equation}
  \label{SESHopf}
  \nameddright{L\langle\widetilde{R}\rangle}{}
    {(L_{ds}\langle b_{1},\ldots,b_{n}\rangle\textstyle\coprod
     L\langle u_{\sigma}\mid\sigma\in MF(K)\rangle)/I}
    {\pi}{L_{ds}\langle b_{1},\ldots,b_{n}\rangle/I^{\prime}}
\end{equation}
where $I$ is the ideal in Theorem~\ref{hlgyloopdjk} and
$I^{\prime}=(b_{i}^{2}\mid 1\leq i\leq n)$.

\begin{proposition}
   \label{hlgyloopzk}
   There is a commutative diagram of algebras
   \[\diagram
        \qhlgy{\Omega\zk}\rto\dto^{\cong}
           & \qhlgy{\Omega\djk}\dto^{\cong} \\
        UL\langle\widetilde{R}\rangle\rto^-{U(\tilde{i})}
           & U(L_{ds}\langle b_{1},\ldots,b_{n}\rangle\coprod
             L\langle u_{\sigma}\mid\sigma\in MF(K)\rangle)/I.
     \enddiagram\]
\end{proposition}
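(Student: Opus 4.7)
The plan is to follow the template of the proof of Proposition~\ref{hlgyloopzks}, replacing the algebraic input from Theorem~\ref{ULcolim} and (\ref{SESLie}) with the corresponding ingredients built for $\djk$, namely Theorem~\ref{hlgyloopdjk} and Lemma~\ref{zkLie}, and replacing the topological input about $\zks\to\djks\to\prod S^{m_i+1}$ with the loops on the fibration $\nameddright{\zk}{}{\djk}{}{\prod_{i=1}^{n}\cpinf}$.

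First I would upgrade the short exact sequence of Lie algebras in Lemma~\ref{zkLie} to a short exact sequence of Hopf algebras by applying the universal enveloping algebra functor and invoking~\cite[3.7]{CMN}; this is exactly the passage already recorded in (\ref{SESHopf}), and it identifies $U(\tilde{i})$ with the algebra kernel of $U(\widetilde{\pi})$. In particular, there is a Hopf algebra isomorphism
\[U(L_{ab}\langle b_{1},\ldots,b_{n}\rangle\textstyle\coprod
  L\langle u_{\sigma}\mid\sigma\in MF(K)\rangle)/I\cong
  UL_{ab}\langle b_{1},\ldots,b_{n}\rangle/I^{\prime}\otimes UL\langle\widetilde{R}\rangle\]
of right $UL\langle\widetilde{R}\rangle$-modules and left $UL_{ab}\langle b_{1},\ldots,b_{n}\rangle/I^{\prime}$-comodules.

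Second, I would match the projection $U(\widetilde{\pi})$ with a topological map. By Theorem~\ref{hlgyloopdjk}, under the vertical isomorphisms, $U(\widetilde{\pi})$ is a model for the looped inclusion \(\namedright{\Omega\djk}{\Omega g}{\prod_{i=1}^{n}\Omega\cpinf}\), where the target has rational homology $UL_{ab}\langle b_{1},\ldots,b_{n}\rangle/I^{\prime}\cong\Lambda(b_{1},\ldots,b_{n})$. The map $\Omega g$ admits a right homotopy inverse, constructed exactly as in Lemma~\ref{natFWinverse}: each coordinate inclusion $\cpinf_{i}\hookrightarrow\djk$ loops to a map $\Omega\cpinf_{i}\to\Omega\djk$, and multiplying these together using the loop multiplication on $\Omega\djk$ produces a section of $\Omega g$. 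Consequently $\Omega\djk\simeq\left(\prod_{i=1}^{n}\Omega\cpinf\right)\times\Omega\zk$, yielding a short exact sequence of Hopf algebras
\[\nameddright{\qhlgy{\Omega\zk}}{}{\qhlgy{\Omega\djk}}{U(\widetilde{\pi})}
   {\qhlgy{\textstyle\prod_{i=1}^{n}\Omega\cpinf}}.\]

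Finally, comparing the two short exact sequences produced, $\qhlgy{\Omega\zk}$ is the algebra kernel of $U(\widetilde{\pi})$, which on the algebraic side is $UL\langle\widetilde{R}\rangle$, and the inclusion of this kernel into $\qhlgy{\Omega\djk}$ is modelled by $U(\tilde{i})$. This yields the asserted commutative square and completes the proof. The only subtle point in this program is to be sure that the right homotopy inverse of $\Omega g$ really identifies $\qhlgy{\Omega\zk}$ with the algebra kernel rather than merely a coalgebra or module kernel; this is guaranteed because the section is an $H$-map on each looped coordinate factor, so in homology it splits the surjection $U(\widetilde{\pi})$ both as a coalgebra map and compatibly with the $UL\langle\widetilde{R}\rangle$-module structure, forcing the kernel to coincide with $\qhlgy{\Omega\zk}$ as algebras. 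Every other step is a direct transcription of the argument used for Proposition~\ref{hlgyloopzks}.
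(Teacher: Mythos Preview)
Your proposal is correct and follows essentially the same approach as the paper: the paper's own proof simply says to argue as in Proposition~\ref{hlgyloopzks}, replacing the short exact sequence of Hopf algebras there by~(\ref{SESHopf}) and replacing Theorem~\ref{ULcolim} by Theorem~\ref{hlgyloopdjk}, which is exactly what you have spelled out. The one small caveat is that Theorem~\ref{hlgyloopdjk} as stated does not explicitly assert that $U(\widetilde{\pi})$ models the looped inclusion $\Omega\djk\to\prod_{i=1}^{n}\Omega\cpinf$; this follows by combining the commutative square in Theorem~\ref{hlgyloopdjk} with the ``Further'' clause of Theorem~\ref{ULcolim} and the naturality of the inclusion into the product, but you might make that deduction explicit.
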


\begin{proof}
Argue as in Proposition~\ref{hlgyloopzks}, replacing the short exact
sequence of Hopf algebras appearing there with that
in~\eqref{SESHopf}, and replacing Theorem~\ref{ULcolim2} with
Theorem~\ref{hlgyloopdjk2}.
\end{proof}

Now we use the description of $\qhlgy{\Omega\djk}$ in
Theorem~\ref{hlgyloopdjk2} to produce maps as was done in the case of $\Omega\djks$.
For $\sigma\in MF(K)$, let
\[\tilde{w}_{\sigma}\colon\namedddright{S^{2\vert\sigma\vert+1}}{\phi_{k}}
       {FW(S^{2},\sigma)}{FW(\imath)}{FW(\sigma)}{}{\djk}\]
be the higher Whitehead product. By Theorem~\ref{hlgyloopdjk2}, the element
$u_{\sigma}\in\qhlgy{\Omega\djks}$ is the Hurewicz image of
the adjoint of $\tilde{w}_{\sigma}$. For $1\leq i\leq n$,
let $\tilde{a}_{i}$ be the composite
\[\tilde{a}_{i}\colon\nameddright{S^{2}}{\imath}{\cpinf}{}{\djk}\]
where the right map is the $i^{th}$-coordinate inclusion. Let
$\widetilde{\mathcal{I}}$ be the index set for $\widetilde{R}$. Then
$\widetilde{\alpha}\in\widetilde{\mathcal{I}}$ corresponds to a face
$\sigma\in MF(K)$ and a sequence $(j_{1},\ldots,j_{l})$ where $1\leq
j_{1}<\cdots< j_{l}\leq n$ and $0\leq l\leq n$. Given such an
$\widetilde{\alpha}$, let $t_{\widetilde{\alpha}}=N_{\sigma}+l-2$.
The inclusion $i_{R}$ induces a map
\(\widetilde{i}_{R}\colon\namedright{\bigvee_{\widetilde{\alpha}\in\widetilde{\mathcal{I}}}
    S^{t_{\widetilde{\alpha}}+1}}{}{\bigvee_{\alpha\in\mathcal{I}} S^{t_{\alpha}+1}}\).
Note that $(\Omega\widetilde{i}_{R})_{\ast}$ can be identified with $U(i_{R})$.
Consider the composite
\[\widetilde{W}\colon\namedddright
    {\bigvee_{\widetilde{\alpha}\in\widetilde{\mathcal{I}}}S^{t_{\widetilde{\alpha}}+1}}
    {\widetilde{i}_{R}}{\bigvee_{\alpha\in\mathcal{I}} S^{t_{\alpha}+1}}{W}{\djkstwo}
    {\djk(\imath)}{\djk}.\]
If $\widetilde{\alpha}$ indexes $\sigma\in MF(K)$, then the
restriction of $\widetilde{W}$ to $S^{t_{\widetilde{\alpha}}+1}$ is the
higher Whitehead product~$\widetilde{w}_{\sigma}$. Otherwise, the
restriction of $\widetilde{W}$ to $S^{t_{\widetilde{\alpha}}+1}$ is an
iterated Whitehead product of a single $\widetilde{w}_{\sigma}$ with some
selection of the coordinate inclusions $\tilde{a}_{1},\ldots,\tilde{a}_{n}$,
where each $\tilde{a}_{i}$ appears at most once.

\begin{corollary}
   \label{Hurewiczzk}
   The map
   \(\namedright{\Omega(\bigvee_{\widetilde{\alpha}\in\widetilde{\mathcal{I}}}
        S^{t_{\widetilde{\alpha}}+1})}{\Omega\widetilde{W}}{\Omega\djk}\)
   induces in homology the map
   \(\namedright{UL\langle\widetilde{R}\rangle}{U(\tilde{i})}
        {U(L_{ds}\langle b_{1},\ldots,b_{n}\rangle\coprod
             L\langle u_{\sigma}\mid\sigma\in MF(K)\rangle)/I}\).
\end{corollary}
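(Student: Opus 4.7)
The plan is to decompose $\Omega\widetilde{W}$ along the three-fold factorisation $\widetilde{W}=\djk(\imath)\circ W\circ \widetilde{i}_{R}$ and identify the homology map of each factor using results already proved. Explicitly, $(\Omega\widetilde{W})_{\ast}=(\Omega\djk(\imath))_{\ast}\circ (\Omega W)_{\ast}\circ (\Omega\widetilde{i}_{R})_{\ast}$, and I claim that the three factors are modelled by $U(i_{R})$, $U(i)$, and $q$ respectively, so that the composite is $q\circ U(i)\circ U(i_{R})$.

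First I would identify $(\Omega\widetilde{i}_{R})_{\ast}$ with $U(i_{R})$. By Bott--Samelson, $\qhlgy{\Omega(\bigvee_{\widetilde{\alpha}\in\widetilde{\mathcal{I}}} S^{t_{\widetilde{\alpha}}+1})}$ is the tensor algebra on the reduced rational homology of the wedge, which is isomorphic to $UL\langle\widetilde{R}\rangle$, and similarly $\qhlgy{\Omega(\bigvee_{\alpha\in\mathcal{I}} S^{t_{\alpha}+1})}\cong UL\langle R\rangle$. Since $\widetilde{i}_{R}$ is obtained from the module inclusion $i_{R}\colon\widetilde{R}\hookrightarrow R$ by a wedge of sphere inclusions, naturality of Bott--Samelson with respect to wedge inclusions yields $(\Omega\widetilde{i}_{R})_{\ast}=U(i_{R})$; this is exactly the style of argument used to produce $S_{\ast}$ in the proof of Corollary~\ref{Hurewiczzks}. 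Next, $(\Omega W)_{\ast}=U(i)$ is the content of Corollary~\ref{Hurewiczzks}, while $(\Omega\djk(\imath))_{\ast}=q$ is precisely the right-hand vertical arrow in the commutative diagram of Theorem~\ref{hlgyloopdjk}.

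To finish, I would check that $q\circ U(i)\circ U(i_{R})$ equals $U(\tilde{i})$. Since both are algebra maps with common source $UL\langle\widetilde{R}\rangle$, by multiplicativity it suffices to check agreement on the module generating set $\widetilde{R}$. On a generator $[[u_{\sigma},b_{j_{1}}],\ldots,b_{j_{l}}]\in\widetilde{R}$ with $\{j_{1},\ldots,j_{l}\}\subseteq J_{\sigma}$, the composite $q\circ U(i)\circ U(i_{R})$ applies $i_{R}$ (inclusion), then $i$ (inclusion), then $q$ (projection), sending the element to the class of the same bracket in $U(L_{ab}\langle b_{1},\ldots,b_{n}\rangle\textstyle\coprod L\langle u_{\sigma}\mid\sigma\in MF(K)\rangle)/I$, which by definition of $\tilde{i}$ from Lemma~\ref{zkLie} is $\tilde{i}([[u_{\sigma},b_{j_{1}}],\ldots,b_{j_{l}}])$.

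The step I expect to require the most care is the naturality identification $(\Omega\widetilde{i}_{R})_{\ast}=U(i_{R})$, since the Hurewicz images defining the wedge summands of $W$ and $\widetilde{W}$ are indexed by different sets; however, $\widetilde{\mathcal{I}}\subseteq\mathcal{I}$ by construction and the wedge summands of $\widetilde{W}$ are literally those of $W$ corresponding to $\widetilde{\alpha}\in\widetilde{\mathcal{I}}$ composed with $\djk(\imath)$, so the required naturality reduces to this bookkeeping. Once that is in hand, the corollary follows by pasting together Corollary~\ref{Hurewiczzks}, Theorem~\ref{hlgyloopdjk}, and the generator-wise check above.
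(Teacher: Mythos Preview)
Your proposal is correct and follows essentially the same route as the paper: factor $(\Omega\widetilde{W})_{\ast}$ through $U(i_{R})$, $(\Omega W)_{\ast}=U(i)$ (Corollary~\ref{Hurewiczzks}), and $(\Omega\djk(\imath))_{\ast}=q$ (Theorem~\ref{hlgyloopdjk}), then use multiplicativity and Bott--Samelson to reduce to a check on the generating set $\widetilde{R}$. The paper compresses this by passing through the adjoint $\widetilde{S}=\Omega\widetilde{W}\circ E$ and invoking ``argue as in Corollary~\ref{Hurewiczzks}'', but the substance is identical and your explicit generator-wise verification of $q\circ U(i)\circ U(i_{R})=U(\tilde{i})$ is exactly what that phrase unpacks to.
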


\begin{proof}
Let $\widetilde{S}$ be the composite
\[\widetilde{S}\colon\nameddright
      {\bigvee_{\widetilde{\alpha}\in\widetilde{\mathcal{I}}} S^{t_{\widetilde{\alpha}}+1}}
      {E}{\Omega(\bigvee_{\widetilde{\alpha}\in\widetilde{\mathcal{I}}}
      S^{t_{\widetilde{\alpha}}+1})}{\Omega\widetilde{W}}{\Omega\djk}.\]
The definition of $\widetilde{W}$ implies that $\widetilde{S}_{\ast}$ induces
the composite
\[\widetilde{R}\hookrightarrow\lllnamedddright{UL\langle\widetilde{R}\rangle}
   {UL(i_{R})}{UL\langle R\rangle}{(\Omega W)_{\ast}}{\hlgy{\Omega\djkstwo}}
   {(\Omega\djk(\imath))_{\ast}}{\hlgy{\Omega\djk}}.\]
Now argue as in Corollary~\ref{Hurewiczzks}, using the description
of $(\Omega\djk(\imath))_{\ast}$ in Theorem~\ref{hlgyloopdjk2}, to obtain
the asserted inclusion in homology.
\end{proof}

We finish by bringing \zk\ back into the picture.

\begin{theorem}
   \label{zkdecomp}
   The map
   \(\namedright{\bigvee_{\widetilde{\alpha}\in\mathcal{\widetilde{I}}}
        S^{t_{\widetilde{\alpha}}+1}}{\widetilde{W}}{\djk}\)
   lifts to \zk, and induces a homotopy equivalence
   \(\namedright{\bigvee_{\widetilde{\alpha}\in\mathcal{\widetilde{I}}}
      S^{t_{\widetilde{\alpha}}+1}}{}{\zk}\).
\end{theorem}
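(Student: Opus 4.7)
The plan is to mimic the proof of Theorem~\ref{zksdecomp}, substituting \zk\ for \zks\ and using the algebraic descriptions from Proposition~\ref{hlgyloopzk} and Corollary~\ref{Hurewiczzk} in place of their \zks-analogues. First I would construct the lift. Each wedge summand of $\widetilde{W}$ is either a higher Whitehead product $\widetilde{w}_{\sigma}$ or an iterated Whitehead product of one $\widetilde{w}_{\sigma}$ with coordinate inclusions $\tilde{a}_{j}$. By Lemma~\ref{HWZK} applied to $Y_{i}=S^{1}$ (so that $\Sigma Y_{i}=S^{2}$, composed with the bottom cell inclusion into \cpinf), the higher Whitehead product $\widetilde{w}_{\sigma}$ lifts to \zk. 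Since the coordinate inclusions $\tilde{a}_{j}$ compose trivially with the projection to $\prod_{i=1}^{n}\cpinf$ when bracketed against something already trivial there, each iterated Whitehead product also composes trivially to $\prod_{i=1}^{n}\cpinf$. Thus the full map $\widetilde{W}$ admits a lift $\tilde{\lambda}\colon\namedright{\bigvee_{\widetilde{\alpha}\in\widetilde{\mathcal{I}}}S^{t_{\widetilde{\alpha}}+1}}{}{\zk}$.

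Next I would show $\tilde{\lambda}$ is a rational equivalence. Looping and applying Corollary~\ref{Hurewiczzk}, the composite $\Omega\widetilde{W}$ realizes the algebra map $U(\tilde{i})\colon\namedright{UL\langle\widetilde{R}\rangle}{}{U(L_{ab}\langle b_{1},\ldots,b_{n}\rangle\coprod L\langle u_{\sigma}\mid\sigma\in MF(K)\rangle)/I}$. By Proposition~\ref{hlgyloopzk}, this map factors through $\qhlgy{\Omega\zk}\cong UL\langle\widetilde{R}\rangle$ via $(\Omega\tilde{\lambda})_{\ast}$, which therefore exhibits an inclusion of $UL\langle\widetilde{R}\rangle$ into $UL\langle\widetilde{R}\rangle$. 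Since the source and target have identical finite-dimensional graded pieces, a dimension-count in each degree forces $(\Omega\tilde{\lambda})_{\ast}$ to be an isomorphism. Hence $\Omega\tilde{\lambda}$, and therefore $\tilde{\lambda}$, induces an isomorphism on rational homotopy groups.

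Finally I would upgrade to an integral equivalence. Both source and target are wedges of simply-connected spheres — the source by construction, and \zk\ by Corollary~\ref{integralM}. Consequently both have torsion-free integral homology concentrated in a single degree per summand, so a map between them inducing an isomorphism in rational homology automatically induces an isomorphism in integral homology. Whitehead's theorem then promotes $\tilde{\lambda}$ to an integral homotopy equivalence, completing the proof. $\qqed$

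The main obstacle is the counting step in the middle paragraph: one must verify that $\widetilde{R}$ is actually the correct indexing set so that the dimensions of $UL\langle\widetilde{R}\rangle$ on both sides match degreewise. This is already packaged into Proposition~\ref{hlgyloopzk} and Corollary~\ref{Hurewiczzk}, so in practice the only real work is confirming that the collection of iterated Whitehead products used to define $\widetilde{W}$ is precisely indexed by $\widetilde{\mathcal{I}}$, with no repetitions and no missing elements — a bookkeeping exercise controlled by the Jacobi-identity calculations (Calculations~1 and~2 in the proof of Lemma~\ref{zkLie}) that force $j_{1}<\cdots<j_{l}$ with each $j_{t}\in J_{\sigma}$.
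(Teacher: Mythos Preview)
Your proposal is correct and follows essentially the same approach as the paper: the paper's proof simply says ``Argue as in Theorem~\ref{zksdecomp}, using Proposition~\ref{hlgyloopzk} and Corollary~\ref{Hurewiczzk} in place of Proposition~\ref{hlgyloopzks} and Corollary~\ref{Hurewiczzks} respectively,'' and you have written out exactly that substitution in detail. One minor point: your invocation of Lemma~\ref{HWZK} is slightly imprecise since \cpinf\ is not a suspension, but the fix is immediate---$\widetilde{W}$ is defined as $\djk(\imath)\circ W\circ\widetilde{i}_{R}$, and $W$ already lifts to \zkstwo\ by Theorem~\ref{zksdecomp}, so the naturality square with $\zk(\imath)$ gives the lift to \zk.
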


\begin{proof}
Argue as in Theorem~\ref{zksdecomp}, using Proposition~\ref{hlgyloopzk}
and Corollary~\ref{Hurewiczzk} in place of Proposition~\ref{hlgyloopzks}
and Corollary~\ref{Hurewiczzks} respectively.
\end{proof}

\noindent
\textit{Proof of Theorem~\ref{djkmaps}}:
This is simply a rephrasing of Theorem~\ref{zkdecomp}.
$\qqed$

\section{Examples}
\label{sec:examples}

First, we consider the example of the shifted complex~(\ref{4vexample}) 
which appearedin the Introduction. 

\begin{example}
Let $K$ be the following simplicial complex on $4$ vertices 
\[\begin{tikzpicture} 
   \draw (0,0)--(1,1)--(2,0)--(1,-1)--(0,0); 
   \draw (1,1)--(1,-1); 
   \node [above] at (1,1) {\scriptsize 1}; 
   \node [right] at (2,0) {\scriptsize 3}; 
   \node [below] at (1,-1) {\scriptsize 2}; 
   \node [left] at (0,0) {\scriptsize 4}; 
\end{tikzpicture}\] 
Under this ordering of the vertices, $K$ is shifted. The missing
faces of $K$ are given by $MF(K)=\big\{(3,4),(1,2,3),(1,2,4)\big\}$.
Observe that in this case $|K|=\bigcup_{\sigma\in
MF(K)}|\partial\sigma|$. In fact, 
$K=\partial (1,2,3)\cup\partial (1,2,4)$, where the boundaries 
of the two missing faces have been glued along the common face $(1,2)$. 
So $K$ is a directed $MF$-complex.  

By Theorem~\ref{ULcolim}, there is an algebra isomorphism
\[H_*(\Omega\djk(\underline{S}))\cong U\left(L_{ds}\langle
b_1,b_2,b_3,b_4\rangle\textstyle\coprod L\langle
u_1,u_2,u_3\rangle\right)/J\] 
where $u_{1}$ is the Hurewicz image of the adjoint of the Whitehead 
product corresponding to the missing face $(3,4)$, and $u_{2},u_{3}$ 
are the Hurewicz images of the adjoints of the higher Whitehead products 
corresponding to the missing faces$(1,2,3),(1,2,4)$, respectively. 
The ideal $J$ is determined by Jacobi identities and face relations 
based on the one missing face $(3,4)$ of dimension~$1$. Specifically, 
observe that the Jacobi identity gives 
$[u_{1},b_{1}]=[[b_{3},b_{4}],b_{1}]=[b_{3},[b_{4},b_{1}]]+
    [b_{4},[b_{3},b_{1}]]$.
As both $(1,3)$ and $(1,4)$ are faces of $K$, we have
$[b_{3},b_{1}]=0$ and $[b_{4},b_{1}]=0$. Therefore
$[u_{1},b_{1}]=0$. Similarly, $[u_{1},b_{2}]=0$. Thus
$J=([u_{1},b_{1}],[u_{1},b_{2}])$. 

By Theorem~\ref{djksmaps}, the wedge summands of \zks\ and 
the maps to \djks\ are as follows. For simplicity, we assume that 
each of the spheres in $\underline{S}$ is $S^{2}$. Part~(a) gives the 
three summands $S^{3}$, $S^{5}$ and $S^{5}$ with maps $w_{1}$, 
$w_{2}$ and $w_{3}$ respectively. Since the missing faces $(1,2,3)$ 
and $(1,2,4)$ are of dimension greater than~$1$, part~(b) gives the 
following additional summands and maps:  
\begin{equation} 
  \label{Wlist2} 
  [[w_{2},a_{j_{1}}],\ldots,a_{j_{l}}]\colon\namedright{S^{5+l}}{}{\djks} 
\end{equation}  
\begin{equation} 
  \label{Wlist3} 
  [[w_{3},a_{j_{1}}],\ldots,a_{j_{l}}]\colon\namedright{S^{5+l}}{}{\djks} 
\end{equation}  
for each list $1\leq j_{1}\leq\cdots\leq j_{l}\leq l$ with $1\leq l<\infty$. 
Note that the collection of spheres in~(\ref{Wlist2}) is identical 
to the collection in~(\ref{Wlist3}). In either case, let $W_{2}$ be the wedge 
of spheres. Since the missing face $(3,4)$ is of dimension~$1$, part~(c) 
gives the following additional summands:   
\begin{equation} 
  \label{Wlist1} 
  [[w_{1},a_{j_{1}}],\ldots,a_{j_{l}}]\colon\namedright{S^{3+l}}{}{\djks} 
\end{equation}  
for each list $1\leq j_{1}\leq\cdots\leq j_{l}\leq l$ with $1\leq l<\infty$ 
and each $j_{i}\in\{3,4\}$. Note that the restriction to $j_{i}\in\{3,4\}$ 
is from the fact that the Whitehead products $[w_{1},a_{1}],[w_{1},a_{2}]$ 
correspond to elements in the ideal $J$, so any iterated Whitehead product 
involving $a_{1}$ or $a_{2}$ must be excluded from the list of independent 
Whitehead products $W_{(3,4)}$. Let $W_{1}$ be the wedge of all possible 
spheres in~(\ref{Wlist1}). 

Collectively, we obtain a homotopy equivalence 
\[\zks\simeq S^{3}\vee 2 S^{5}\vee W_{1}\vee 2W_{2}\] 
and a map 
\[\namedright{\zks\simeq S^{3}\vee 2 S^{5}\vee W_{1}\vee 2W_{2}}{}{\djks}\] 
which is the wedge sum of $w_{1}$, $w_{2}$, $w_{3}$ and the 
three lists of iterated Whitehead products in~(\ref{Wlist1}), (\ref{Wlist2}) 
and~(\ref{Wlist3}). 

It is useful to reorganize the wedge summands of \zks. In general, the 
\emph{join} of two spaces $A$ and $B$ is denoted $A\ast B$; for our 
purposes it suffices to know that $A\ast B\simeq\Sigma A\wedge B$. 
The \emph{right half-smash} of $A$ and $B$ is the quotient space 
$A\rtimes B=(A\times B)/(\ast\times B)$. If $A$ is a suspension, then 
$A\rtimes B\simeq A\vee (A\wedge B)$. 
For $1\leq i\leq 4$, let $S^{2}_{i}$ be the copy of $S^{2}$ associated with 
vertex $i$. Observe that the wedge summands of $S^{3}\vee W_{1}$ are 
in one-to-one correspondence with the wedge summands of 
$\Omega S^{2}_{3}\ast\Omega S^{2}_{4}$, where the James splitting is 
used to iteratively decompose this space into a wedge of spheres. As well, 
the wedge summands of $S^{5}\vee W_{2}$ from~(\ref{Wlist2}) are in 
one-to-one correspondence with the wedge summands of 
$(\Omega S^{2}_{1}\ast\Omega S^{2}_{2}\ast\Omega S^{3}_{3})\rtimes\Omega S^{4}$, 
and the wedge summands of $S^{5}\vee W_{2}$ from~(\ref{Wlist3}) are in 
one-to-one correspondence with the wedge summands of 
$(\Omega S^{2}_{1}\ast\Omega S^{2}_{2}\ast\Omega S^{3}_{4})\rtimes\Omega S^{3}$.  
Thus the wedge decomposition of \zks\ above agrees with that 
in~\cite[Section 6]{GT2}. 

Next, by Theorem~\ref{hlgyloopdjk}, there is an algebra isomorphism
\[H_*(\Omega\djk)\cong U\left(L_{ds}\langle
b_1,b_2,b_3,b_4\rangle\textstyle{\coprod} L\langle
u_1,u_2,u_3\rangle\right)/(I+J)\]
where $u_1$ is the Hurewicz image of the adjoint of the
Whitehead product $\widetilde{w}_1\colon S^3\longrightarrow
\CP_3\vee\CP_4\longrightarrow \djk$, while $u_2$ and $u_3$ are the
Hurewicz images of the adjoints of the higher Whitehead products
$\widetilde{w}_2\colon S^5\longrightarrow FW(1,2,3)\longrightarrow\djk$, and
$\widetilde{w}_3\colon S^5\lra FW (1,2,4)\lra\djk$, respectively; $J$ is
as above, and $I=(b_i^2, [u_1,b_3], [u_2,b_4], [u_2,b_1],
[u_2,b_2], [u_2,b_3],[u_3,b_1],[u_3,b_2],[u_3,b_4])$.

By Theorem~\ref{djkmaps}, the wedge summands of \zk\ and the
maps to \djk\ are as follows. Part~(a) gives the three
summands $S^{3}$, $S^{5}$ and $S^{5}$ with maps $\widetilde{w}_{1}$,
$\widetilde{w}_{2}$ and $\widetilde{w}_{3}$ respectively. Since 
the missing faces $(1,2,3)$ and $(1,2,4)$ are of dimension 
greater than~$1$, part~(b) gives two additional summands $S^{6}$ 
and $S^{6}$ from the iterated Whitehead products
\[[\widetilde{w}_2, \tilde{a}_4]\colon S^6\lra\djk \text{ and }
   [\widetilde{w}_3, \tilde{a}_3]\colon S^6\lra\djk.\] 
For the missing face $(3,4)$ of dimension~$1$, part~(c) is 
vacuous in this case. To see this, observe that the Whitehead products
$[\widetilde{w}_{1},\tilde{a}_{3}]$ and $[\widetilde{w}_{1},\tilde{a}_{4}]$ 
correspond to the algebraic elements $[u_{1},a_{3}]$ and $[u_{1},a_{4}]$, 
both of which appear in the ideal $I$, while the Whitehead products 
$[\widetilde{w}_{1},\tilde{a}_{1}]$ and $[\widetilde{w}_{1},a_{2}]$  
correspond to the algebraic elements $[u_{1},b_{1}]$ and $[u_{1},b_{2}]$, 
both of which appear in the ideal $J$. Thus the collection of 
independent Whitehead products $\widetilde{W}_{(3,4)}$ is empty. 

Collectively then, we obtain a homotopy equivalence
$\zk\simeq S^{3}\vee 2S^{5}\vee 2S^{6}$ and a map
\[S^3\vee 2S^5\vee 2S^6 \longrightarrow\djk\]
which is the wedge sum of $\widetilde{w}_{1}$, $\widetilde{w}_{2}$,
$\widetilde{w}_{3}$, $[\widetilde{w}_{2},\tilde{a}_{4}]$ and
$[\widetilde{w}_{3},\tilde{a}_{3}]$. Note that the homotopy
equivalence matches that of~\cite[Example 10.2]{GT1}, which was
calculated by different means.
\end{example}

The next example is similar to the previous one, but boosted up 
one dimension. 

\begin{example} 
Let $K$ be the following simplicial complex on $5$ vertices 
\[\begin{tikzpicture} 
   \draw [fill=gray] (0,0)--(1,2)--(0.7,-1)--(0,0); 
   \draw [fill=gray] (1,2)--(2.5,-0.3)--(0.7,-1); 
   \draw [dashed] (1,2)--(1.5,0.3)--(0.7,-1); 
   \draw [dashed] (0,0)--(1.5,0.3)--(2.5,-0.3); 
   \node [above] at (1,2) {\scriptsize 1}; 
   \node [above right] at (1.5,0.3) {\scriptsize 2}; 
   \node [below] at (0.7,-1) {\scriptsize 3}; 
   \node [left] at (0,0) {\scriptsize 4}; 
   \node [right] at (2.5,-0.3) {\scriptsize 5}; 
\end{tikzpicture}\] 
Here, $K=\partial(1,2,3,4)\cup\partial(1,2,3,5)$ where $\partial(1,2,3,4)$ 
and $\partial(1,2,3,5)$ have been glued along the common face $(1,2,3)$. 
This implies that $K$ is a directed $MF$-complex. Also, under this ordering 
of the vertices, $K$ is shifted. The missing faces of $K$ are given by 
$MF(K)=\{(4,5),(1,2,3,4),(1,2,3,5)\}$. This 
example is analogous to the previous one, so the algebraic descriptions 
of $\hlgy{\Omega\djks}$ and $\hlgy{\Omega\djk}$ are as before, 
but with a dimensional shift to account for the fact that the higher 
Whitehead products corresponding to the missing faces $(1,2,3,4)$ 
and $(1,2,3,5)$ are maps 
\(w_{i}\colon\namedright{S^{7}}{}{\djks}\) 
and 
\(\widetilde{w}_{i}\colon\namedright{S^{7}}{}{\djk}\)  
for $i\in\{2,3\}$. In particular, arguing as before, we obtain a 
homotopy equivalence $\zk\simeq S^{3}\vee 2S^{7}\vee 2S^{8}$ 
and a map 
\[\namedright{S^{3}\vee 2S^{7}\vee 2S^{8}}{}{\djk}\] 
which is the wedge sum of $\widetilde{w}_{1}$, $\widetilde{w}_{2}$,
$\widetilde{w}_{3}$, $[\widetilde{w}_{2},\tilde{a}_{5}]$ and
$[\widetilde{w}_{3},\tilde{a}_{4}]$.
\end{example}

\bibliographystyle{amsalpha}

\end{document}